\newtheorem{thm}{Theorem}[section]
\newtheorem{prop}{Proposition}[section]
\newtheorem{lem}{Lemma}[section]
\newtheorem{defi}{Definition}[section]
\newtheorem{ass}{Assumption}[section]
\newtheorem{rem}{Remark}[section]
\numberwithin{equation}{section}
\def\R{\mathbf{R}}
\def\P{\mathbf{P}}
\def\N{\mathbf{N}}
\def\Z{\textnormal{Z}}
\def\E{\mathrm{E}}
\def\dd{\mathrm{d}}
\def\B{\mathrm{B}}
\def\V{\mathrm{V}}
\def\W{\mathrm{W}}
\def\C{\mathrm{C}}
\def\L{\mathrm{L}}
\def\div{\mathrm{div}}
\def\Ll{\mathrm{L}_{\textnormal{loc}}}
\def\H{\mathrm{H}}
\def\K{\mathrm{K}}
\def\D{\mathrm{D}}
\def\V{\mathrm{V}}
\def\X{\mathrm{X}}
\def\Z{\mathrm{Z}}
\newcommand{\init}{\textnormal{in}}
\def\ep{\varepsilon}
\def\ffi{\varphi}
\newcommand{\conv}[2]{\operatorname*{\longrightarrow}_{#1 \rightarrow #2}}
\newcommand{\vertiii}[1]{{\vert\kern-0.08em\vert\kern-0.08em\vert #1 \vert\kern-0.08em\vert\kern-0.08em\vert}}
\title{On uniqueness for the three-dimensional Vlasov-Navier-Stokes system}
\author{D.~Han-Kwan}
\address{LMJL, CNRS, Nantes Université.}
\email{daniel.han-kwan@univ-nantes.fr}
\author{É.~Miot}
\address{IF, CNRS, Universit\'e Grenoble-Alpes.}
\email{evelyne.miot@univ-grenoble-alpes.fr}
\author{A.~Moussa}
\address{LJLL, Sorbonne Université, Université Paris Cité, CNRS, Inria.}
\email{ayman.moussa@sorbonne-universite.fr}
\author{I.~Moyano}
\address{LJAD, CNRS, Universite Côte-d'Azur.}
\email{Ivan.moyano@univ-cotedazur.fr}
\begin{document}
\maketitle

\begin{abstract}
We study the problem of uniqueness of Leray solutions to the  three-dimensional Vlasov-Navier-Stokes system. We establish uniqueness whenever the fluid velocity field belongs to the Cannone-Meyer-Planchon class, which allows to go beyond the Osgood uniqueness class. A stability estimate in this setting is also provided.
\end{abstract}
\section{Introduction}
\subsection{The Vlasov-Navier-Stokes sytem}
The Vlasov–Navier–Stokes (VNS) system arises as a kinetic-fluid coupling model describing the dynamics of dilute particles immersed in a viscous incompressible fluid. It plays a central role in the modeling of sprays and serves as a mathematically rich prototype combining aspects of kinetic theory and fluid dynamics. In the whole space $\R^3$, the system is the following:
\begin{equation}
    \text{(VNS)}\left\{\begin{aligned}
    &\partial_t u + u\cdot\nabla u - \Delta u +\nabla p = \int_{\R^3} f(v-u)\,\dd v,\\
    &\div u = 0, \\
    &\partial_t f+ v \cdot\nabla_x f + \nabla_v\cdot[f(u-v)] = 0.
    \end{aligned}\right.
\end{equation}
It consists of the incompressible  Navier–Stokes equations for the fluid velocity field $u:\R_+\times\R^3\rightarrow\R^3$ coupled with a Vlasov equation for the particle density function $f:\R_+\times\R^3\times\R^3\rightarrow\R_+$ in phase space, with interaction mediated by a drag acceleration for the kinetic component and the so-called \emph{Brinkman force} as a source term in the fluid equation.  Note that with $\mathbf{P}$ the Leray projector on solenoidal vector fields, the Navier-Stokes equations in (VNS) can be recast as
$$
\partial_t u + \mathbf{P}[u\cdot\nabla u] - \Delta u = \mathbf{P}\int_{\R^3} f(v-u)\,\dd v.
$$

\subsection{State of the art}
The existence of global weak solutions for the VNS system has been known since the early days of the theory \cite{anobou,bdgm}. The minimal setting allowing to exploit the energy dissipation of the system and thus to obtain the existence of global solutions is the Leray framework for the fluid component and the  renormalized setting (with Sobolev vector fields) for the kinetic one. This can be ensured provided the initial data satisfy some minimal regularity assumption, which motivates the following definition.
\begin{defi}\label{def:admi}
  We say that $(f^\init,u^\init)$ is an \textsf{admissible} initial data if:
  \begin{itemize}
  \item[$\bullet$] $u^\init \in\mathrm{L}^2(\R^3)$ is divergence-free, namely $\div u^\init=0$ ;
  \item[$\bullet$] $f^\init$ is non-negative and essentially bounded ;
  \item[$\bullet$] $(x,v)\mapsto (1+|v|^2)f^\init(x,v)$ is integrable.
  \end{itemize}
\end{defi}
The moments of the kinetic component will play a crucial role in our analysis, and we will often ask some decay in the velocity variable for the density function. The following notations will therefore be useful (we omit here the time variable). For $g:\R^3\times\R^3\rightarrow\R_+$ we use the notation $m_k g$ (for $k\in\N$) its (scalar) velocity moments that is
\[\forall x\in\R^3,\qquad m_k g(x) := \int_{\R^3} g(x,v)|v|^k\,\dd v,\]
and we write
\[ M_k g  := \int_{\R^3} m_k g(x) \,\dd x = \iint_{\R^3\times\R^3} g(x,v) |v|^k\,\dd v\,\dd x.\]
We also introduce the macroscopic density function defined by
$$\forall x\in\R^3,\qquad \rho_g(x) = \int_{\R^3} g(x,v)\,\dd v$$ and we denote by $j_g$ the current defined by
\begin{align*}
\forall x\in\R^3, \qquad j_g(x) = \int_{\R^3} g(x,v) v\,\dd v.
\end{align*}
Lastly, for any (sufficiently smooth) vector-field $w:\R^3\rightarrow\R^3$ we define the following functionals (energy and dissipation) 
\begin{align}
      \label{def:E}    \textnormal{E}(g,w) &= \frac12 \int_{\R^3} |w(x)|^2\,\dd x +\frac12  \iint_{\R^3\times\R^3} g(x,v)|v|^2\,\dd x\,\dd v,\\
       \label{def:D}\textnormal{D}(g,w) &= \int_{\R^3} |\nabla w(x)|^2\,\dd x + \iint_{\R^3\times\R^3} g(x,v)|v-w(x)|^2\,\dd x\,\dd v.
  \end{align}
Following \cite{hkm3} we define \emph{Leray solutions} for the VNS system.
\begin{defi}\label{def:leray}
  For any admissible data $(f^\init,u^\init)$ in the sense of Definition~\ref{def:admi}, a \textsf{Leray solution} of the \textnormal{VNS} system is a pair $(f,u)$, with a fluid component $u\in\Ll^\infty(\R_+;\mathrm{L}^1(\R^3)) \cap \Ll^2(\R_+;\H^1(\R^3))$, a kinetic component $f$ essentially bounded, such that $t\mapsto M_0 f(t) + M_2 f(t)$ belongs to $\Ll^\infty(\R_+)$, solving the \textnormal{VNS} system in the distributional sense and satisfying the estimate 
  \begin{align}\label{ineq:nrj}
      \textnormal{E}(f(t),u(t)) + \int_0^t \textnormal{D}(f(s),u(s))\,\dd s \leq \E^\init:=\textnormal{E}(f^\init,u^\init),
  \end{align}
  where the energy and dissipation functionals $\E$ and $\D$ are defined in \eqref{def:E} -- \eqref{def:D}.
\end{defi}
For admissible data, existence of a Leray solution is known for various settings and boundary conditions  \cite{anobou,bdgm,hkm3,bgm}. The aim of this article is to provide a weak-strong uniqueness result (see Subsection~\ref{subsec:main} for  precise statements) for the VNS system, in the three-dimensional setting. This is a natural extension of our previous contribution \cite{hkm3} in which uniqueness of Leray solutions for the VNS system was established in the case of dimension $2$. Namely, the main result of \cite{hkm3} is
\begin{thm}
  In dimension $2$, consider an admissible initial data $(u_0,f_0)$ such that $f_0(x,v)\lesssim (1+|v|)^{-q}$ for some $q>4$. The Vlasov-Navier-Stokes system admits a unique Leray solution associated with $(u_0,f_0)$.
\end{thm}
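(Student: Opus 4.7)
The strategy is a weak-strong/Gronwall comparison of two hypothetical Leray solutions $(f_i,u_i)$, $i=1,2$, sharing the same admissible datum satisfying $f_0(x,v)\lesssim (1+|v|)^{-q}$ with $q>4$. First I would propagate the pointwise decay in velocity: along the Vlasov characteristics $\dot X_i = V_i$, $\dot V_i = u_i(t,X_i)-V_i$, the Vlasov equation integrates to $f_i(t,X_i(t),V_i(t)) = e^{2t} f_0(X_i(0),V_i(0))$, and the Duhamel formula for $V_i$ together with the bound $\|u_i\|_{L^1_{\mathrm{loc}}L^\infty}$ (to be justified a posteriori) upgrades this to a pointwise bound $f_i(t,x,v)\lesssim \Psi(t)(1+|v|)^{-q}$ for a locally bounded $\Psi$. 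In dimension~$2$, the threshold $q>4$ is precisely what makes all moments $m_0 f_i$, $m_1 f_i$, $m_2 f_i$ lie in $\Ll^\infty(\R_+;\mathrm{L}^\infty(\R^2))$, which is the structural gain of the hypothesis.

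With these moment bounds, the Brinkman source $j_{f_i}-\rho_{f_i}u_i$ sits in favourable Lebesgue spaces, and I would invoke 2D Navier--Stokes regularisation (Ladyzhenskaya uniqueness, $\mathrm{H}^1$-persistence, maximal parabolic regularity) to upgrade each $u_i$ to a solution with at least Osgood (log-Lipschitz) spatial regularity, which is exactly what is needed for the Vlasov characteristics $(X_i,V_i)$ to be unambiguously defined and to depend in an Osgood-continuous way on $u_i$.

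The comparison itself would run on two coupled estimates. On the Vlasov side, subtracting the characteristic ODEs for $u_1$ and $u_2$ and using the Osgood regularity yields, for a $1$-Wasserstein-type distance $W$ weighted to absorb the $|v|$-decay,
\begin{equation*}
W(f_1(t),f_2(t)) \leq \int_0^t \omega\!\left(\|u_1(s)-u_2(s)\|_{\mathrm{L}^\infty(\R^2)}\right)\dd s,
\end{equation*}
and via Step~1 the differences $j_{f_1}-j_{f_2}$ and $\rho_{f_1}-\rho_{f_2}$ are dominated by $W(f_1,f_2)$ in a suitable dual norm. On the fluid side, the $\mathrm{L}^2$-energy estimate for $\delta u := u_1-u_2$ reads
\begin{equation*}
\tfrac12\tfrac{\dd}{\dd t}\|\delta u\|_{\mathrm{L}^2}^2 + \|\nabla\delta u\|_{\mathrm{L}^2}^2 \leq \mathcal{N}(t) + \mathcal{B}(t),
\end{equation*}
where $\mathcal{N}$ collects the Navier--Stokes trilinear terms (controlled by the 2D Ladyzhenskaya inequality and absorbed by the dissipation) and $\mathcal{B}$ collects the Brinkman-difference terms (controlled by the moment bounds of Step~1 and by $W(f_1,f_2)$).

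Combining the two estimates for the functional $\mathcal{E}(t) := \|\delta u(t)\|_{\mathrm{L}^2}^2 + W(f_1(t),f_2(t))$ leads to an Osgood-type inequality $\mathcal{E}(t) \leq \int_0^t \mu(\mathcal{E}(s))\,\dd s$ with $\mu$ an Osgood modulus, forcing $\mathcal{E}\equiv 0$ via the Osgood lemma. The main obstacle lies in the coupling of the last two steps: one has to choose the Vlasov-side distance $W$ so that it is simultaneously controllable by a time-integral of $\|u_1-u_2\|_{\mathrm{L}^\infty}$ (through the characteristic comparison) \emph{and} controls the Brinkman-difference terms in the fluid estimate, all while staying within the Osgood class rather than requiring Lipschitz regularity of $u_i$; this is precisely where the sharp pointwise decay threshold $q>4$ in dimension~$2$ is used.
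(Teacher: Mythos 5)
Your skeleton (propagation of the $(1+|v|)^{-q}$ decay to get $m_0f_i,m_1f_i,m_2f_i\in\Ll^\infty(\R_+;\mathrm{L}^\infty)$, 2D parabolic regularisation to an Osgood/log-Lipschitz velocity, then a coupled Gr\"onwall--Osgood argument on a fluid $\mathrm{L}^2$ distance plus a kinetic distance) is the right architecture and matches the cited proof of \cite{hkm3}, whose structure is reproduced in Section~\ref{sec:uncoupled} of the present paper. But there is a genuine gap in how you couple the two estimates. You propose to bound the kinetic distance by
$W(f_1(t),f_2(t))\leq\int_0^t\omega(\|u_1(s)-u_2(s)\|_{\mathrm{L}^\infty})\,\dd s$,
while your fluid-side functional only controls $\|u_1-u_2\|_{\mathrm{L}^2}^2$ and $\int\|\nabla(u_1-u_2)\|_{\mathrm{L}^2}^2$. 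The difference $\delta u$ of two Leray solutions lives only in the energy space; its $\mathrm{L}^\infty$ norm is not dominated by $\mathcal{E}(t)=\|\delta u\|_{\mathrm{L}^2}^2+W(f_1,f_2)$, so the claimed inequality $\mathcal{E}(t)\leq\int_0^t\mu(\mathcal{E}(s))\,\dd s$ does not close. The same mismatch appears in the Brinkman-difference term: with a $\W_1$-type distance you cannot perform the Cauchy--Schwarz step that produces a dissipation term $\ep\|\nabla\delta u\|_{\mathrm{L}^2}^2$ (absorbable on the left) times an $\mathrm{L}^\infty$ moment of $f$.

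The missing idea is Loeper's \emph{quadratic Lagrangian} functional $Q_{\Z_1,\Z_2}(t)=\iint f^\init|\Z_1(t)-\Z_2(t)|^2$ (see \eqref{def:Q}), used instead of $\W_1$, combined with the maximal-function inequality $|\ffi(x)-\ffi(y)|\lesssim|x-y|\big[M|\nabla\ffi|(x)+M|\nabla\ffi|(y)\big]$. With these two tools, both the characteristic comparison and the Brinkman difference are controlled by $\|\delta u\|_{\mathrm{L}^2}^2$, by $Q$, and by $\ep\|M\nabla\delta u\|_{\mathrm{L}^2}^2$ (absorbed into the dissipation thanks to the $\mathrm{L}^\infty$ bounds on $m_0f$ and $m_2f$ that your Step~1 provides); only the regularity of \emph{one} velocity field enters as a Gr\"onwall weight, through $\|\nabla\widetilde u\|_{\mathrm{L}^\infty}$ (or its log-Lipschitz substitute, whence the Osgood lemma). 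This is exactly the content of Lemmata~\ref{lem:dynw} and~\ref{lem:dynQ}: note that in Lemma~\ref{lem:dynQ} the velocity difference appears only through $\int_0^t\|m_0f\|_\infty\|w(s)\|_2^2\,\dd s$, never through $\|w\|_\infty$. Without replacing your $\omega(\|\delta u\|_\infty)$ coupling by this $\|\delta u\|_2^2$ coupling, the argument does not go through.
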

In this bidimensional setting, uniqueness was already known for each equation, Vlasov or Navier-Stokes, taken independently from the other. Indeed, on one hand, given a Sobolev vector field, uniqueness of renormalized solutions for the corresponding linear transport equation is ensured by \cite{dipernalions} and on the other hand, uniqueness for 2D Navier-Stokes in ensured by the use of Ladyzhenskaya's inequality, a result  attributed to Lions and Prodi \cite{prodilions}. Of course, when studying uniqueness for the VNS system, the difficulty stems from the nonlinear coupling between the two phases. This situation is quite reminiscent for instance to the one of the Vlasov-Poisson equation (in which uniqueness is even more straightforward when both components are taken independently) and actually the core functional used in \cite{hkm3} was inspired by the one used in Loeper's uniqueness result for the Vlasov-Poisson system \cite{loeper}. 

Now, what about dimension three? It is important to note that any uniqueness result for the VNS system also implies uniqueness for the Navier-Stokes equation (taking $(0,u^\init)$ as initial data). Therefore, positive or negative results for the sole Navier-Stokes equation are an expected barrier. In dimension $3$ the situation is thus by far more complex, and the uniqueness or non-uniqueness of Leray solutions is unknown at the time of writing of this paper. The  work \cite{colomboetal} recently proved the non-uniqueness of Leray solutions for a {\it forced} Navier-Stokes equation, giving a strong hint that uniqueness may not hold in general; moreover let us remark that the Vlasov-Navier-Stokes system itself can be seen as a forced Navier-Stokes equation, which clearly suggests that uniqueness of Leray equations for VNS cannot be expected. See also the  works \cite{buckmastervicol} and \cite{cheskidovluo} for what concerns non-uniqueness of {\it weak} solutions (in the distributional sense) for the (unforced) Navier-Stokes equation. 

Alternatively,  well-posedness in critical spaces has been put forward, leading to solutions that are named after their authors, \emph{e.g.} Fujita-Kato  \cite{fk}, Kato \cite{kato}, Cannone-Meyer-Planchon  \cite{cmp}, Koch-Tataru  \cite{kt}:
$$
\underbrace{\dot{\H}^{1/2} (\R^3)}_{\text{Fujita-Kato}} \hookrightarrow \underbrace{\mathrm{L}^3(\R^3)}_{\text{Kato}} \hookrightarrow \underbrace{\dot{\B}_{p,\infty}^{-1+3/p}(\R^3)}_{\substack{\text{Cannone-Meyer-Planchon}\\ \text{for }p\in [3,\infty)}} \hookrightarrow \underbrace{\mathrm{bmo}^{-1}}_{\text{Koch-Tataru}}
$$
This is a sequence of strict continuous embeddings. The Besov space $\dot{\B}_{p,\infty}^{-1+3/p}(\R^3)$ (for $p\in (3,\infty)$) will play an important role in this work; we do not wish to provide a precise definition in this introduction (we rather send the interested reader to Appendix~\ref{sec:lp}). Let us only note that for $p>3$, this is a space of distributions of possibly negative regularity, which contains $\mathrm{L}^3(\R^3)$.
In these critical frameworks, local-in-time solutions can be ensured to be global at the price of a smallness assumption on the data. For a comprehensive description of this large piece of literature, see e.g. \cite{LR2002} or \cite{bcd}. Of course, these functional settings differ from the one of Leray solutions and uniqueness is a priori only ensured in the subclass of such solutions. A substantial amount of articles tried to bridge these two points of view through \emph{weak-strong} uniqueness result, ensuring that whenever a strong enough solution exists, all Leray solution must coincide with this one, see \cite{serrin,cheminJAM,chemincpam,barker}. 
In particular the work \cite{chemincpam} by Chemin is a strong inspiration of our work. 
\medskip

As far as the VNS system is concerned, the existence theory in a smoother setting than Leray solutions, even though a natural question, has been poorly explored \emph{per se} in the early days of the theory. This question has been indirectly raised through the pionneering work of Choi and Kwon \cite{ck} in which a monokinetic behavior for the kinetic component in large time was proven, under the conditional assumption that the local density $\rho_f$ of the kinetic phase belongs to $\mathrm{L}^\infty(\R_+;\mathrm{L}^{3/2})$. It is while trying to produce  such type of solution to the VNS system that the first example of global strong solutions (under a smallness assumption for the initial data) appeared in the literature in \cite{hkm2} (still, not in the whole space but in the torus). This construction, which used the aforementioned \cite{ck}, led to the first rigorous proof of monokinetic behavior for the system. Shortly after, similar results were established in different geometric settings (\cite{hk} for the whole space, \cite{ehkm} for a bounded domain). More recently, Danchin revisited in \cite{Danchin} the whole space case $\R^3$, establishing optimal convergence rate and in passing a well-posedness result in the Fujita-Kato setting. Lastly, Danchin and Shou, in the two-dimensional torus, managed to prove in \cite{DS} the same large-time behavior but for initial data not necessarily close to equibrium.
\subsection{Main results}\label{subsec:main}
Before stating our main results, we need to introduce a few definitions and notations. A \emph{smooth dyadic approximate identity} is any sequence $(\psi_j)_{j\geq 0}$ originating from a smooth Schwartz function $\psi$ with integral $1$, that is: for all $j\in\N$ and all $x\in\R^3$ there holds $\psi_j(x) := 8^{j}\psi(2^j x)$. The following notion of \emph{well-approximated} functions is strongly inspired from \cite{chemincpam}, even though this exact definition was not explicitly highlighted. It turns out this well-approximation property is at the core of our uniqueness and stability results. 
\begin{defi}\label{def:well}
  Fix $T>0$. We say that a vector-field $u\in\mathrm{L}^2(0,T;\mathrm{L}^2(\R^3))$ is \textsf{well-approximated} if there exists a smooth dyadic approximate identity $(\psi_j)_j$ for which
  \begin{itemize}
\item[$\bullet$] There exists a positive sequence $(a_k)_k\in \textnormal{c}_0(\N)$ such that, for all $j\in\mathbf{N}$ we have \[\int_0^T \|u \star \psi_j\|_{\infty}^2\,\dd s + \int_0^T \|\nabla (u \star \psi_j)\|_{\infty}\,\dd s \leq  \sum_{k=0}^j a_k.\] 
\item[$\bullet$] There exists $\alpha>0$ such that 
 \[\int_0^T \|(u\star \psi_j)\otimes (u \star \psi_j) - (u\otimes u)\star \psi_j\|_2^2\,\dd s \lesssim 2^{-\alpha j}.\]
\end{itemize}
\end{defi}
{The well-approximation property automatically holds for smooth vector fields. The first condition is for instance trivially matched as soon as $u \in \mathrm{L}^1(0,T;\textnormal{W}^{1,\infty}(\R^3))$ because in that case, the l.h.s. is uniformly bounded, but this condition also allows vector fields which are far from being Lipschitz but for which the l.h.s. "does not blow-up too fast" with $j$. Likewise, the second condition is met when for instance $u \in \mathrm{L}^1(0,T;\mathrm{L}^\infty(\R^3))$ and both $u$ and $u\otimes u$ belong to a Sobolev space $\mathrm{L}^1(0,T;\H^s(\R^3))$ for some $s>0$ (because $\|f-f\star \psi_j\|_2 \lesssim 2^{-sj}$ for $f$ in $\H^s(\R^3)$). Again, the required regularity can actually be strongly relaxed, as we shall see.}

In order to quantify the decay in velocity of the kinetic component of the initial data, the following notation will be useful. For any initial data $f^\init$ and any $q\geq 0$, set
\begin{align}\label{def:Nq}
N_q(f^\init) := \sup_{(x,v)\in\R^3\times\R^3} (1+|v|^q) f^\init(x,v).
\end{align}

\subsubsection{Weak-strong uniqueness}

We are now in position to state our first main result. 
\begin{thm}\label{thm:univns}
  Fix $(f^\init,u^\init)$ an admissible initial data such that furthermore $u^\init$ belongs to $\H^s(\R^3)$ for some $s>0$ and $M_6 f^\init <+\infty$,  $N_q(f^\init)<+\infty$ for some $q>4$. For two Leray solutions $(f_1,u_1)$ and $(f_2,u_2)$ of the \textup{VNS} system on $[0,T]$, if either $u_1$ or $u_2$ is well-approximated in the sense of Definition~\ref{def:well}, then $(f_1,u_1)=(f_2,u_2)$.
\end{thm}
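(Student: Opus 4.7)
The plan is to run a coupled Grönwall-type weak-strong uniqueness argument, estimating simultaneously the $\mathrm{L}^2$ norm of $\delta u := u_1 - u_2$ and a transport-type distance between $f_1$ and $f_2$, in the spirit of Chemin's approach \cite{chemincpam} to Navier-Stokes and of our earlier Loeper-inspired work \cite{hkm3} on two-dimensional VNS. Without loss of generality, assume $u_1$ is well-approximated, and let $(\psi_j)_j$ be the associated smooth dyadic approximate identity. The driving idea is to never test $u_1$ itself in a Lipschitz or $\mathrm{L}^\infty$ norm, but to substitute its truncation $u_1 \star \psi_j$ at a level $j = j(t)$ to be chosen at the end, paying each time for the approximation via the two bullets of Definition~\ref{def:well}.

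On the fluid side, I would write the equation for $\delta u$ and perform a formal $\mathrm{L}^2$ energy estimate. The convective term produces the dangerous contribution $\int \delta u \otimes \delta u : \nabla u_1$, split via $\nabla u_1 = \nabla(u_1 \star \psi_j) + \nabla(u_1 - u_1 \star \psi_j)$. The low-frequency part is controlled in time by $\int_0^t \|\nabla(u_1 \star \psi_j)\|_\infty \,\dd s \lesssim \sum_{k \leq j} a_k$, a slowly growing weight; the high-frequency part is rewritten by integration by parts as a commutator of the form $(u_1 \star \psi_j) \otimes (u_1 \star \psi_j) - (u_1 \otimes u_1) \star \psi_j$, which is $\mathrm{L}^2$-small by the second bullet. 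The Brinkman forcing
\[
\int \bigl[(j_{f_1} - \rho_{f_1} u_1) - (j_{f_2} - \rho_{f_2} u_2)\bigr]\cdot \delta u
\]
is rearranged into a friction term $\int (\rho_{f_1} + \rho_{f_2}) |\delta u|^2$, absorbable once $\|\rho_{f_i}\|_\infty$ is controlled, plus difference terms involving $j_{f_1} - j_{f_2}$ and $\rho_{f_1} - \rho_{f_2}$ that must be handled on the kinetic side. The moment bounds $M_6 f^\init < \infty$ and $N_q(f^\init) < \infty$ with $q > 4$ propagate along the (now Lipschitz) characteristics of $u_i \star \psi_j$ and deliver the required $\mathrm{L}^\infty_x$ control of $\rho_{f_i}$ and $j_{f_i}$, while $u^\init \in \H^s$ provides the initial gain needed to start this propagation.

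To compare $f_1$ and $f_2$ I would follow \cite{hkm3} and work with characteristics $(X_i, V_i)(t; x, v)$ of $(v, u_i(t,x) - v)$, then study
\[
\mathcal{Q}(t) := \iint f^\init(x,v)\bigl(|X_1 - X_2|^2 + |V_1 - V_2|^2\bigr)\,\dd x\,\dd v.
\]
Differentiating and splitting $u_1(X_1) - u_2(X_2) = (u_1(X_1) - u_1(X_2)) + (u_1(X_2) - u_2(X_2))$, the first piece is absorbed via the Lipschitz bound on $u_1 \star \psi_j$ plus a high-frequency remainder controlled by the commutator hypothesis, while the second, after a change of variables using $\div u_2 = 0$ and the decay of $f^\init$ in $v$, essentially reduces to $\|\delta u(t)\|_{\mathrm{L}^2}^2$. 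Feeding this back, the kinetic difference terms in the fluid estimate are controlled by $\mathcal{Q}$ via quantitative Wasserstein-type inequalities. One then obtains a differential inequality of the form
\[
\frac{\dd}{\dd t}\bigl(\|\delta u\|_2^2 + \mathcal{Q}\bigr) \leq A_j(t)\bigl(\|\delta u\|_2^2 + \mathcal{Q}\bigr) + \varepsilon_j(t),
\]
with $\int_0^T A_j \lesssim \sum_{k \leq j} a_k$ and $\varepsilon_j(t) \to 0$ as $j \to \infty$. Uniqueness follows by optimizing $j = j(t)$, essentially an Osgood-type modulus generated by the $\mathrm{c}_0$ sequence $(a_k)$. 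The main obstacle, in my view, is to synchronize the two loss mechanisms: the fluid commutator estimate degrades as $j$ grows, the kinetic Grönwall constant degrades at the same rate, and one must verify that the $\mathrm{L}^\infty_x$ moment bounds on $\rho_{f_i}, j_{f_i}$ — which depend on $q > 4$, on $M_6$, and on $\|u_i \star \psi_j\|_\infty$ — remain uniform in $j$, so that the final optimization is not spoiled by the kinetic side.
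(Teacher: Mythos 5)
Your overall architecture (mollify $u_1$ by $\psi_j$, couple an $\mathrm{L}^2$ estimate for the fluid with the Loeper-type trajectory functional $\mathcal{Q}$, close by Gr\"onwall and pay for the mollification through the two bullets of Definition~\ref{def:well}) is indeed the paper's strategy, but two of your intermediate mechanisms would fail as written and the decisive closing step is missing. On the fluid side you compare $u_1$ and $u_2$ directly and split $\nabla u_1=\nabla(u_1\star\psi_j)+\nabla(u_1-u_1\star\psi_j)$: the high-frequency term $\int \delta u\otimes\delta u:\nabla(u_1-u_1\star\psi_j)$ does \emph{not} become the commutator $(u_1\star\psi_j)\otimes(u_1\star\psi_j)-(u_1\otimes u_1)\star\psi_j$ by integration by parts, and there is no norm available to control it, since $\delta u$ lives only in $\mathrm{L}^\infty_t\mathrm{L}^2\cap\mathrm{L}^2_t\dot{\H}^1$. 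The paper instead convolves the whole $u_1$-equation with $\psi_j$, so that $u_{1\star j}$ solves a forced Navier--Stokes equation whose error $G_j$ carries the commutator, and then estimates the two gaps $u_1-u_{1\star j}$ and $u_{1\star j}-u_2$ separately via Lemma~\ref{lem:dynw}, always with the Lipschitz field $u_{1\star j}$ in the ``smooth'' slot; the Brinkman difference is handled there by the maximal-function inequality, which is what allows the dissipation to absorb $\|\nabla \delta u\|_2^2$. The same interleaving is indispensable on the kinetic side: your splitting requires bounding $u_1(X_1)-u_1(X_2)$, and the remainder $(u_1-u_1\star\psi_j)(X_1)-(u_1-u_1\star\psi_j)(X_2)$ is not controlled by the second bullet of Definition~\ref{def:well}, which is an $\mathrm{L}^2_{t,x}$ bound on a quadratic commutator and not a pointwise estimate along trajectories. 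The paper inserts the trajectory $\Z_{1\star j}$ of the mollified field, estimates $Q_{\Z_1,\Z_{1\star j}}$ and $Q_{\Z_{1\star j},\Z_2}$ via Lemma~\ref{lem:dynQ}, and feeds in the already-controlled quantity $\|u_1-u_{1\star j}\|_2^2$.

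More importantly, the step you yourself call ``the main obstacle''--- synchronizing the growth of the Gr\"onwall constant with the decay of the errors --- is the heart of the proof and is not resolved by ``optimizing $j=j(t)$'' or by an Osgood modulus; the whole point, emphasized in the introduction, is to go \emph{beyond} the Osgood class. The paper keeps $j$ fixed, runs Gr\"onwall on $[0,T]$ to obtain a bound of the form $\exp(\C A(T)^2 B_j(T))\cdot \mathrm{O}(e^{-\beta j})$ with $B_j(T)\leq 1+T+\sum_{k=0}^j a_k$, and then lets $j\to\infty$: since $(a_k)\in\mathrm{c}_0(\N)$ one has $\C A(T)^2 a_k<\beta/2$ for $k$ large, so $\exp(\C A(T)^2\sum_{k\leq j}a_k)$ grows at most like a constant times $e^{\beta j/2}$ and is beaten by $e^{-\beta j}$. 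Without this quantitative use of the $\mathrm{c}_0$ hypothesis against the exponential smallness of $\|u^\init-u^\init\star\psi_j\|_2^2$ and $\int_0^T\|G_j\|_{\dot{\H}^{-1}}^2$, the argument does not close.
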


In Section~\ref{sec:besapp}, we will exhibit (following Chemin in \cite{chemincpam}) a sufficient condition for the well-approximation in terms of Besov regularity. In the present work, we also build a local solution in the corresponding setting (see Appendix~\ref{subsec:chemlen} for a precise definition of the Besov spaces $\dot{\B}_{p,\infty}^{-1+3/p}(\R^3)$ and  $\dot{\mathbb{B}}_p(T)$), ensuring  the non-emptiness of Theorem~\ref{thm:univns}. 

\begin{thm}
 \label{thm:BesovVNS}
  Fix $(f^\init,u^\init)$ an admissible initial data such that furthermore $u^\init$ belongs to $\H^s(\R^3)$ for some $s>0$ and $M_6 f^\init <+\infty$. Assume also for some $p\in(3,\infty)$ that $u^\init \in \dot{\mathbb{B}}_{p,\infty}^{-1+3/p}(\R^3)$ and for some $q>5$ that $N_q(f^\init)<+\infty$. Then, there exists a non-trivial interval of time $[0,T]$ on which all Leray solutions to the \textup{VNS} system coincide. Moreover, the fluid velocity of this unique solution belongs to $\dot{\mathbb{B}}_p(T)$.
\end{thm}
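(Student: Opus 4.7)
The overall strategy is to first construct a local strong solution $(f,u)$ with fluid component in $\dot{\mathbb{B}}_p(T)$ by a Picard iteration inspired by Cannone-Meyer-Planchon, then argue that this constructed solution is in fact a Leray solution whose velocity is well-approximated in the sense of Definition~\ref{def:well}, so that Theorem~\ref{thm:univns} applies and yields the uniqueness claim.

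For the fixed-point step, I would write the Navier-Stokes component in mild form
\[ u(t) = e^{t\Delta} u^\init - \int_0^t e^{(t-s)\Delta} \P\,\nabla\!\cdot\!(u\otimes u)(s)\,\dd s + \int_0^t e^{(t-s)\Delta}\P(j_f - \rho_f u)(s)\,\dd s, \]
and solve the Vlasov equation for $f$ along the characteristics generated by the current iterate $u$. The space $\dot{\mathbb{B}}_p(T)$ is tailored so that the bilinear term $B(u,v) = \int_0^t e^{(t-s)\Delta}\P\nabla\!\cdot\!(u\otimes v)\,\dd s$ is continuous and of small norm for $T$ small, following the classical Cannone-Meyer-Planchon bilinear estimate. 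The new ingredient is the Brinkman term, which I would estimate by propagating along characteristics the pointwise bound $N_q(f(t))$ and the moments $M_kf(t)$ for small $k$; the pointwise bound together with $q>5$ guarantees that $\rho_f, j_f \in \L^\infty_t \L^\infty_x$ on the short interval, since $m_k g(x) \lesssim N_q(g)^{1-(k+3)/q}\rho_g(x)^{(q-k-3)/q}$ and $u$ gains parabolic regularity out of $\dot{\mathbb{B}}_{p,\infty}^{-1+3/p}$. The product $\rho_f u$ and the linear source $j_f$ can then be fed into the Duhamel operator, and closing the estimates on a small ball provides the contraction that yields a local solution $(f,u)$ with $u\in\dot{\mathbb{B}}_p(T)$ and $f$ sharing the same pointwise and moment bounds as the data.

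Next I would check that the limit $(f,u)$ is a Leray solution. The $\H^s$-regularity of $u^\init$ and the finiteness of $M_6 f^\init$ allow to pass the usual Leray energy identity to the limit in the iteration (by lower semicontinuity of the dissipation and the fact that the Besov solution gives genuine strong convergence on $[0,T]$ sufficient to identify the nonlinearities), so that the energy inequality \eqref{ineq:nrj} holds; conservation of mass and moments of order $2$ and $6$ then place $(f,u)$ in the Leray class of Definition~\ref{def:leray}.

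Finally, to trigger Theorem~\ref{thm:univns}, I would invoke the criterion of Section~\ref{sec:besapp}, which exhibits the well-approximation property as a consequence of Besov regularity in the Cannone-Meyer-Planchon scale: the parabolic $\dot{\mathbb{B}}_p(T)$-norm controls $\|u\star\psi_j\|_\infty$ and $\|\nabla(u\star\psi_j)\|_\infty$ in integrable-in-time norms with the admissible growth $\sum_{k\leq j} a_k$, and the paraproduct decomposition of $u\otimes u - (u\star\psi_j)\otimes(u\star\psi_j)$ furnishes the algebraic decay $2^{-\alpha j}$ required by the second condition of Definition~\ref{def:well}. Once well-approximation is established on $[0,T]$, Theorem~\ref{thm:univns} forces every Leray solution of (VNS) with the same data to coincide with our constructed $(f,u)$ on this interval. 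The main obstacle I anticipate is the contraction step: balancing the critical, scale-invariant Besov estimates for the Navier-Stokes part against the Lagrangian estimates on $f$, which only close because of the slightly stronger pointwise decay $q>5$ (versus $q>4$ in Theorem~\ref{thm:univns}), and arranging the iteration so that $\rho_f$, $j_f$ and $\rho_f u$ produce forcing terms compatible with the $\dot{\mathbb{B}}_p(T)$ scale.
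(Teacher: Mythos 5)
Your final step (Besov regularity of the fluid velocity implies the well-approximation property via Proposition~\ref{prop:Besovwell}, hence Theorem~\ref{thm:univns} applies) is exactly the paper's, but the way you produce a Leray solution with velocity in $\dot{\mathbb{B}}_p(T)$ contains a genuine gap. You propose a Picard iteration on the \emph{coupled} system, solving the Vlasov equation along the characteristics of the current iterate $u$ and closing a contraction in $\dot{\mathbb{B}}_p(T)$. The contraction step requires a stability estimate for the map $u\mapsto (j_{f_u},\rho_{f_u})$, i.e.\ a comparison of the Lagrangian flows $\Z_{u_1}$ and $\Z_{u_2}$, and this is precisely what is \emph{not} available in the Cannone--Meyer--Planchon class: as the paper stresses, a velocity in $\dot{\mathbb{B}}_p(T)$ need not belong to $\mathrm{L}^1(0,T;\W^{1,\infty}(\R^3))$, nor even to an Osgood class (the Chemin--Lerner norm $\widetilde{\L}^1_T\dot{\B}^{1+3/p}_{p,\infty}$ puts the supremum over frequencies \emph{outside} the time integral, so $\int_0^T\|\nabla S_j u\|_\infty$ may grow like $j$). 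Consequently the flow of $(v,u(t,x)-v)$ is not classically well-posed and the Lipschitz-in-$u$ dependence of the Brinkman force that your contraction needs does not hold; circumventing this obstruction is the entire purpose of the well-approximation machinery of Sections~\ref{sec:uncoupled} and~\ref{sec:proof1}, which cannot simply be imported into an existence iteration. The secondary step, ``the limit of the iteration is a Leray solution'', is also not automatic and is handled in the paper by a separate mollified compactness argument.

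The paper avoids the coupled fixed point altogether by a decoupling trick: fix an arbitrary Leray solution $(f,u)$, freeze its Brinkman force $F=j_f-\rho_f u$ as an \emph{external} forcing, and run the fixed point only for the forced Navier--Stokes equation \eqref{eq:ns-forced} in the Kato-type space $\dot{\K}_p(T)$ (Lemma~\ref{lem:fixedpoint}), using Lemma~\ref{lem:brinkBp} (which needs $M_6 f^\init<\infty$ to place $F$ in $\Ll^2(\R_+;\mathrm{L}^{3/2})\cap\Ll^1(\R_+;\mathrm{L}^{3})$) and Lemma~\ref{lem:uinit} to verify the smallness of $S(u^\init,\P F)$ for small $T$. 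Chemin's weak--strong uniqueness theorem for the \emph{forced} Navier--Stokes equation then identifies $u$ with the mild solution $v$, which upgrades the regularity of the \emph{given} Leray velocity to $\dot{\mathbb{B}}_p(T)$ without ever solving a transport equation driven by a rough field. If you wish to keep a constructive route, you would have to either regularize the velocity inside the iteration and control the resulting commutator errors (in effect redoing Section~\ref{sec:proof1} inside the existence proof), or adopt the paper's frozen-force argument.
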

 Let us mention that in the recent work \cite{Danchin}, a uniqueness result is obtained in the framework of Fujita-Kato solutions, i.e. for initial fluid velocity  $u^\init \in \dot{\H}^{1/2}(\R^3)$.
This result builds on the method devised in \cite{hkm3} for the 2D case.
As a matter of fact, thanks to this method, one can check that uniqueness holds as soon as one Leray solution enjoys the following regularity:
\begin{equation}
\label{eq:strong}
u \in \mathrm{L}^2(0,T; \mathrm{L}^\infty) \cap \mathrm{L}^1 (0,T; \text{Log-Lipschitz}).
\end{equation}
This regularity can be ensured in the context of Fujita-Kato solutions (see \cite{cl}), hence uniqueness.


The functional setting of Theorem~\ref{thm:BesovVNS} corresponds to the celebrated Cannone-Meyer-Planchon framework, developed for the Navier-Stokes system. In particular it generalizes the aforementioned uniqueness result in the Fujita-Kato class. In the Cannone-Meyer-Planchon setting, the regularity~\eqref{eq:strong} cannot be ensured. Even worse, the velocity field cannot be ensured to belong to a space for which Osgood's uniqueness theorem holds. Consequently, the method of \cite{hkm3} must be substantially modified. The idea developed in the present paper, strongly inspired by Chemin's uniqueness result for Navier-Stokes \cite{chemincpam}, is to use an approximation argument in order to reduce to smooth fluid velocity fields, and finally to rely on the \emph{well-approximated} property of Definition~\ref{def:well} to ensure that the errors made are acceptable and mitigated in the limit.
As a result, for uniqueness to hold for the Vlasov-Navier-Stokes system, it is possible to go beyond Osgood's class, in sharp contrast with other nonlinear Vlasov equations: in particular for  the Vlasov-Poisson system, all known uniqueness results \cite{loeper,Miot,hmiot,ciss} involve a force field that is regular enough to apply a (second-order) Osgood theorem for the associated trajectories.

Relying on the long time description of smooth solutions that was uncovered in \cite{Danchin} (see also \cite{hk}), we obtain a global existence and uniqueness result for VNS in the Cannone-Meyer-Planchon setting.
\begin{thm}\label{thm:BesovVNS:glob}
Consider the assumptions of Theorem~\ref{thm:BesovVNS}. 
Assume furthermore that $u^\init \in \dot{\B}^{-3/2}_{2,\infty}(\R^3)$.
There exists a continuous decreasing function $\Phi$ such that the following holds. If 
\begin{equation}
\label{hyp:smallnessBesov}
\| u^\init\|_{\dot{\B}_{p,\infty}^{-1+3/p}(\R^3)}  + \E^\init \leq \Phi\big(  \| f^\init\|_{1} +  N_q(f^\init) +\|u^\init\|_{\dot{\B}_{2,\infty}^{-3/2}(\R^3)}\big),
\end{equation}
then all Leray solutions to the  \textup{VNS} system coincide on $\R_+$ and the fluid velocity of this unique solution belongs to $\dot{\mathbb{B}}_p(+\infty)$.
\end{thm}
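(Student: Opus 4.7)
The plan is to combine the local-in-time statement of Theorem~\ref{thm:BesovVNS} with the small-data long-time analysis of \cite{Danchin,hk}, and to close the loop via Theorem~\ref{thm:univns} for uniqueness. Theorem~\ref{thm:BesovVNS} yields a maximal time $T^\star\in(0,+\infty]$ on which a Leray solution $(f,u)$ exists with $u\in\dot{\mathbb{B}}_p(T)$ for every $T<T^\star$. The goal is to show that, under the smallness hypothesis~\eqref{hyp:smallnessBesov}, one has in fact $T^\star=+\infty$ and $u\in\dot{\mathbb{B}}_p(+\infty)$. Uniqueness on each finite interval $[0,T]$ then follows from Theorem~\ref{thm:univns}, once one verifies that $u$ is well-approximated, which in turn holds thanks to the Besov sufficient condition announced in Section~\ref{sec:besapp}.

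Global existence is obtained by a bootstrap argument. The additional assumption $u^\init\in\dot{\B}_{2,\infty}^{-3/2}(\R^3)$ provides, via the heat semigroup, optimal decay $\|u(t)\|_{\mathrm{L}^2}\lesssim(1+t)^{-3/4}$ as soon as the Brinkman source is integrable in time with a matching exponent. This is precisely the regime analyzed in \cite{hk,Danchin}: one assumes a priori on $[0,T]\subset[0,T^\star)$ that
\[ \sup_{t\in[0,T]}\|u(t)\|_{\dot{\B}_{p,\infty}^{-1+3/p}} \leq 2\|u^\init\|_{\dot{\B}_{p,\infty}^{-1+3/p}}, \qquad \sup_{t\in[0,T]}(1+t)^{3/4}\|u(t)\|_{\mathrm{L}^2}\leq C\bigl(\|u^\init\|_{\dot{\B}_{2,\infty}^{-3/2}}+\sqrt{\E^\init}\bigr), \]
together with suitable decay for $\|\rho_{f}(t)\|_\infty$ and for the moments $M_kf(t)$, controlled by $\|f^\init\|_{1}$ and $N_q(f^\init)$ through the transport-drag mechanism along characteristics. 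Plugging these bounds into the Duhamel formula for $u$ and into the characteristic equations for $f$ improves each estimate by a strict factor, provided $\|u^\init\|_{\dot{\B}_{p,\infty}^{-1+3/p}}+\E^\init$ lies below a threshold $\Phi$ depending only on the quantities appearing on the right-hand side of~\eqref{hyp:smallnessBesov}. A standard continuation argument then yields $T^\star=+\infty$ and $u\in\dot{\mathbb{B}}_p(+\infty)$.

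The principal obstacle is the coupling between the Navier--Stokes and Vlasov estimates. The Besov estimate for $u$ is essentially classical once a good source term is given, and the moment and pointwise estimates on $f$ are standard once a Lipschitz drift is given, but here $u$ is merely in the Cannone--Meyer--Planchon class and one must perform the Lagrangian analysis with a drift that is not Lipschitz. The exponent $q>5$ (inherited from Theorem~\ref{thm:BesovVNS}) ensures that $N_q(f^\init)$ propagates pointwise along such characteristics with enough weight to yield the decay of $\rho_f$ and $j_f$, while the extra input $u^\init\in\dot{\B}_{2,\infty}^{-3/2}$ is precisely what places the Brinkman force in the Kato--Chemin source space associated with $\dot{\mathbb{B}}_p(+\infty)$. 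Matching all decay rates so as to close the bootstrap with a single universal decreasing function $\Phi$ is the technical heart of the proof.
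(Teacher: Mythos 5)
Your overall strategy (small data plus time decay implies global existence, then weak--strong uniqueness via Theorem~\ref{thm:univns}) is the right flavor, but there is a genuine gap: the bootstrap you sketch would require reproving, from scratch and in a low-regularity setting, the entire quantitative decay theory for VNS, and you do not supply (or correctly locate) the mechanism that makes this avoidable. The paper does not run a self-contained continuation argument on a maximal interval $[0,T^\star)$. Instead it invokes Danchin's global small-data theorem as a black box, and the hypotheses of that theorem include $u^\init\in\H^1(\R^3)$ --- which the data of Theorem~\ref{thm:BesovVNS:glob} do \emph{not} satisfy. The pivot you are missing is the instantaneous regularization step: since $M_6f^\init<\infty$ forces the Brinkman force into $\Ll^2(\R_+;\mathrm{L}^2)$, any Leray solution lies in $\mathscr{C}^0(\R_+^\star;\H^1)$, and the energy inequality produces a time $t^\star\in(0,T)$ with $\|u(t^\star)\|_{\H^1}^2\leq 3\E^\init/T$. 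One then restarts the problem at $t^\star$, checks that $\|f^\star\|_1$, $N_q(f^\star)$ and $\|u^\star\|_{\dot{\B}_{2,\infty}^{-3/2}}$ are controlled by the corresponding initial quantities (conservation of mass, propagation of $N_q$ via $u\in\Ll^1\mathrm{L}^\infty$, and a maximal-regularity estimate for the $\dot{\B}^{-3/2}_{2,\infty}$ seminorm), so that the smallness condition \eqref{hyp:smallnessBesov} implies Danchin's smallness condition for the shifted data. This also removes your stated difficulty about "Lagrangian analysis with a non-Lipschitz drift": Danchin's solution on $[t^\star,\infty)$ has $\nabla u\in\mathrm{L}^1(\mathrm{L}^\infty)$, so the characteristics are classical there.

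A second, related gap is the closing of the global fixed point. What is actually needed is smallness of $\big\|\int_{t^\star}^t e^{(t-s)\Delta}\P F(s)\,\dd s\big\|_{\dot{\mathcal{B}}_p(+\infty)}$, and by Lemma~\ref{lem:brinkBp} this reduces to controlling $\|F\|_{\mathrm{L}^2_t\mathrm{L}^{3/2}}$ and $\|F\|_{\mathrm{L}^1_t\mathrm{L}^3}$ uniformly in time. The paper obtains these by interpolating Danchin's specific decay estimates ($t^{-5/4}$ decay of $\|F(t)\|_1$, a uniform $\mathrm{L}^\infty$ bound, and $\int_0^\infty t^{9/4}\|F(t)\|_2^2\,\dd t<\infty$); your asserted rate $\|u(t)\|_2\lesssim (1+t)^{-3/4}$ and the claim that Duhamel "improves each estimate by a strict factor" do not, as stated, yield integrability of these two norms on $[t^\star,\infty)$, and establishing them without the $H^1$ restart is precisely the hard part you defer to "the technical heart". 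As written, the proposal therefore does not close.
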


\subsubsection{Stability}

Lastly, we propose a quantitative formulation of our Theorem~\ref{thm:univns} by means of a stability estimate. Of course, such estimate implies uniqueness but this result is obtained at the cost of an extra condition on the initial data (decay in space and velocity variables).

Below (see Theorem~\ref{thm:stabvns}), we will need to evaluate the distance between two initial (kinetic) data and this will be done using the following norm.

\begin{defi}\label{def:NR}
Fix $T>0$ and $R>0$. Consider
$$
\B_R := \Big\{ u \in \mathrm{L}^1(0,T;\mathrm{L}^\infty(\R^3)) \cap \mathrm{L}^2(0,T;\H^1(\R^3)),\, \div\, u=0, \, \|u \|_{\mathrm{L}^1(0,T;\mathrm{L}^\infty(\R^3))}\leq R\Big\}.
$$
For $f^\init\in \mathrm{L}^1(\R^3\times \R^3)$ such that $|v|f^\init\in \mathrm{L}^1(\R^3\times \R^3)$ and  $N_q(f^\init)<+\infty$ for some \textcolor{red}{$q>5$}, we introduce 
  \begin{align*}
    \mathcal{N}_{T,R}(f^\init) := \sup_{u\in\B_R} \sup_{t\in[0,T]} \Big\{\|m_0 |f_u|(t)\|_{\mathrm{L}^1(\R^3)\cap\mathrm{L}^\infty(\R^3)} + \|m_1|f_u|(t)\|_{\mathrm{L}^1(\R^3)\cap\mathrm{L}^\infty(\R^3)}\Big\},
  \end{align*}
where $f_u(t): = \Z_u(t)\# \overline{f}$ is the push-foward measure of $f^\init$ by the Lagrangian flow $\Z_u$ defined by the vector field $b(t,x,v):=-(v,u(t,x)-v)$.
\end{defi}
We will show in an appendix (see Lemma~\ref{lem:NR}) that Definition~\ref{def:NR} indeed makes sense in the context of Leray solutions to the VNS system (the quantity $\mathcal{N}_{T,R}(f^\init)$ is actually well-defined under the weaker assumption that $q>4$).
In this very last setting, note that $f_u$ is then the unique solution in $\mathrm{L}^\infty(0,T;\mathrm{L}^\infty(\R^3\times\R^3))$ \added{of}
\begin{equation}\label{eq:transport-vlasov}\partial_t f+ v \cdot\nabla_x f + \nabla_v\cdot[f(u-v)] = 0,\end{equation}
with initial value $f^\init$ at $t=0,$ is a consequence of the Diperna-Lions theory. More precisely, since $u \in \B_R$, setting $b(t,x,v)=-(v,u(t,x)-v)$ and $c(t,x)=3$ in \cite[Th. II.2]{dipernalions}, we observe that $b$ and $c$ satisfy the uniqueness assumptions of that theorem because $(t,x,v)\mapsto b(t,x,v)(1+|(x,v)|)^{-1}$ belongs to $\mathrm{L}^1(0,T;\mathrm{L}^\infty(\R^3\times\R^3))$.

Moreover, the Diperna-Lions theory also ensures in \cite[Th. III.2]{dipernalions} the existence and uniqueness of a Lagrangian flow $\Z_u(s,t,x,v)$ associated with $b$. In particular, writing $\Z_u=(\X_u,\V_u)$ we have for $a.e. s\in [0,T]$
\begin{equation}
\label{eq:ODEs}
\left\{
\begin{aligned}
&\frac{\dd}{\dd s}\X_u(s,t,x,v) = \V_u(s,t,x,v), \qquad \X_u(t,t,x,v)=x, \\
&\frac{\dd}{\dd s}\V_u(s,t,x,v) = u(s,x,v)- \V_u(s,t,x,v), \qquad \V_u(t,t,x,v)=v.
\end{aligned}
\right.
\end{equation}
Finally, by \cite[Th. III.2]{dipernalions}, the unique solution in $\mathrm{L}^\infty(0,T;\mathrm{L}^\infty(\R^3\times \R^3))$ (in the sense of distributions) to the transport equation
$$\partial_t g+ (v,u(x)-v) \cdot\nabla_{x,v} g = 0,\quad g(0)=f^\init$$
is given by $g(t,x,v)=f^\init(\Z_u(0,t,x,v)$. Setting $f_u(t)=e^{3t}g(t)$, it is clear that $f_u$ is a distributional solution in $\mathrm{L}^\infty(0,T;\mathrm{L}^\infty(\R^3\times \R^3))$ to \eqref{eq:transport-vlasov} with $f_u(0)=f^\init$, which is unique as recalled above. Thus it follows in particular that
$$f_u(t) = \Z_u(t,0) \# f^\init,
$$ since the right-hand side is also a distributional solution to \eqref{eq:transport-vlasov}.
In the following, the flow $\Z_u := (\X_u,\V_u)$ will often be referred to as the characteristic curves associated with $u$. 

\begin{thm}\label{thm:stabvns}
Consider $(f^\init_1,u^\init_1)$ and $(f^\init_2,u^\init_2)$ two admissible initial data such that $M_6 f^\init_k < +\infty$ and $N_q(f^\init_k)<+\infty$ for some $q>5$ and $k=1,2$. Assume furthermore $u^\init_1$ belongs to $\H^s(\R^3)$ for some $s>0$
. 
There exists a continuous function $\Psi$ such that the following holds. 
For $T>0$, assume that a Leray solution $(f_1,u_1)$ associated with $(f^\init_1,u^\init_1)$ such that $u_1$ is well-approximated, exists.
For any $\ep \in (0,1)$, there exists $\textnormal{C}>0$ such that, if
$$
\|u_1^\init-u_2^\init\|_2 + \mathcal{N}_{T,R}(f^\init_1-f^\init_2) \leq \Psi^{1/\ep}(T, u_1, (\E^\init_{k})_{k=1,2}, (M_6 f^\init_{k})_{k=1,2}, (N_q(f^\init_k))_{k=1,2}),
$$
the following estimate holds for any Leray solution $(f_2,u_2)$ associated with $(f^\init_2,u^\init_2)$:
  \begin{align*}
    \|u_1(t)-u_2(t)\|_2^2 + \int_0^t \|\nabla (u_1-u_2)(s)\|_2^2\,\dd s    \leq \textnormal{C} \Big[\|u_1^\init-u_2^\init\|_2^{2} + \mathcal{N}_{T,R}(f^\init_1-f^\init_2)^{2}\Big]^{1-\ep}.
  \end{align*}
  If furthermore $f^\init_1$ and $f^\init_2$ share the same mass, then we have (for a possibly different constant $\textnormal{C}$)
  \begin{multline*}
\W_1(f_1(t),f_2(t))^2+    \|u_1(t)-u_2(t)\|_2^2 + \int_0^t \|\nabla (u_1-u_2)(s)\|_2^2\,\dd s\\
    \leq \W_1(f^\init_1,f^\init_2)^2+ \textnormal{C} \Big[\|u_1^\init-u_2^\init\|_2^{2} + \mathcal{N}_{T,R}(f^\init_1-f^\init_2)^{2}\Big]^{1-\ep},
  \end{multline*}
  where $\W_1$ denotes the 1-Wasserstein distance. 
\end{thm}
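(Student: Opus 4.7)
The plan is to combine a weak-strong energy estimate for $w := u_1 - u_2$ with a Lagrangian/Wasserstein comparison on the kinetic side, both glued together by Chemin's dyadic mollification device through the well-approximated property of $u_1$. Since $u_1$ is not strong in the classical sense, one replaces it throughout by $u_1^j := u_1 \star \psi_j$ and absorbs the resulting commutator errors via the second clause of Definition~\ref{def:well}, choosing $j$ optimally at the end. On the Navier--Stokes side, I would compute $\tfrac12\|u_1\|_2^2 + \tfrac12\|u_2\|_2^2 - \int u_1 \cdot u_2\,\dd x$ by testing $u_1^j$ in the weak formulation for $u_2$ and symmetrically; the convective nonlinearities then produce the usual trilinear form plus a tensorization-versus-convolution commutator of size $2^{-\alpha j}$. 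The Brinkman force difference further splits as $-\int \rho_{f_2}|w|^2\,\dd x$ (a favorable dissipative term) plus a remainder controlled by $\|w\|_2$ coupled with the moments $m_0|f_1-f_2|$ and $m_1|f_1-f_2|$.

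These moments are handled on the Vlasov side through the DiPerna--Lions representation $f_k(t) = \Z_{u_k}(t,0)\# f_k^\init$ and the decomposition
\begin{equation*}
f_1(t) - f_2(t) = \Z_{u_1}(t,0)\#(f_1^\init - f_2^\init) + \bigl[\Z_{u_1}(t,0)\# f_2^\init - \Z_{u_2}(t,0)\# f_2^\init\bigr].
\end{equation*}
The first summand has its moments bounded directly by $\mathcal{N}_{T,R}(f_1^\init - f_2^\init)$, once one verifies that $u_1 \in \B_R$ for some $R$ depending on the sequence $(a_k)_k$ (using the first well-approximation clause together with the Leray energy). The second summand is estimated by the flow discrepancy $\|\Z_{u_1}(t,0)-\Z_{u_2}(t,0)\|_{\mathrm{L}^\infty}$ integrated against moments of $f_2^\init$, where the strict hypothesis $q > 5$ serves precisely to make the resulting $m_1$-type integrals converge. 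From \eqref{eq:ODEs} and the split $u_1 = u_1^j + (u_1 - u_1^j)$, the flow discrepancy satisfies a Gronwall-type inequality whose forcing is $\|\nabla u_1^j(s)\|_\infty \cdot |\X_{u_1} - \X_{u_2}|$ (Lipschitz loss of the mollified part), $\|(u_1 - u_1^j)(s)\|_\infty$ (approximation error), and an $\mathrm{L}^2$-type norm of $w$ tied back to the Navier--Stokes energy through a change of variables against $f_2^\init$.

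Setting $\Phi(t) := \|w(t)\|_2^2 + \W_1(f_1(t),f_2(t))^2$, the previous two steps combine into
\begin{equation*}
\Phi(t) + \int_0^t \|\nabla w\|_2^2\,\dd s \leq \Phi(0) + C\,\mathcal{N}_{T,R}(f_1^\init - f_2^\init)^2 + C\!\int_0^t G_j(s)\Phi(s)\,\dd s + C\,2^{-\alpha j},
\end{equation*}
with $\int_0^T G_j \lesssim \sum_{k\leq j} a_k$ by the first well-approximation clause. Gronwall produces a prefactor $\exp\bigl(C\sum_{k\leq j} a_k\bigr)$, and Chemin's balancing (roughly, matching $2^{-\alpha j}$ to $\Phi(0) + \mathcal{N}_{T,R}^2$ at the scale dictated by the smallness threshold) converts the $\mathrm{c}_0$-decay of $(a_k)$ into the cosmetic loss $1-\ep$ in the exponent, under the smallness condition on $\Psi^{1/\ep}$. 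The Wasserstein addendum in the second bound is obtained from the triangle-type inequality $\W_1(f_1(t),f_2(t)) \leq \W_1(f_1^\init,f_2^\init) + \|\Z_{u_1}(t,0) - \Z_{u_2}(t,0)\|_{\mathrm{L}^\infty}$, valid when $f_1^\init$ and $f_2^\init$ share the same mass. The principal difficulty lies exactly in this closing step: one must reconcile simultaneously the at-most-sublinear-in-$j$ divergence of $\sum a_k$, the commutator gain $2^{-\alpha j}$, and the polynomial-in-$t$ moment growth governed by $N_q$, all within a single Gronwall loop that is not of Osgood type. This is precisely where Chemin's approximation-then-optimization strategy substitutes for the unavailable Osgood theorem and is responsible for the cosmetic $\ep$-loss.
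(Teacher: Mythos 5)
Your overall architecture is the right one and matches the paper's: mollify $u_1$ into $u_{1\star j}=u_1\star\psi_j$, run a weak--strong energy estimate for $w=u_1-u_2$ with the commutator errors absorbed by the second well-approximation clause, decompose the kinetic difference as $\Z_{u_1}\#(f_1^\init-f_2^\init)$ (controlled by $\mathcal{N}_{T,R}$) plus a flow-discrepancy term, and close with Chemin's optimization in $j$, which produces the $\ep$-loss. However, there is a genuine gap in how you propose to control the flow discrepancy and the Wasserstein distance. You run the kinetic Gr\"onwall loop in $\mathrm{L}^\infty_{x,v}$: the quantity $\|\Z_{u_1}(t,0)-\Z_{u_2}(t,0)\|_{\mathrm{L}^\infty}$, with forcing terms including $\|(u_1-u_1^j)(s)\|_\infty$ and, implicitly, $u_1(s,\X_{u_1})-u_2(s,\X_{u_2})$ taken uniformly in $(x,v)$. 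Neither is available: the well-approximation property bounds $\int_0^T\|u_1\star\psi_j\|_\infty^2$ (and even that only by the possibly divergent partial sums $\sum_{k\le j}a_k$, so it does not by itself put $u_1$ in $\B_R$ --- that comes from Lemma~\ref{lem:reg} via $M_6 f^\init<\infty$), it says nothing about $\|u_1-u_1\star\psi_j\|_\infty$, and $w$ lives only in the energy space, so $\|w\|_\infty$ is inaccessible. This is exactly why the paper measures the flow discrepancy with the weighted-$\mathrm{L}^2$ Loeper functional $Q_{2;\Z_1,\Z_2}=\iint f_2^\init|\Z_1-\Z_2|^2$, interleaves the smooth flow $\Z_{1\star j}$ so that Lemma~\ref{lem:dynQ} applies to each of $Q_{2;\Z_1,\Z_{1\star j}}$ and $Q_{2;\Z_{1\star j},\Z_2}$, and lets the errors enter only through $\|u_1-u_{1\star j}\|_2^2$ and $\|m_0 f\|_\infty\|w\|_2^2$ --- quantities that are themselves part of the same Gr\"onwall loop via Lemma~\ref{lem:dynw}.

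The same problem contaminates your Wasserstein step. Gr\"onwalling $\Phi(t)=\|w\|_2^2+\W_1(f_1(t),f_2(t))^2$ directly cannot work, and the inequality $\W_1(f_1(t),f_2(t))\le\W_1(f_1^\init,f_2^\init)+\|\Z_{u_1}-\Z_{u_2}\|_{\mathrm{L}^\infty}$ is doubly unusable: besides the $\mathrm{L}^\infty$ issue, the transport bound $\W_1(\Z\#\mu,\Z\#\nu)\lesssim\W_1(\mu,\nu)$ requires $\Z$ Lipschitz, and $\Z_{u_1}$ --- the flow of a mere Leray velocity --- is not. The paper's fix is to convert $Q$ into $\W_1$ only \emph{a posteriori}, after the Gr\"onwall loop is closed on $\widetilde{Q}_j+\|w\|_2^2$, by applying Proposition~\ref{prop:norm-W} twice with the \emph{mollified} flow $\Z_{1\star j}$ (which is Lipschitz, with a constant depending on the now-fixed $j$) as the regular curve, and then using Cauchy--Schwarz to bound $\iint|\Z_1-\Z_2|\,\dd f_2^\init$ by $Q^{1/2}$ times the mass. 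You should restructure your kinetic estimates around the weighted functional $Q$ and the interleaved smooth flow; the fluid side and the final optimization in $j$ are otherwise essentially correct.
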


\begin{rem}
This statement can be understood as a local stability estimate, in the sense that the initial data must be close enough.
  The closedness condition  can actually be made more precise from a view of the proof: 
  \begin{itemize}
      \item the dependence on $u_1$ is only through its well-approximated property, precisely through the sequence $(a_k)_{k\in \N}$ that appears therein;
      \item in particular, when $u_1$ belongs to the Besov space $\dot{\mathbb{B}}_p(T)$ (this implies the well-approximation property, see Lemma~\ref{prop:Besovwell}) the dependence on $u_1$ is only through its $\dot{\mathbb{B}}_p(T)$ norm.
      \item for any $T>0$ and $u_1$, we have
      $\Psi(T,u_1,0)=0$.
  \end{itemize}
  
\end{rem}

\subsection{Plan of the paper}

This paper is structured as follows. In Section~\ref{sec:uncoupled}, we study the evolution of the two key functionals allowing to evaluate the distance between two solutions, as introduced in the 2D case \cite{hkm3}. We argue as if the Navier-Stokes part and the Vlasov part were independent,  assume that one of the two solutions is attached with a smooth fluid velocity field (see in particular the upcoming Assumptions~\ref{ass:3}--\ref{ass:4}), but also allow for error terms.

Section~\ref{sec:proof1} is then dedicated to the proof of our uniqueness result, namely Theorem~\ref{thm:univns}. We consider a  sequence of smooth approximations of one of the two fluid velocity fields and thus are able to leverage the stability estimates from Section~\ref{sec:uncoupled}. All extra terms due to the approximation are considered as error terms. The whole point is to ensure that this procedure indeed converges and this is where the well-approximated property (recall Definition~\ref{def:well}) comes into play. This argument is strongly inspired by the work of Chemin \cite{chemincpam} for the Navier-Stokes equation. The subsequent Section~\ref{sec:proof4} provides a proof of Theorem~\ref{thm:stabvns}, that is the stability result for the VNS system, taking exactly the same route.

Next, Sections~\ref{sec:proof2} and~\ref{sec:proof3} are concerned with the application of our results to fluid data in Besov spaces.
We first apply our uniqueness result to prove local well-posedness in  Cannone-Meyer-Planchon spaces, which corresponds to Theorem~\ref{thm:BesovVNS}. In a second time, relying on the recent results of Danchin~\cite{Danchin}, we explain how small initial data give rise to global solutions, yielding a proof for Theorem~\ref{thm:BesovVNS:glob}.

The paper ends with three appendices.  In Appendix~\ref{sec:lp}, we gather useful elements related to the Besov spaces considered in this work. In particular, we explain in Section~\ref{sec:besapp}
 why the vector fields in the space $\dot{\mathbb{B}}_p(T)$ are indeed well-approximated. We collected in Appendix~\ref{sec:leray+} improved regularity properties satisfied by Leray solutions, as soon as the kinetic initial condition enjoys extra decay with respect to velocity. Lastly in Appendix~\ref{sec:contwas} we explain how  Loeper's functional can control the Wasserstein distance between two measures provided one of the two flows is regular enough.

\bigskip

\noindent {\bf Acknowledgements.} We warmly thank Isabelle Gallagher for inspiring discussion.

\section{Uncoupled stability estimates}
\label{sec:uncoupled}

\subsection{Assumptions and statements}
The proof of Theorem~\ref{thm:univns} relies on two independent building blocks, which are the objects of Lemmata \ref{lem:dynw} and \ref{lem:dynQ} below. Each of them evaluate the distance between two pairs $(\widetilde{f},\widetilde{u})$ and $(f,u)$, focusing on only one equation of the VNS system (the "fluid" part for Lemma \ref{lem:dynw} and the "kinetic" part for Lemma \ref{lem:dynQ}). We thus call these results \emph{uncoupled stability estimates}. 

In all this paragraph, the functions $\widetilde{f}$ and $f$ will always originate from the same initial data $f^\init$ through the push-forward of some (Lagrangian) flows $\widetilde{\Z},\Z: \R_+\times\R^3\times\R^3\rightarrow\R^3\times\R^3$, that is to say $$\widetilde{f}(t) =  \widetilde{\Z}(t)\# f^\init,\quad f(t) = \Z(t) \# f^\init.$$ 
In order to ease the presentation we will simply write $\rho$ and $j$ for the two first moments $\rho_f$ and $j_f$, and similary $\widetilde{\rho}$ and $\widetilde{j}$. Lastly, just as in \cite{hkm3} it will be convenient to measure the distance between these two kinetic components through the following functional
\begin{align}\label{def:Q}
Q_{\widetilde{\Z},\Z}(t) := \iint_{\R^3\times\R^3} f^\init |\widetilde{\Z}(t)-\Z(t)|^2.
\end{align}
Our results demand strong a priori assumptions (mostly on the fluid component) that are listed below. 

\begin{ass}\label{ass:1}
  For $k\in\{0,1,2\}$, $m_k \widetilde{f}$ and $m_k f$ belong to $\Ll^\infty(\R_+;\mathrm{L}^\infty(\R^3))$.
\end{ass}
  \begin{ass}\label{ass:2}
    $\widetilde{\Z}-\Z$ belongs to $\mathrm{L}^\infty(\R_+;\mathrm{L}^\infty(\R^3\times\R^3))$.
  \end{ass}
  \begin{ass}\label{ass:3}
    $\widetilde{u}$ belongs to $\Ll^2(\R_+;\mathrm{L}^\infty(\R^3))$.
  \end{ass}
  \begin{ass}\label{ass:4}
    $\widetilde{u}$ belongs to $\Ll^1(\R_+;\W^{1,\infty}(\R^3))$.
    \end{ass}
The first result evaluates the distance between two hypothetical solutions of the incompressible Navier-Stokes equation, supplied by a (perturbation of a) Brinkman-like force, assuming that the kinetic components, even though originating from the same initial data $f^\init$, are not \emph{a priori} linked to the fluid vector fields. In particular, the ODE satisfied by the trajectories $t\mapsto \Z(t)$ and $t\mapsto \widetilde{\Z}(t)$ are not involved in the following Lemma.
\begin{lem}\label{lem:dynw}
  We consider two pairs $(\widetilde{f},\widetilde{u})$ and $(f,u)$ such that $f=\textnormal{Z}\# f^\init$ and $\widetilde{f}=\widetilde{\textnormal{Z}}\#f^\init$ satisfying all four Assumptions~\ref{ass:1}, \ref{ass:2}, \ref{ass:3} and \ref{ass:4}. Fix a source term $G\in\Ll^2(\R_+;\dot{\H}^{-1}(\R^3))$, and assume that $\widetilde{u}$ and $u$ are incompressible Leray solutions of 
    \begin{align*}
      \partial_t \widetilde{u} + \widetilde{u}\cdot \nabla \widetilde{u} - \Delta \widetilde{u} + \nabla \widetilde{p}  &= \widetilde{j} - \widetilde{\rho} \,\widetilde{u} + G,\\
                                                                                                                              \partial_t u + u \cdot \nabla u - \Delta u + \nabla p  &= j - \rho u.
    \end{align*}
Then for $w:=\widetilde{u}-u$ and any $T>0$, there holds for $t\in [0,T]$
   \begin{multline*}
\|w(t)\|_2^2 + \int_0^t \|\nabla w(s)\|_2^2\,\dd s \lesssim \|w(0)\|_2^2 \\+ A(T)\int_0^t \big[1 +\|\nabla \widetilde{u}(s)\|_\infty\big]\|w(s)\|_2^2\,\dd s \\ + A(T)^2 \int_0^t \big[1+\|\widetilde{u}(s)\|_\infty^2+\|\nabla \widetilde{u}(s)\|_\infty\big]Q_{\widetilde{\Z},\Z}(s)\,\dd s + A(T)\int_0^t \|G(s)\|_{\dot{\H}^{-1}(\R^3)}^2\,\dd s, 
\end{multline*}
where the constant behind $\lesssim$ is universal and the constant $A(T)$ is explicitely
\begin{equation}\label{def:A}
 1+\sup_{s\in[0,T]} \Big\{\|\widetilde{\Z}(s)-\Z(s)\|_\infty^2 + \| m_2 \widetilde{f}(s)\|_\infty + \| m_2 f(s)\|_\infty + \| m_0 f(s)\|_\infty \Big\}.
\end{equation}
\end{lem}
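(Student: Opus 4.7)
The plan is to test the difference equation for $w := \widetilde{u} - u$ against $w$ itself and to control the resulting Brinkman-force discrepancy via Lagrangian calculus. Subtracting the two momentum equations and exploiting $\div\, u = 0$ to kill the term $\int w\cdot(u\cdot\nabla w)$, one obtains
\[
\tfrac{1}{2}\tfrac{\dd}{\dd t}\|w\|_2^2 + \|\nabla w\|_2^2 = -\!\int (w\cdot\nabla \widetilde{u})\cdot w \,\dd x + I + \int w\cdot G \,\dd x,
\]
with $I := \int w\cdot[(\widetilde{j}-\widetilde{\rho}\widetilde{u}) - (j-\rho u)]\,\dd x$. The convection piece is bounded by $\|\nabla \widetilde{u}\|_\infty\|w\|_2^2$ (using Assumption~\ref{ass:4}), and the source contribution by $\H^1$–$\dot{\H}^{-1}$ duality gives $\tfrac{1}{4}\|\nabla w\|_2^2 + \|G\|_{\dot{\H}^{-1}}^2$. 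Everything then boils down to $I$.

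Using the identities $\widetilde{j}-\widetilde{\rho}\widetilde{u} = \int(v-\widetilde{u})\widetilde{f}\,\dd v$ and $j-\rho u = \int(v-u)f\,\dd v$ together with $f=\Z\#f^{\init}$ and $\widetilde{f}=\widetilde{\Z}\#f^{\init}$, one rewrites
\[
I = \int \big[w(\widetilde{\X})(\widetilde{\V}-\widetilde{u}(\widetilde{\X})) - w(\X)(\V-u(\X))\big]f^{\init}.
\]
Substituting $u(\X) = \widetilde{u}(\X) - w(\X)$ isolates a nonpositive drag $-\int|w(\X)|^2 f^{\init}$ that can be discarded; adding and subtracting $w(\widetilde{\X})(\V-\widetilde{u}(\X))$ in what remains produces three pieces, $L_1 := \int w(\widetilde{\X})(\widetilde{\V}-\V)f^{\init}$, $L_2 := -\int w(\widetilde{\X})(\widetilde{u}(\widetilde{\X})-\widetilde{u}(\X))f^{\init}$, and $L_3 := \int [w(\widetilde{\X})-w(\X)](\V-\widetilde{u}(\X))f^{\init}$. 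The first two yield to Cauchy–Schwarz in $\mathrm{L}^2(f^{\init})$: exploiting $\int|w(\widetilde{\X})|^2 f^{\init} = \int|w|^2\widetilde{\rho} \leq \|\widetilde{\rho}\|_\infty\|w\|_2^2$ and, for $L_2$, the Lipschitz bound $|\widetilde{u}(\widetilde{\X})-\widetilde{u}(\X)| \leq \|\nabla \widetilde{u}\|_\infty |\widetilde{\X}-\X|$, both are controlled by $\|\widetilde{\rho}\|_\infty^{1/2}(1+\|\nabla\widetilde{u}\|_\infty)\|w\|_2\,Q_{\widetilde{\Z},\Z}^{1/2}$ (up to universal constants), and Young produces contributions to $\|w\|_2^2$ and $Q_{\widetilde{\Z},\Z}$ of the advertised form.

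The genuine obstacle is $L_3$, because $w$ lives only in $\H^1(\R^3)$ and a Lipschitz pointwise bound on $w(\widetilde{\X}) - w(\X)$ is unavailable. My plan is to rely on the classical pointwise maximal function inequality $|w(a) - w(b)| \lesssim |a-b|\,(\mathrm{M}|\nabla w|(a) + \mathrm{M}|\nabla w|(b))$, valid a.e. on $\R^3$ for $w \in \H^1$, where $\mathrm{M}$ is the Hardy–Littlewood maximal operator. Applied at $(a,b) = (\widetilde{\X}, \X)$ and combined with Cauchy–Schwarz, this gives
\[
|L_3| \lesssim Q_{\widetilde{\Z},\Z}^{1/2}\bigg(\int \big[\mathrm{M}|\nabla w|(\widetilde{\X})^2 + \mathrm{M}|\nabla w|(\X)^2\big]\,|\V-\widetilde{u}(\X)|^2 f^{\init}\bigg)^{1/2}.
\]
After splitting $|\V-\widetilde{u}(\X)|^2 \lesssim |\V|^2 + \|\widetilde{u}\|_\infty^2$, the pieces where $\mathrm{M}|\nabla w|$ is evaluated at $\X$ pushforward cleanly to $\int (\mathrm{M}|\nabla w|)^2 m_2 f$ and $\|\widetilde{u}\|_\infty^2\!\int(\mathrm{M}|\nabla w|)^2 \rho$. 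For the mixed contribution where the weight carries $|\V|^2$ but $\mathrm{M}|\nabla w|$ sits at $\widetilde{\X}$, Assumption~\ref{ass:2} allows to trade $|\V|^2 \lesssim |\widetilde{\V}|^2 + \|\widetilde{\Z}-\Z\|_\infty^2$ so as to recover a pushforward under $\widetilde{\Z}$. The $\mathrm{L}^2$-boundedness of $\mathrm{M}$ together with Assumption~\ref{ass:1} and the definition of $A(T)$ then bounds the whole integral by $A(T)^2(1+\|\widetilde{u}\|_\infty^2)\|\nabla w\|_2^2$, and one Young's inequality absorbs $\tfrac{1}{4}\|\nabla w\|_2^2$ into the left-hand side while producing the $A(T)^2(1+\|\widetilde{u}\|_\infty^2)Q_{\widetilde{\Z},\Z}$ residual predicted by the statement. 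Gathering all contributions and integrating in time on $[0,t]$ concludes the proof.
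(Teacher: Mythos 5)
Your proof is correct and follows essentially the same route as the paper's: the same energy identity, the same Lagrangian rewriting of the Brinkman discrepancy via the push-forward by $\Z$ and $\widetilde{\Z}$, a three-term decomposition after discarding the sign-definite drag $-\int |w(\X)|^2 f^\init$, and the pointwise Hardy--Littlewood maximal-function inequality to handle the increment $w(\widetilde{\X})-w(\X)$, followed by Young's inequality and absorption of $\|\nabla w\|_2^2$. The only (inconsequential) difference is that your mirror-image splitting evaluates $w$ at $\widetilde{\X}$ rather than $\X$ in $L_1$ and $L_2$, so your final constant also involves $\|m_0\widetilde{f}\|_\infty$, which is finite by Assumption~\ref{ass:1} but does not literally appear in the $A(T)$ of \eqref{def:A}; this does not affect how the lemma is used, since in Section~\ref{sec:proof1} the constant is anyway enlarged to \eqref{eq:vraiA}, which contains both $\|m_0 f_1\|_\infty$ and $\|m_0 f_2\|_\infty$.
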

The second building block relates the distance between the two kinetic components $f=\textnormal{Z}\# f^\init$ and $\widetilde{f}=\widetilde{\textnormal{Z}}\#f^\init$ to the distance between the two vector fields defining the Lagrangian flows $\textnormal{Z}$ and $\widetilde{\textnormal{Z}}$. This time, no solution of the Navier-Stokes equations is involved \emph{a priori}, while only the dynamics of the Lagrangian trajectories are used.
\begin{lem}\label{lem:dynQ}
Consider two pairs $(\widetilde{f},\widetilde{u})$ and $(f,u)$, such that Assumption~\ref{ass:1} holds together with Assumption~\ref{ass:2} and Assumption~\ref{ass:4}. Assume furthermore that $\widetilde{\Z}$ and $\Z$ are respectively the flows associated to the vector fields $(t,x,v) \mapsto (v,{u}(t,x)-v)$ and $(v,\widetilde{u}(t,x)-v)$. Then, for $w:=u_1-u_2$, there holds
  \begin{align*}
Q_{\widetilde{\Z},\Z}(t) \lesssim \int_0^t \Big[1+  \|\nabla \widetilde{u}(s)\|_\infty \Big] Q_{\widetilde{\Z},\Z}(s)\,\dd s  + \int_0^t \|m_0 f(s)\|_\infty \|w(s)\|_2^2\,\dd s.
  \end{align*}
\end{lem}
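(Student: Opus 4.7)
The natural approach is to differentiate the functional $Q_{\widetilde{\Z},\Z}(t)$ in time and then estimate the resulting integrand. Writing $\widetilde{\Z}=(\widetilde{\X},\widetilde{\V})$ and $\Z=(\X,\V)$, the ODE system defining the flows gives
\[
\partial_t(\widetilde{\Z}-\Z) = \bigl(\widetilde{\V}-\V,\; \widetilde{u}(t,\widetilde{\X})-u(t,\X)-(\widetilde{\V}-\V)\bigr),
\]
(at least for a.e.\ $(x,v)$, by the DiPerna-Lions theory applied under Assumption~\ref{ass:4}). Taking the scalar product with $\widetilde{\Z}-\Z$, integrating against $f^{\mathrm{in}}$, and noting that the squared term $|\widetilde{\V}-\V|^2$ carries a favorable sign, I would get
\[
\tfrac12 \tfrac{\mathrm d}{\mathrm dt} Q_{\widetilde{\Z},\Z}(t) = \iint f^{\mathrm{in}} (\widetilde{\X}-\X)\cdot(\widetilde{\V}-\V) + \iint f^{\mathrm{in}} (\widetilde{\V}-\V)\cdot[\widetilde{u}(t,\widetilde{\X})-u(t,\X)] - \iint f^{\mathrm{in}}|\widetilde{\V}-\V|^2.
\]
The first term is controlled by $Q_{\widetilde{\Z},\Z}$ via Young's inequality, and the last is dropped.

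The core step is to handle the middle term, by splitting
\[
\widetilde{u}(t,\widetilde{\X})-u(t,\X) = \bigl[\widetilde{u}(t,\widetilde{\X})-\widetilde{u}(t,\X)\bigr] + w(t,\X),
\]
with $w:=\widetilde{u}-u$. For the first bracket I would invoke Assumption~\ref{ass:4} to get the Lipschitz bound $|\widetilde{u}(t,\widetilde{\X})-\widetilde{u}(t,\X)|\le \|\nabla \widetilde{u}(t)\|_\infty|\widetilde{\X}-\X|$; paired with $|\widetilde{\V}-\V|$ through Young's inequality, this contributes a term $\|\nabla\widetilde{u}(t)\|_\infty\, Q_{\widetilde{\Z},\Z}(t)$.

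For the second bracket, I would apply Young's inequality once more to obtain, up to an absorbable $Q_{\widetilde{\Z},\Z}(t)$ contribution, the quantity
\[
\iint f^{\mathrm{in}}(x,v)\, |w(t,\X(t,x,v))|^2 \,\mathrm dx\,\mathrm dv.
\]
Here the key trick is the pushforward identity: since $f(t)=\Z(t)\#f^{\mathrm{in}}$ and $w$ depends only on the space variable, a direct change of variables rewrites this as $\int |w(t,x)|^2 m_0 f(t,x)\,\mathrm dx$, which is bounded by $\|m_0 f(t)\|_\infty \|w(t)\|_2^2$. This is exactly the second term of the claimed estimate, and uses Assumption~\ref{ass:1} only indirectly (to ensure well-definedness of the moments appearing in the output).

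Assembling the three contributions produces the pointwise-in-time differential inequality
\[
\tfrac{\mathrm d}{\mathrm dt} Q_{\widetilde{\Z},\Z}(t) \lesssim \bigl[1+\|\nabla\widetilde{u}(t)\|_\infty\bigr] Q_{\widetilde{\Z},\Z}(t) + \|m_0 f(t)\|_\infty\|w(t)\|_2^2,
\]
which after integration in time and use of $Q_{\widetilde{\Z},\Z}(0)=0$ yields the claimed estimate (no Grönwall step is needed at this stage since the statement is purely integral). The main obstacle I foresee is the rigorous justification of the time differentiation: since the flows are only defined in the DiPerna-Lions sense, one would either regularize the velocity fields and pass to the limit using Assumptions~\ref{ass:2}--\ref{ass:4} for uniform control, or work directly with the Lagrangian representation and use the absolute continuity in time of each characteristic for a.e.\ $(x,v)$. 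Assumption~\ref{ass:2} is what ensures $Q_{\widetilde{\Z},\Z}$ is finite for all times and that the above integrals converge.
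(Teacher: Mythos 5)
Your proposal is correct and follows essentially the same route as the paper: differentiate $Q_{\widetilde{\Z},\Z}$, drop the negative term $-\iint f^\init|\widetilde{\V}-\V|^2$, split $\widetilde{u}(\widetilde{\X})-u(\X)$ into a Lipschitz increment of $\widetilde{u}$ plus $w(\X)$, and convert $\iint f^\init |w(\X)|^2$ into $\int m_0 f\,|w|^2$ by the pushforward identity. The only difference is your (welcome but inessential) extra care about justifying the time differentiation in the DiPerna--Lions framework.
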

\begin{rem}
It is clear from the above statements that Assumption~\ref{ass:1} could be substantially relaxed.
\end{rem} 
\subsection{Proof of Lemma~\ref{lem:dynw}}
  \begin{proof}
For $q:=\widetilde{p}-p$ and $(\widetilde{F},F):=(\widetilde{j}-\widetilde{\rho}\,\widetilde{u},j-\rho u)$, there holds
    \begin{align*}
      \partial_t w + u\cdot \nabla w - \Delta w + \nabla q = - w \cdot \nabla \widetilde{u} + \widetilde{F}-F + G,
     \end{align*}
which, in the setting of Leray solutions, leads to the following distributional inequality 
     \begin{align*}
       \frac12 \frac{\dd}{\dd t}\|w(t)\|_2^2 + \|\nabla w(t)\|_2^2 
       \leq \int_{\R^3} |w \cdot w \cdot \nabla \widetilde{u}| +  \int_{\R^3} w \cdot (\widetilde{F}-F) + \int_{\R^3} w\cdot G\quad.
     \end{align*}
Thanks to the regularity assumption on $\widetilde{u}$, we infer 
     \begin{equation}\label{ineq:w}
       \frac12 \frac{\dd}{\dd t}\|w(t)\|_2^2 +\|\nabla w(t)\|_2^2 \leq \|w(t)\|^2 \|\nabla \widetilde{u}(t)\|_\infty + \stackrel{:=I}{\overbrace{\int_{\R^3} w \cdot (\widetilde{F}-F)}} + \int_{\R^3} w \cdot G\quad.
     \end{equation}
Let's omit for a moment the notation of the time variable $t$. The integral $I$ can be written in the following way using the definition of the Brinkman forces $\widetilde{F}$ and $F$
     \begin{align*}
I =  \iint_{\R^3\times\R^3} f^\init w(\widetilde{\X})\cdot (\widetilde{\V}-\widetilde{u}(\widetilde{\X})) - \iint_{\R^3\times\R^3} f^\init w(\X)\cdot (\V-u(\X)), 
     \end{align*}
where we introduced the decomposition $\widetilde{\Z} = (\widetilde{\X},\widetilde{\V})$ and $\Z = (\X,\V)$ together with a small abuse of notation, writing $\widetilde{u}(\widetilde{\X})$ and $u(\X)$ for respectively $\widetilde{u}(t,\widetilde{\X})$ and $u(t,\X)$. We have then
     \begin{multline*}
I = \iint_{\R^3\times\R^3} f^\init (w(\widetilde{\X})-w(\X))\cdot (\widetilde{\V}-\widetilde{u}(\widetilde{\X}))  - \iint_{\R^3\times\R^3} f^\init w(\X)\cdot (\widetilde{u}(\widetilde{\X})-\widetilde{\V}+\V-u(\X)),
\end{multline*}
and since $w=\widetilde{u}-u$
\begin{multline}\label{ineq:I}
I \leq  \iint_{\R^3\times\R^3} f^\init (w(\widetilde{\X})-w(\X))\cdot (\widetilde{\V}-\widetilde{u}(\widetilde{\X}))  \\ + \iint_{\R^3\times\R^3} f^\init w(\X) \cdot (\widetilde{\V}-\V) \\ + \iint_{\R^3\times\R^3} f^\init w(\X)\cdot (\widetilde{u}(\X)-\widetilde{u}(\widetilde{\X})).
\end{multline}
Let's recall a useful estimate involving the maximal function, that was already crucial in our previous work \cite{hkm3}. For $x,y\in\R^3$ and any function $\ffi\in\H^1(\R^3)$, there holds
\[|\ffi(x)-\ffi(y)|\lesssim |x-y| \big[M|\nabla \ffi|(x) + M|\nabla \ffi|(y)\big],\]
for a constant behind $\lesssim$ which depends only on the dimension. In particular, we get from \eqref{ineq:I}
\begin{multline*}
I \lesssim  \iint_{\R^3\times\R^3} f^\init |\widetilde{\X}-\X| |\widetilde{\V}| \big[M|\nabla w|(\widetilde{\X}) + M|\nabla w|(\X)\big] \\ + \iint_{\R^3\times\R^3} f^\init|\widetilde{\X}-\X||\widetilde{u}(\widetilde{\X})| \big[M|\nabla w|(\widetilde{\X}) + M|\nabla w|(\X)\big] \\ + \iint_{\R^3\times\R^3} f^\init w(\X) \cdot (\widetilde{\V}-\V)  + \iint_{\R^3\times\R^3} f^\init w(\X)\cdot (\widetilde{u}(\X)-\widetilde{u}(\widetilde{\X})).
\end{multline*}
Using that $\widetilde{\Z}-\Z$ belongs to $\Ll^\infty(\R_+;\mathrm{L}^\infty(\R^3\times\R^3))$ and $\widetilde{u}\in\Ll^1(\R_+;\W^{1,\infty}(\R^3))$ by assumption, we have
\begin{multline*}
I \lesssim  \iint_{\R^3\times\R^3} f^\init |\widetilde{\X}-\X| |\widetilde{\V}| M|\nabla w|(\widetilde{\X}) + \iint_{\R^3\times\R^3} f^\init |\widetilde{\X}-\X| (\|\widetilde{\Z}-\Z\|_\infty+|\V|)M|\nabla w|(\X)\big] \\ + \iint_{\R^3\times\R^3} f^\init|\widetilde{\X}-\X| |\widetilde{u}(\widetilde{\X})| \big[M|\nabla w|(\widetilde{\X}) + M|\nabla w|(\X)\big] \\ + \iint_{\R^3\times\R^3} f^\init |w(\X)| |\widetilde{\V}-\V|  + \iint_{\R^3\times\R^3} f^\init  \|\nabla \widetilde{u}\|_\infty |w(\X)| |\X-\widetilde{\X}|,
\end{multline*}
where except $f^\init$, every function involved in the right hand side depends on the time variable. For any $\ep \in (0,1)$, we infer from Young's inequality, writing simply $Q$ for $Q_{\widetilde{\Z},\Z}$, we get using Young's inequality 
\begin{multline*}
  I \lesssim  \ep^{-1}\big[1+\|\widetilde{\Z}-\Z\|_\infty^2 + \|\widetilde{u}\|_\infty^2\big] Q   + \big[1+\|\nabla\widetilde{u}\|_\infty\big]Q \\+ \ep \big[\|m_2 \widetilde{f}\|_\infty + \|m_2 f\|_\infty +1\big] \| M\nabla w\|_2^2  \\  + \|m_0 f\|_\infty\big[1 + \|\nabla \widetilde{u}\|_\infty\big] \| w\|_2^2.
\end{multline*}
Adding back the time variable $t$ that we omitted before and recalling the definition \eqref{def:A} of $A(T)$, the stability of the maximal function on $\mathrm{L}^2(\R^3)$ allows us to write for yet another universal symbol $\lesssim$, since $\ep\in(0,1)$
\begin{equation*}
  I(t) \lesssim  \ep^{-1}\big[A(T) + \|\widetilde{u}\|_\infty^2 + \|\nabla \widetilde{u}\|_\infty\big] Q + \ep A(T) \| \nabla w\|_2^2  + A(T)\big[1 + \|\nabla \widetilde{u}\|_\infty\big] \| w\|_2^2.
\end{equation*}
Going back to \eqref{ineq:w} and using once more Young's inequality for the same arbitrary $\ep\in (0,1)$ to handle the term involving $G$ we have 
\begin{multline*}
      \frac12 \frac{\dd}{\dd t}\|w(t)\|_2^2 +\|\nabla w(t)\|_2^2 
      \leq I(t) + \|w(t)\|^2 \|\nabla \widetilde{u}(t)\|_\infty + \frac{\ep}{2} \|\nabla w(t)\|_2^2 +  \frac{\ep^{-1}}{2}\|G(t)\|_{\dot{\H}^{-1}(\R^3)}^2.
\end{multline*}
All in all, since $A(T)\geq 1$,  we have
\begin{multline*}
      \frac12 \frac{\dd}{\dd t}\|w(t)\|_2^2 +\|\nabla w(t)\|_2^2 \lesssim  \ep^{-1}\big[A(T) + \|\widetilde{u}\|_\infty^2 + \|\nabla \widetilde{u}\|_\infty\big] Q\\ \hspace{4cm} + \ep A(T) \| \nabla w\|_2^2 + \ep^{-1} \|G(t)\|_{\dot{\H}^{-1}(\R^3)}^2 \\ + A(T)\big[1 + \|\nabla \widetilde{u}\|_\infty\big] \| w\|_2^2.
 \end{multline*}
 Choosing $\ep$ small enough we recover, for another universal symbol $\lesssim$, we get
 \begin{multline*}
      \frac12 \frac{\dd}{\dd t}\|w(t)\|_2^2 +\|\nabla w(t)\|_2^2 \lesssim  A(T)^2 \big[1 + \|\widetilde{u}\|_\infty^2 + \|\nabla \widetilde{u}\|_\infty\big] Q\\  +A(T) \|G(t)\|_{\dot{\H}^{-1}(\R^3)}^2  + A(T)\big[1 + \|\nabla \widetilde{u}\|_\infty\big] \| w\|_2^2,
 \end{multline*}
which is exaclty the estimate written in differential form. $\qedhere$
\end{proof}
\subsection{Proof of Lemma~\ref{lem:dynQ}}
\begin{proof}
  We have, writing simply $Q$ for $Q_{\widetilde{\Z},\Z}$, by a direct computation
  \begin{multline*}
Q'(t) = \iint_{\R^3\times\R^3} f^\init (\widetilde{\X}^t-\X^t)\cdot (\widetilde{\V}^t-\V^t) \\+ \iint_{\R^3\times\R^3} f^\init (\widetilde{\V}^t-\V^t)\cdot (\widetilde{u}(\widetilde{\X}^t)-u(\X^t)) - \iint_{\R^3\times\R^3} f^\init |\widetilde{\V}^t-\V^t|^2.
             \end{multline*}
Thanks to the Young inequality we deduce that
             \begin{equation*}
Q'(t) \lesssim Q(t) + \iint_{\R^3\times\R^3} f^\init |\widetilde{\V}^t -\V^t|\,|\widetilde{u}(\widetilde{\X}^t)-\widetilde{u}(\X^t)| + \iint_{\R^3\times\R^3} f^\init |\widetilde{u}(\X^t)-u(\X^t)|^2, 
\end{equation*}
and thus
\begin{align*}
Q'(t) \lesssim  \Big[1+\|\nabla \widetilde{u}(t)\|_\infty\Big] Q(t)  + \int_{\R^3} m_0 f (t)|(\widetilde{u}-u)(t)|^2,
     \end{align*}
     hence the conclusion.
   \end{proof}

           \section{Proof of Theorem~\ref{thm:univns}}
\label{sec:proof1}
             We consider two Leray solutions $(f_1,u_1)$ et $(f_2,u_2)$ of the VNS sharing the same initial data $(f^\init,u^\init)$, with $(x,v)\mapsto |v|^6 f^\init(x,v)$ integrable. Thanks to Lemma~\ref{lem:reg}, we have $m_\ell f_k \in\Ll^\infty(\R_+;\mathrm{L}^\infty(\R^3))$ for $(\ell,k)\in\{0,1,2\}\times\{1,2\}$. We use the  notation $F_k := j_k - u_k \rho_k$ for the two associated Brinkman forces and similarly, we write $\Z_k:=(\X_k,\V_k)$ for the characteristic curves associated with the vector field $(t,x)\mapsto (v,u_k(t,x)-v)$. Just as for the proof of \cite[Lemma 4.13]{hkm3}, we note that
             \[|\V_1(t,x,v) - \V_2(t,x,v)|\leq \int_0^t |u_1(s,\X_1(s))|\,\dd s + \int_0^t |u_2(s,\X_2(s))|\,\dd s,\]
             from which we infer (by direct integration) $\Z_1-\Z_2\in\Ll^\infty(\R_+;\mathrm{L}^\infty(\R^3\times\R^3))$, using point $(i)$ of Lemma~\ref{lem:reg}. This justifies therefore \textbf{Assumption \ref{ass:2}} if we aim to use Lemma~\ref{lem:dynw} to evaluate the distance between two fluid/kinetic pairs involving $f_1$ and $f_2$ respectively and this is precisely what we're about to do.

             \medskip
             
             Without loss of generality, we assume that $u_1$ is well-approximated by a smooth dyadic approximate identity $(\psi_j)_j$. A direct computation shows that $u_{1\star j}:=u_1\star \psi_j$ solves
\begin{align}\label{eq:u1j}
 \partial_t u_{1\star j}  + u_{1\star j} \cdot \nabla u_{1\star j} - \Delta u_{1\star j} +\nabla p_1\star \psi_j = j_1 -\rho_1 u_{1\star j} + G_j, 
\end{align}
where the error term $G_j$ is given by 
\begin{align}\label{def:Gj}
  G_j:= F_1 \star\psi_j - F_1  - \rho_1 (u_1 -u_1 \star \psi_j) + u_1 \star \psi_j \cdot \nabla (u_1 \star \psi_j) - (u_1 \cdot \nabla u_1)\star \psi_j.\end{align}
Since $\rho_k$ and $j_k$ all belong to $\Ll^\infty(\R_+;\mathrm{L}^\infty(\R^3))$ for $k=1,2$, and $u_1\star\psi_j$ belongs to $\Ll^2(\R_+;\W^{1,\infty}(\R^3))$, we can invoke Lemma~\ref{lem:dynw} for the fluid/kinetic pairs $(f_1,u_{1\star j})$ and $(f_1,u_1)$ (of course $u_1\star\psi_j$ plays the role of the smooth function). Note here that in this $\widetilde{\Z} = \Z = \Z_1$ (and thus $Q_{\widetilde{\Z},\Z}=0$) so for $T>0$ and $t\in[0,T]$ we have 
\begin{multline}
\label{ineq:w1j}\|(u_1-u_{1\star j})(t) \|_2^2 + \int_0^t \|\nabla (u_1-u_{1\star j})(s)\|_2^2\,\dd s \lesssim \|(u_1-u_{1\star j})(0)\|_2^2 \\+ A(T) \int_0^t \big[1 +\|\nabla u_{1\star j}(s)\|_\infty\big]\|(u_1-u_{1\star j})(s)\|_2^2\,\dd s  + \int_0^t \|G_j(s)\|_{\dot{\H}^{-1}(\R^3)}^2\,\dd s,
\end{multline}
where
\begin{align*}
A(T) := 1 + \sup_{s\in[0,T]} \Big\{ \|m_2 f_1(s)\|_\infty + \|m_0 f_1(s)\|_\infty \Big\}.
\end{align*}
Now, let's proceed in the same fashion to measure the gap $u_1\star \psi_j-u_2 = u_{1\star j}-u_2$, using Lemma~\ref{lem:dynw} for the pairs $(f_1,u_{1\star j})$ and $(f_2,u_2)$. Of course, once again,  $u_{1\star j}$ plays the role of the smooth function. But this time $Q_{\widetilde{\Z},\Z} = Q_{\Z_1,\Z_2}$ has a non-trivial contribution in the estimate (note that we verified above that \textbf{Assumption \ref{ass:2}} is valid). More precisely, we have
   \begin{multline}\label{ineq:u1ju2}
\|(u_{1\star j}-u_2)(t)\|_2^2 + \int_0^t \|\nabla (u_{1\star j}-u_2)(s)\|_2^2\,\dd s \lesssim \|(u_{1\star j}-u_2)(0)\|_2^2 \\+ A(T)\int_0^t \big[1 +\|\nabla u_{1\star j}(s)\|_\infty\big]\|(u_{1\star j}-u_2)(s)\|_2^2\,\dd s \\ + A(T)^2 \int_0^t \big[1+\|u_{1\star j}(s)\|_\infty^2+\|\nabla u_{1\star j}(s)\|_\infty\big]Q_{\Z_1,\Z_2}(s)\,\dd s\\ + A(T)\int_0^t \|G_j(s)\|_{\dot{\H}^{-1}(\R^3)}^2\,\dd s, 
\end{multline}
where the constant behind $\lesssim$ is universal and the constant $A(T)$ is updated with the following value
\begin{multline}
 \label{eq:vraiA} 1+\sup_{s\in[0,T]} \Big\{\|\Z_1(s)-\Z_2(s)\|_\infty^2 + \| m_2 f_1(s)\|_\infty + \| m_2 f_2(s)\|_\infty + \| m_0 f_1(s)\|_\infty +\| m_0 f_2(s)\|_\infty \Big\}.
\end{multline}
For later use, let us remark that by Lemma~\ref{lem:reg}, we have the bound
\begin{equation}
\label{eq:boundA}
    A(T) \leq \Psi_0(T, (E^\init_k)_{k=1,2}, (M_6 f^\init_k)_{k=1,2}, (N_q (f^\init_k))_{k=1,2}),
\end{equation}
where $\Psi_0$ is a continuous function.

Now, to close the argument we need to include the dynamic of $Q_{\Z_1,\Z_2}$ in the estimate. However, we cannot directly use Lemma~\ref{lem:dynQ} because neither $u_1$ or $u_2$ has sufficient regularity to ensure \textbf{Assumption \ref{ass:4}}. For this reason, we introduce the lagrangian trajectory $\Z_{1\star j}$ associated to the regular vector field $(t,x,v)\mapsto (v,u_{1\star j}(t,x)-v)$ and first estimate $Q_{\Z_1,\Z_{1\star j}}$ and $Q_{\Z_{1\star j},\Z_2}$. Lemma~\ref{lem:dynQ} can indeed be used to control these two functionals (using each time $\Z_{1\star j}$ for the smooth function $\widetilde{\Z}$) and thus, for some universal symbol $\lesssim$ and $t\in[0,T]$
  \begin{align*}
    Q_{\Z_1,\Z_{1\star j}}(t) &\lesssim \int_0^t \Big[1+  \|\nabla u_{1\star j}(s)\|_\infty \Big] Q_{\Z_1,\Z_{1\star j}}(s)\,\dd s  + A(T)\int_0^t  \|(u_1-u_{1\star j})(s)\|_2^2\,\dd s\\
    Q_{\Z_{1\star j},\Z_2}(t) &\lesssim \int_0^t \Big[1+  \|\nabla u_{1\star j}(s)\|_\infty \Big] Q_{\Z_{1\star j},\Z_2}(s)\,\dd s  + A(T)\int_0^t \|(u_{1\star j}-u_2)(s)\|_2^2\,\dd s.
  \end{align*}
  We're eventually reaching the end of our computation. We note 
  that $Q_{\Z_1,\Z_2} \leq 2 \widetilde{Q}_j := Q_{\Z_1,\Z_{1\star j}} + Q_{\Z_{1\star j},\Z_2}$. Then, we define \begin{align}\label{eq:defmu}\mu_j(t) &:= \|(u_1-u_{1\star j})(t)\|_2^2 +\|(u_{1\star j}-u_2)(t)\|_2^2,\\ \label{eq:defnu} \nu_j(t)&:=\|\nabla (u_1-u_{1\star j})(t)\|_2^2 +\|\nabla (u_{1\star j}-u_2)(t)\|_2^2.\end{align} Lastly, we simply add up the two estimates above on $Q_{\Z_1,\Z_{1\star j}}$ and $Q_{\Z_{1\star j},\Z_2}$ together with \eqref{ineq:w1j} and \eqref{ineq:u1ju2} and recover
  \begin{multline*}
\widetilde{Q}_j(t) + \mu_j(t) + \int_0^t \nu_j(s)\,\dd s \lesssim \mu(0)  + A(T)\int_0^t \|G_j(s)\|_{\dot{\H}^{-1}(\R^3)}^2\,\dd s \\\hspace{2cm}+ A(T) \int_0^t \big[ 1+ \|\nabla u_{1\star j}(s)\|_\infty\big] \mu_j(s)\,\dd s \\ + A(T)^2 \int_0^t \big[1+\|u_{1\star j}(s)\|_\infty^2 +\|\nabla u_{1\star j}(s)\|_\infty\big] \widetilde{Q}_j(s)\,\dd s.
  \end{multline*}
  Using $A(T)\geq 1$, this estimates entails the following Gr\"onwall-friendly estimate for $\xi_j(t) := \widetilde{Q}_j(t) + \mu_j(t)$
  \begin{multline*}\xi_j(t) + \int_0^t \nu_j(s)\,\dd s \lesssim \xi_j(0) + A(T)^2 \int_0^t \|G_j(s)\|_{\dot{\H}^{-1}(\R^3)}^2\,\dd s \\+ A(T)^2 \int_0^t \big[1+\|u_{1\star j}(s)\|_\infty^2 + \|\nabla u_{1\star j}(s)\|_\infty\big] \xi_j(s)\,\dd s,\end{multline*}
  which thus imply, for some universal constant $\C$
  \begin{align*}
\xi_j(t) + \int_0^t \nu_j(s)\,\dd s \leq \exp(\C A(T)^2 B_j(T)) \left(\xi_j(0)+\int_0^t \|G_j(s)\|_{\dot{\H}^{-1}(\R^3)}^2\,\dd s \right). 
  \end{align*}
where
\[B_{j}(T) := 1+T+\int_0^T \left(\|u_{1\star j}(s)\|_\infty^2+ \|\nabla u_{1\star j}(s)\|_\infty\right)\,\dd s.\]
Recalling the definition of $\mu_j$ and $\nu_j$ in \eqref{eq:defmu} -- \eqref{eq:defnu} and using $u_1(0) = u_2(0) = u^\init$ we eventually have by triangular inequality for $w=u_1-u_2$
  \begin{multline}\label{ineq:qz1z2}
Q_{\Z_1,\Z_2}(t) + \|w(t)\|_2^2+ \int_0^t \|\nabla w(s)\|_2^2 \,\dd s \\\leq \exp(\C A(T)^2 B_j(T)) \left[\|u^\init-u^\init\star \psi_j\|_2^2 +\int_0^t \|G_j(s)\|_{\dot{\H}^{-1}(\R^3)}^2\,\dd s \right],
  \end{multline}
  where $B_j(T)$ is defined just as above and $A(T)$ by \eqref{eq:vraiA}.

  \begin{rem}\label{rem:gronstab}
A close look of the previous proof leading to \eqref{ineq:qz1z2} shows that we control actually more than merely $Q_{\Z_1,\Z_2}$ : the Grönwall estimate is obtained on $\xi_j = \widetilde{Q}_j+\mu_j$ so we actually also control  $Q_{\Z_1,\Z_{1\star j}}$ and $Q_{\Z_{1\star j},\Z_2}$. This remark will be useful in the proof of Theorem~\ref{thm:stabvns}.
  \end{rem}

  Now, the whole point is to understand how the competition ends between the exponential term (which diverges) and the bracket one (which goes to $0$), as $j\rightarrow +\infty$. Let's deal first with the second one and prove that it converges exponentially to $0$ that is, for some $\beta>0$, it behaves asymptotically as $\textnormal{O}(e^{-\beta j})$. For the term $\|u^\init - u^\init \star \psi_j\|_2^2$, this follows directly from the fact that $u^\init\in\H^s(\R^3)$ for some $s>0$: since $(\psi_{j})_j$ is a dyadic extraction of an approximate identity, a standard result on convolution ensures for some $s>0$ that $\|u^\init - u^\init \star \psi_j \|_2^2 \lesssim 4^{-s j} = e^{-s\ln(4)j}$. For $G_j$, we recall its definition \eqref{def:Gj} that we reproduce here
\[G_j = \stackrel{G_j^1}{\overbrace{F_1 \star \psi_j-F_1}} - \stackrel{G_j^2}{\overbrace{\rho_1(u_1-u_{1\star j})}} + \stackrel{G_j^3}{\overbrace{u_{1\star j}\cdot \nabla u_{1\star j} - (u_1 \cdot\nabla u_1) \star \psi_j}}.\] 
For $G_j^1$, since $f^\init$ has finite moments up to the sixth order, we can do just as in \cite[Lemma 4.7]{ehkm} to check that $F_1$ belongs to $\Ll^2(\R_+;\mathrm{L}^2(\R^3))$. Then, as before (but with shifted regularity), we have $\|F_1(s)\star \psi_j -F_1(s)\|_{\dot{\H}^{-1}(\R^3)} \leq 2^{-j} \|F_1(s)\|_2$ and the exponential decay follows. Almost the same story happens for $G_j^2$ using the Sobolev embedding $\dot{\H}^1(\R^3)\hookrightarrow\mathrm{L}^6(\R^3)$
$\|G_j^2(s)\|_{\dot{\H}^{-1}(\R^3)} \leq \|G_j^2(s)\|_{6/5} \leq \|\rho_1(s)\|_3 \|u_1(s)-u_1(s)\star \psi_j\|_{2} \leq A(T) 2^{-j}\|\nabla u_1(s)\|_2$ which allows to conclude because $u_1$ belongs to $\Ll^2(\R_+;\H^1(\R^3))$, as any Leray solution. Lastly, due to the incompressibilty condition, $G_j^3$ equals  $\div(u_{1\star j} \otimes u_{1\star j} - (u_1\otimes u_1)\star \psi_j)$ where $u_{1\star j} = u_1 \star \psi_j$ so that
\[\int_0^T \|G_j^3(s)\|_{\dot{\H}^{-1}(\R^3)}^2\,\dd s \leq \int_0^T \|(u_1\star \psi_j)\otimes (u_1\star \psi_j)-(u_1\otimes u_1)\star \psi_j\|_2^2 \,\dd s.\]
Since $u_1$ is assumed to be well-approximated by $(\psi_j)_j$ in the sense of Definition~\ref{def:well}, we get the expected exponential decay.

\medskip

We have therefore established, up to an universal constant encoded in $\lesssim$, the following estimate for some $\beta >0$ 
  \begin{align*}
Q_{\Z_1,\Z_2}(t) + \|w(t)\|_2^2+ \int_0^t \|\nabla w(s)\|_2^2 \,\dd s \lesssim \exp(\C A(T)^2 B_j(T)) \exp(-\beta j).
  \end{align*}
But, using once more that $u_1$ is well-approximated, we have the existence of a positive sequence $(a_k)_k \in c_0(\N)$ such that
  \[B_j(T) \leq 1+ T +\sum_{k=0}^j a_k.\]
  For some integer $N$  we have $k> N\Rightarrow C A(T)^2 a_k<\beta/2$ which proves that, for $j> N$,
  \begin{align*}
    \exp(\C A(T)^2B_j(T)) \exp &(-\beta j) \\
    &= \exp\left(\C A(T)^2\sum_{k=0}^{N} a_k\right) \exp\left(\C A(T)^2\sum_{k=N+1}^j a_k\right) \exp(-\beta j) \\&\leq  \exp\left(\C A(T)^2\sum_{k=0}^{N} a_k\right) \exp(\beta(j-N)/2) \exp(-\beta j)\\
    &= \exp\left(\C A(T)^2\sum_{k=0}^{N} a_k\right) \exp(-\beta N/2) \exp(-\beta j/2) \conv{j}{+\infty} 0,
  \end{align*}
  which concludes the proof of Theorem~\ref{thm:univns}. $\qedhere$

  \section{Proof of Theorem~\ref{thm:stabvns}}
  \label{sec:proof4}
  
  The strategy is perfectly similar to the one followed for the proof of Theorem~\ref{thm:univns}. The estimate for $u_1-u_{1\star j}$ is rigorously the same, leading to \eqref{ineq:w1j}, where $G_j$ is given by \eqref{def:Gj}. For $u_{1\star j}-u_2$, there is an extra term coming from the fact that $f^\init_1\neq f^\init_2$. In order to invoke directly Lemma~\ref{lem:dynw}, we rewrite \eqref{eq:u1j} as 
  \begin{align*}
 \partial_t u_{1\star j}  + u_{1\star j} \cdot \nabla u_{1\star j} - \Delta u_{1\star j} +\nabla p_1\star \psi_j = j_{1\#2} -\rho_{1\#2} u_{1\star j} + G_j + H_j, 
\end{align*}
where $j_{1\#2}$ and $\rho_{1\#2}$ are the moments associated to $f_{1\#2}$, the push-forward of the initial data $f^\init_2$ through the $\Z_1$ trajectory, that is $f_{1\#2}(t) = \Z_1(t) \# f^\init_2$. Of course the extra error term $H_j$ is simply given by
\begin{align}\label{eq:hj}
  H_j = j_1-j_{1\#2} - (\rho_{1}-\rho_{1\#2})u_{1\star j}.
\end{align}
Now, we can directly use Lemma~\ref{lem:dynw} to evaluate the distance $u_{1\star j}-u_2$, taking $G_j+H_j$ as error term. Note that we invoke this lemma with the common initial data $f^\init_2$ for the kinetic components, so that the functional $Q_{\widetilde{\Z},\Z}(t)$ is here 
\begin{align*}
Q_{2;\Z_{1},\Z_2}(t) := \iint_{\R^3\times\R^3}  |\Z_{1}(t,x,v)-\Z_2(t,x,v)|^2f^\init_2(x,v)\,\dd x\, \dd v.
\end{align*}
We have therefore 
   \begin{multline*}
\|(u_{1\star j}-u_2)(t)\|_2^2 + \int_0^t \|\nabla (u_{1\star j}-u_2)(s)\|_2^2\,\dd s \lesssim \|(u_{1\star j}-u_2)(0)\|_2^2 \\+ A(T)\int_0^t \big[1 +\|\nabla u_{1\star j}(s)\|_\infty\big]\|(u_{1\star j}-u_2)(s)\|_2^2\,\dd s \\ + A(T)^2 \int_0^t \big[1+\|u_{1\star j}(s)\|_\infty^2+\|\nabla u_{1\star j}(s)\|_\infty\big]Q_{2;\Z_{1},\Z_2}(s)\,\dd s\\ + A(T)\int_0^t \|(G_j+H_j)(s)\|_{\dot{\H}^{-1}(\R^3)}^2\,\dd s. 
\end{multline*}
For the dynamics of $Q_{2;\Z_{1},\Z_2}$, we proceed as we have done in the proof of Theorem~\ref{thm:univns} interleaving the smooth trajectory $\Z_{1\star j}$ in between $\Z_1$ and $\Z_2$, and recovering instead \eqref{ineq:qz1z2}, the following estimate 
  \begin{multline}\label{ineq:qz1z2w}
Q_{2;\Z_{1},\Z_2}(t) + \|w(t)\|_2^2+ \int_0^t \|\nabla w(s)\|_2^2 \,\dd s \\\leq \exp(\C A(T)^2 B_j(T)) \left[\|u^\init_1-u^\init_1\star \psi_j\|_2^2 +\int_0^t \|G_j(s)\|_{\dot{\H}^{-1}(\R^3)}^2\,\dd s \right] \\
    + \exp(\C A(T)^2 B_j(T)) \left[\|u^\init_1-u^\init_2\|_2^2 +\int_0^t \|H_j(s)\|_{\dot{\H}^{-1}(\R^3)}^2\,\dd s \right].
  \end{multline}
  Now, as noticed in Remark~\ref{rem:gronstab}, a precise inspection of the proof gives actually more than this: in the previous estimate we can replace $Q_{2;\Z_1,\Z_2}$ by $Q_{2;\Z_{1},\Z_{1\star j}} +Q_{2;\Z_{1\star j},\Z_2}$ with the following definition of these functionals 
\begin{align}
\label{eq:Q2z1zj}  Q_{2;\Z_1,\Z_{1\star j}}(t) &:= \iint_{\R^3\times\R^3}  |\Z_1(t,x,v)-\Z_{1\star j}(t,x,v)|^2f^\init_2(x,v)\,\dd x\, \dd v,\\
  \label{eq:Q2z2zj}  Q_{2;\Z_{1\star j},\Z_2}(t) &:= \iint_{\R^3\times\R^3}  |\Z_{1\star j}(t,x,v)-\Z_2(t,x,v)|^2f^\init_2(x,v)\,\dd x\, \dd v,
\end{align}
where as before $\Z_{1\star j}$ denotes the lagrangian trajectory associated to the regular vector field $(t,x)\mapsto (v,u_{1\star j}(t,x)-v)$. Introducing the notation $\widetilde{Q}_j:=Q_{2;\Z_{1},\Z_{1\star j}} +Q_{2;\Z_{1\star j},\Z_2}$ we thus also have 
  \begin{multline}\label{ineq:qz1z2wow}
\widetilde{Q}_j(t) + \|w(t)\|_2^2+ \int_0^t \|\nabla w(s)\|_2^2 \,\dd s \\\leq \exp(\C A(T)^2 B_j(T)) \left[\|u^\init_1-u^\init_1\star \psi_j\|_2^2 +\int_0^t \|G_j(s)\|_{\dot{\H}^{-1}(\R^3)}^2\,\dd s \right] \\
    + \exp(\C A(T)^2 B_j(T)) \left[\|u^\init_1-u^\init_2\|_2^2 +\int_0^t \|H_j(s)\|_{\dot{\H}^{-1}(\R^3)}^2\,\dd s \right].
  \end{multline}
Let's now bound the right-hand side of \eqref{ineq:qz1z2wow}. For the $H_j$ term, owing to the Sobolev embedding ${\dot{\H}^{1}(\R^3)\hookrightarrow\mathrm{L}^{6}(\R^3)}$, we only need to control the $\mathrm{L}^{6/5}(\R^3)$ norm of $H_j$. Turning back to the definition of \eqref{eq:hj} and using $\frac56 = \frac12+\frac13$ we have 
  \begin{multline*}
 \int_0^t \|H_j(s)\|^2_{\mathrm{L}^{6/5}(\R^3)}\,\dd s 
    \leq \int_0^t \|j_1(s)-j_{1\#2}(s)\|_{\mathrm{L}^{6/5}(\R^3)}^2\,\dd s \\+\sup_{s\in[0,t]} \|u_1(s)\|_{\mathrm{L}^2(\R^3)}^2 \int_0^t \|\rho_1(s)-\rho_{1\#2}(s)\|_{\mathrm{L}^3(\R^3)}^2 \,\dd s.
  \end{multline*}
  Since $M_1 f_1^\init$, $M_1 f_2^\init$ and $N_q(f_1^\init-f_2^\init)$ are finite for some $q>4$, we recover from the Definition~\ref{def:NR} of $\mathcal{N}_{T,R}$
  \begin{align*}
    \int_0^t \|H_j(s)\|^2_{\mathrm{L}^{6/5}(\R^3)}\,\dd s &\leq T \mathcal{N}_{T,R}(f_1^\init-f_2^\init)^2  + T \sup_{s\in[0,t]} \|u_1(s)\|_{\mathrm{L}^2(\R^3)}^2 \mathcal{N}_{T,R}(f_1^\init-f_2^\init)^2\\
    &\lesssim \mathcal{N}_{T,R}(f_1^\init-f_2^\init)^2, 
\end{align*}
where the constant in $\lesssim$ depends only on $T$ and the $\mathrm{L}^\infty(0,T;\mathrm{L}^2(\R^3))$ norm of $u_1$, while its $\mathrm{L}^1(0,T;\mathrm{L}^\infty(\R^3)$ norm defines the value of $R$ in the norm $\mathcal{N}_{T,R}$. Now, up to a symbol $\lesssim$ which depends only on the initial data and $u_1$, we infer from \eqref{ineq:qz1z2wow} 
\begin{multline}\label{ineq:w:NR}
 \|w(t)\|_2^2+ \int_0^t \|\nabla w(s)\|_2^2 \,\dd s + \widetilde{Q}_j(t) \\\lesssim \exp(\C A(T)^2 B_j(T)) \left[\|u^\init_1-u^\init_1\star \psi_j\|_2^2 +\int_0^t \|G_j(s)\|_{\dot{\H}^{-1}(\R^3)}^2\,\dd s \right] \\
    + \exp(\C A(T)^2 B_j(T)) \Big[\|u^\init_1-u^\init_2\|_2^2 + \mathcal{N}_{T,R}(f_1^\init-f_2^\init)^2 \Big].
  \end{multline}
  Now, as we noticed in the proof of Theorem~\ref{thm:univns}, the (large) bracket in the second line of \eqref{ineq:w:NR} enjoys an exponential decay $\exp(-\beta j)$ for some $\beta>0$ which depends only on the Sobolev regularity of $u_1$ and the parameter $\alpha$ given by its well-approximation property. Also, because of this property, we have the existence of a positive sequence $(a_k)_k\in \textnormal{c}_0(\N)$ such that 
  \[B_j(T) \leq 1 + T + \sum_{k=0}^j a_k.\]
  All in all, we can summarize  the previous estimate \eqref{ineq:w:NR} as
  \begin{align}\label{ineq:wdeltaD}
 \|w(t)\|_2^2+ \int_0^t \|\nabla w(s)\|_2^2 \,\dd s+ \widetilde{Q}_j(t) \leq \delta_j + h_j \Delta^\init,
   \end{align}
  where
  \begin{align*}
\Delta^\init = \|u^\init_1-u^\init_2\|_2^2 + \mathcal{N}_{T,R}(f_1^\init-f_2^\init)^2,
  \end{align*}
  while $(\delta_j)_j$ and $(h_j)_j$ are two sequences of positive numbers (depending only on $u_1$ and the initial data) the first one decreasing to $0$, the second one increasing to $+\infty$ but dominated by the first one in the sense that $(\delta_j h_j^\gamma)_j \rightarrow 0$ for any $\gamma>0$. Note that $h_j$ can actually be chosen exponential: $h_{j+1}/h_j$ is thus bounded so that  (and this is crucial) we have in particular $(\delta_j h_{j+1}^\gamma)_j  \rightarrow 0$ for all $\gamma>0$.
  
  \medskip
  
  Now, fix $\ep>0$ as in the statement of Theorem~\ref{thm:stabvns}. We have thus in particular $\delta_j h_{j+1}^{\ep^{-1}(1-\ep)} \leq 1$ for $j\geq J$ large enough, and this threshold $J$ depends only on $u_1$ and the initial data. This threshold being fixed, we can now pick initial data $(f^\init_2,u_2^\init)$ close enough to $(f^\init_1,u_1^\init)$ so that $h_J \leq (\Delta^{\init})^{-\ep}$. This smallness constraint on $\Delta^\init$ is translated thanks to a function $\Psi$ as in the statement of Theorem~\ref{thm:stabvns}. The precise claimed dependence of $\Psi$ is obtained thanks to~\eqref{eq:boundA}.
  
  Now consider $\ell:=\max\{j\geq J\,: h_j\leq (\Delta^\init)^{-\ep}\}$. Of course, $h_{\ell+1}>(\Delta^\init)^{-\ep}$. Owing to this choice of $\ell$ we have 
    \begin{align*}
\delta_\ell + h_\ell \Delta^\init  &\leq (\delta_\ell h_{\ell+1}^{\ep^{-1}(1-\ep)}) h_{\ell+1}^{-\ep^{-1}(1-\ep)} + h_{\ell} \Delta^\init \\
&\leq h_{\ell+1}^{-\ep^{-1}(1-\ep)} + (\Delta^\init)^{1-\ep}\\
&\leq 2(\Delta^{\init})^{1-\ep},
  \end{align*}
  because $h_{\ell+1}> (\Delta^\init)^{-\ep}$. Using \eqref{ineq:wdeltaD}, we thus obtained for some large and fixed integer $j$ and some constant $\textnormal{C}$
    \begin{align}\label{ineq:wdeltaD:suite}
 \|w(t)\|_2^2+ \int_0^t \|\nabla w(s)\|_2^2 \,\dd s+\widetilde{Q}_j(t) \leq \textnormal{C} \Big[\|u^\init_1-u^\init_2\|_2^2 + \mathcal{N}_{T,R}(f_1^\init-f_2^\init)^2\Big]^{1-\ep}.
    \end{align}
    This already sets the proof of the first stability estimate stated in Theorem~\ref{thm:stabvns}. In the case when $f_1^\init$ and $f_2^\init$ share the same mass, we will use Proposition~\ref{prop:norm-W}. First, recall the definitions $\widetilde{Q}_j := Q_{2;\Z_{1},\Z_{1\star j}} +Q_{2;\Z_{1\star j},\Z_2}$ and \eqref{eq:Q2z1zj} -- \eqref{eq:Q2z2zj}. Since $\Z_{1\star j}$ is associated to the smooth (and now fixed) vector field $(t,x,v)\mapsto(v,u_{1\star j}(t,x)-v)$, we can invoke Proposition~\ref{prop:norm-W} to control squared Wasserstein distances in term of these functionals. More precisely, for $t\in[0,T]$ we introduce $f_{1\star j}(t):= \Z_{1\star j}(t)\# f^\init_1$ and use twice Proposition~\ref{prop:norm-W}, each time with $\Z_{1\star j}$ as the smooth (i.e. Lipschitz) curve, to write
 \begin{align*}
 \W_1(f_1(t),f_{1\star j}(t))^2 &\lesssim Q_{\Z_1,\Z_{1\star j}}(t),\\
 \W_1(f_{1\star j}(t),f_{2}(t))^2 &\lesssim \W_1(f_1^\init,f_2^\init)^2 + Q_{\Z_{1\star j},\Z_2}(t),
 \end{align*}
where $\lesssim$ depends only on the $\mathrm{L}^1([0,T];\W^{1,\infty}(\R^3))$ norm of $u_{1\star j}$ and the shared mass of $f^\init_1$ and $f^\init_2$. Note that on the first line, the right-hand side is simpler because $f_1$ and $f_{1\star j}$ share the same initial data. By triangular inequality we have \[\W_1(f_1(t),f_2(t))^2 \lesssim \W_1(f_1(t),f_{1\star j}(t))^2 + \W_1(f_{1_\star j}(t),f_2(t))^2 \lesssim \W_1(f^\init_1,f^\init_2)^2+\widetilde{Q}_j(t),\] so that the proof of the second stability estimate of Theorem~\ref{thm:stabvns} is over once added $\W_1(f_1^\init,f_2^\init)^2$ to both sides of \eqref{ineq:wdeltaD:suite}.
  
  
\section{Proof of Theorem~\ref{thm:BesovVNS}}
\label{sec:proof2}

Let $u^\init \in \dot{\mathbb{B}}_{p,\infty}^{-1+3/p}(\R^3)  \cap \H^s(\R^3)$ for some $s>0$ be a divergence-free vector field. Fix a Leray solution $(f,u)$ associated with the initial condition $(f^\init, u^\init)$. The aim is to prove that there exists $T>0$ small enough such that $u \in \dot{\mathbb{B}}_p(T)$. To this end, we shall start by establishing that for $T>0$ small enough, there exists a unique mild solution $v$ in $\dot{\mathbb{B}}_p(T)$ to the \emph{forced} Navier-Stokes equation
\begin{equation}
\label{eq:ns-forced}
 \partial_t v + \P\big[\textnormal{div}(v\otimes v)\big] - \Delta v = \P F, \quad v|_{t=0} = u^\init,
 \end{equation}
where the (fixed) forcing is precisely the Brinkmann force $F=j_f -\rho_f u$ 
associated with $(f,u)$. Then, by Chemin's uniqueness theorem (namely \cite[Theorem 1.9]{chemincpam}), as $u$ is a Leray solution to~\eqref{eq:ns-forced} on $[0,T]$, $u$ and $v$ must be equal and thus $u \in \dot{\mathbb{B}}_p(T)$. 
With this regularity, recalling Proposition~\ref{prop:Besovwell}, we can apply Theorem~\ref{thm:univns} and deduce the claimed uniqueness result.
 
Everything therefore comes down to the study of \eqref{eq:ns-forced}. We start with a general result for this forced Navier-Stokes equation. The Banach space $\dot{\textnormal{K}}_{p}(T):=\widetilde{\L}^4_T \dot{\B}_{p,\infty}^{-1/2+3/p}(\R^3)$ plays a role similar to $\mathrm{L}^4(0,T;\dot{\H}^1(\R^3))$ in the Kato setting for the Navier-Stokes equation. First, one checks indeed the continuous embedding $\dot{\mathcal{B}}_p(T)\hookrightarrow \dot{\K}_p(T)$. Second, as the $\dot{\K}_p(T)$ norm vanishes when $T\rightarrow 0$ (for elements in the closure of smooth functions), we plan to use the following general result to recover local existence for the VNS system.


  \begin{lem}
  \label{lem:fixedpoint}
  Fix $T>0$ and $p\in(3,\infty)$. Consider $u^\init$ and $G$ such that
  \begin{align}
S(u^\init,G):= e^{t\Delta} u^\init + \int_0^t e^{(t-s)\Delta}G(s)\,\dd s\in \dot{\K}_p(T).
  \end{align}
  There exists $\alpha_p>0$ for which, whenever
    \begin{align}\label{ineq:petite}
\|S(u^\init,G)\|_{\dot{\K}_p(T)} \leq \alpha_p,
    \end{align}
    then there exists a unique $u\in \B_{\dot{\K}_p(T)}(0,2\alpha_p)$ satisfying
    \begin{equation}
    \label{eq:NS-G}
        \partial_t v -\Delta v + \P\big[\textnormal{div}(v\otimes v)\big] = G, \quad u|_{t=0}=u^\init,
            \end{equation}
            in the mild sense. Moreover
            \begin{enumerate}
            \item[$(i)$] If $S(u^\init,G)$ belongs to $\dot{\mathbb{B}}_p(T)$ then so does $v$.
              \item[$(ii)$] If $u^\init$ belongs to $\mathrm{L}^2(\R^3)$ then $v$ is also a Leray solution.
                \end{enumerate}
    \end{lem}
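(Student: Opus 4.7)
The plan is to recast \eqref{eq:NS-G} in its mild form and apply a Picard-type fixed-point argument in the Banach space $\dot{\K}_p(T)$. Introducing the Navier-Stokes bilinear operator
\[
B(v,w)(t) := \int_0^t e^{(t-s)\Delta}\,\P\,\textnormal{div}(v\otimes w)(s)\,\dd s,
\]
the mild formulation of the forced equation reads $v = S(u^\init,G) - B(v,v)$, so I look for a fixed point of $\Phi(v) := S(u^\init,G) - B(v,v)$ in a closed ball of $\dot{\K}_p(T)$. The central ingredient is the classical Cannone-Meyer-Planchon bilinear estimate
\[
\|B(v,w)\|_{\dot{\K}_p(T)} \leq C_p \|v\|_{\dot{\K}_p(T)} \|w\|_{\dot{\K}_p(T)}, \qquad p\in(3,\infty),
\]
which follows from Littlewood-Paley decomposition combined with Bernstein-type inequalities and heat-kernel smoothing on homogeneous Besov spaces (see \cite{cmp, LR2002, bcd}). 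Setting $\alpha_p := (4C_p)^{-1}$, the standard quadratic-equation lemma (a direct consequence of Banach's fixed-point theorem applied to $\Phi$) yields a unique $v\in \mathrm{B}_{\dot{\K}_p(T)}(0, 2\alpha_p)$ solving the mild equation whenever $\|S(u^\init,G)\|_{\dot{\K}_p(T)} \leq \alpha_p$.

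For part $(i)$, I would supplement the previous bound with a bilinear estimate of the form $\|B(v,w)\|_{\dot{\mathbb{B}}_p(T)} \lesssim \|v\|_{\dot{\mathbb{B}}_p(T)} \|w\|_{\dot{\K}_p(T)} + \|v\|_{\dot{\K}_p(T)} \|w\|_{\dot{\mathbb{B}}_p(T)}$, which is again a Littlewood-Paley calculation making use of the structural definition of $\dot{\mathbb{B}}_p(T)$ recalled in Appendix~\ref{subsec:chemlen}. Starting from $v_0 := S(u^\init,G) \in \dot{\mathbb{B}}_p(T)$, the Picard iterates $v_{n+1} = S(u^\init,G) - B(v_n, v_n)$ then remain uniformly bounded in $\dot{\mathbb{B}}_p(T)$; since they also converge to $v$ in $\dot{\K}_p(T)$, a standard lower-semicontinuity argument on the Besov time norms places the limit $v$ in $\dot{\mathbb{B}}_p(T)$.

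Part $(ii)$ is the step I expect to be the most delicate. The idea is to show that when $u^\init\in \mathrm{L}^2(\R^3)$, the mild solution $v$ automatically inherits the Leray regularity $\mathrm{L}^\infty(0,T;\mathrm{L}^2(\R^3))\cap \mathrm{L}^2(0,T;\dot{\H}^1(\R^3))$ together with the energy inequality. On the one hand, the linear contribution $S(u^\init, G)$ is controlled in the energy space thanks to standard heat estimates (using that $G \in \mathrm{L}^2(0,T;\dot{\H}^{-1}(\R^3))$ is encoded in the assumption $S(u^\init, G)\in \dot{\K}_p(T)$). On the other hand, the bilinear term $B(v,v)$ can be estimated in the energy class by noticing that $v\in\dot{\K}_p(T)$ implies via Bernstein inequalities that $v\otimes v$ belongs to a space that the heat semigroup regularizes into $\mathrm{L}^\infty \mathrm{L}^2 \cap \mathrm{L}^2\dot{\H}^1$. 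An alternative — and perhaps cleaner — route would be to invoke the classical Leray construction to produce \emph{some} Leray solution $\widetilde{v}$ to \eqref{eq:NS-G}, observe that every Leray solution is mild (see \cite{LR2002}) and that, by interpolation, $\widetilde{v}$ lies in the small ball $\mathrm{B}_{\dot{\K}_p(T)}(0, 2\alpha_p)$ (possibly after restricting to a smaller time interval and then propagating), and finally apply the uniqueness statement already established to conclude $\widetilde{v} = v$. Either way, the energy inequality is obtained by a standard regularization and passage-to-the-limit argument.
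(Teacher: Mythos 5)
Your core fixed-point argument for existence and uniqueness in the small ball of $\dot{\K}_p(T)$ is exactly the paper's: the same mild formulation, the same bilinear estimate $\|\mathfrak{B}(v,w)\|_{\dot{\K}_p(T)}\lesssim\|v\|_{\dot{\K}_p(T)}\|w\|_{\dot{\K}_p(T)}$ (obtained in the paper from the paraproduct estimates plus the heat maximal regularity estimate of \cite{bcd}), and the same quadratic fixed-point lemma. That part is fine.

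For $(i)$, your route (a mixed bilinear estimate plus bounded Picard iterates and lower semicontinuity) has a gap at the last step: uniform boundedness of the iterates in $\dot{\mathbb{B}}_p(T)$ together with convergence in the weaker $\dot{\K}_p(T)$ norm only places the limit in $\dot{\mathcal{B}}_p(T)$, not in the closure-of-Schwartz subspace $\dot{\mathbb{B}}_p(T)$ --- the tail-vanishing property \eqref{eq:propBp}, which is precisely what distinguishes $\dot{\mathbb{B}}_p(T)$ from $\dot{\mathcal{B}}_p(T)$ and what the well-approximation property downstream relies on, does not pass to such limits (compare $c_0$ inside $\ell^\infty$). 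The fix is either to run the contraction in the $\dot{\mathcal{B}}_p(T)$ norm inside the closed subspace, or to do as the paper does: perform the fixed point in $\dot{\mathbb{K}}_p(T)$ (the closure of smooth functions in $\dot{\K}_p(T)$), which already contains $S(u^\init,G)$, and then upgrade the regularity of $\mathfrak{B}(v,v)$ a posteriori via the heat estimates.

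For $(ii)$, both of your proposed routes break down. In the first, the claim that $v\in\dot{\K}_p(T)$ implies via Bernstein inequalities that $v\otimes v$ lands in a space regularized into $\mathrm{L}^\infty\mathrm{L}^2\cap\mathrm{L}^2\dot{\H}^1$ is backwards: Bernstein's inequality only lets you pass from small to large Lebesgue exponents, so no amount of heat smoothing converts control in $\mathrm{L}^p$-based Besov spaces with $p>3$ (possibly $p\gg 2$) into $\mathrm{L}^2$-based energy bounds; recovering the energy class from $\dot{\mathcal{B}}_p(T)$ plus $u^\init\in\mathrm{L}^2$ is a genuinely nontrivial persistence result, not a Bernstein computation. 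In the second route, the assertion that a Leray solution lies in $\B_{\dot{\K}_p(T)}(0,2\alpha_p)$ ``by interpolation'' is false: interpolating $\mathrm{L}^\infty_T\mathrm{L}^2\hookrightarrow\widetilde{\L}^\infty_T\dot{\B}^{-3/2+3/p}_{p,\infty}$ with $\mathrm{L}^2_T\dot{\H}^1\hookrightarrow\widetilde{\L}^2_T\dot{\B}^{-1/2+3/p}_{p,\infty}$ only yields $\widetilde{\L}^4_T\dot{\B}^{-1+3/p}_{p,\infty}$, half a derivative short of $\dot{\K}_p(T)=\widetilde{\L}^4_T\dot{\B}^{-1/2+3/p}_{p,\infty}$, and even membership would not give smallness. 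The paper's actual argument reverses the logic: it mollifies the equation \`a la Leray, runs the very same fixed point on the mollified system (the estimates being uniform in the mollification parameter), so that each approximate solution is \emph{simultaneously} a Leray solution and an element of the small ball of $\dot{\K}_p(T)$; the weak limit is then both a Leray solution and in the small ball, hence equals $v$ by the uniqueness already proved. You should adopt that construction rather than trying to upgrade either $v$ or an arbitrary Leray solution after the fact.
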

\begin{proof}
A natural strategy is to interpret the mild solution $v$ as the fixed point of the operator 
  \begin{align}\label{eq:fix}
    v \mapsto S(u^\init,G) - \int_0^t e^{(t-s)\Delta} \mathbf{P}\big[\textnormal{div}(v\otimes v)\big]\,\dd s.
  \end{align}
  In order to use \cite[Lemma 5.5]{bcd}, we only need to check that the bilinear map
  \begin{align}\label{map:bil}
\mathfrak{B}:(w,v) \mapsto \int_0^t e^{(t-s)\Delta} \mathbf{P}\big[\textnormal{div}(w\otimes v)\big]\,\dd s,
  \end{align}
  sends continuously $\dot{\K}_p(T)\times \dot{\K}_p(T)$ to $\dot{\K}_p(T)$. Combining the paraproduct estimates given in \cite[Theorem 2.47]{bcd} and \cite[Theorem 2.52]{bcd}, one first checks that the pointwise product sends continuously $\dot{\B}^{-1/2+3/p}_{p,\infty}(\R^3)\times\dot{\B}^{-1/2+3/p}_{p,\infty}(\R^3)$ in $\dot{\B}^{-1+3/p}_{p,\infty}(\R^3)$. Then, for the time variable, a Hölder type estimate implies \begin{align}\label{ineq:contconv}\|\textnormal{div}(w\otimes v)\|_{\widetilde{\L}^2_T\dot{\B}^{-2+3/p}_{p,\infty}(\R^3)} \leq \|w\otimes v\|_{\widetilde{\L}^2_T\dot{\B}^{-1+3/p}_{p,\infty}(\R^3)} \lesssim \|w\|_{\dot{\K}_p(T)}\|v\|_{\dot{\K}_p(T)}.\end{align}
  Now, the expected continuity of the bilinear map $\mathfrak{B}$ defined in \eqref{map:bil} simply follows from estimate (3.39) of \cite[Subsection 3.4.1]{bcd}. Indeed with $(s,r)=(-1+3/p,\infty)$ and $(\rho,\rho_1) = (4,2)$, we get
  \begin{align*}
\|\mathfrak{B}(w,v)\|_{\dot{\K}_p(T)} = \left\| \int_0^t e^{(t-s)\Delta} \mathbf{P}\big[\textnormal{div}(u\otimes v)\big]\,\dd s \right\|_{\dot{\K}_p(T)} \lesssim \|\textnormal{div}(w\otimes v)\|_{\widetilde{\L}^2_T\dot{\B}^{-2+3/p}_{p,\infty}(\R^3)}.
  \end{align*}
With the continuity of the bilinear map $\mathfrak{B}$ at hand, we can directly use \cite[Lemma 5.5]{bcd} to get existence of a mild solution in $\dot{\K}_p(T)$, provided \eqref{ineq:petite} is satisfied. 

\medskip

To prove $(i)$, we only need to check \emph{a posteriori} that our fixed point $v$ satisfies $\mathfrak{B}(v,v) \in\dot{\mathbb{B}}_p(T)$. Recall that by Definition \ref{def:bpTp} this last space is the closure of smooth and rapidly decaying functions for the  $\dot{\mathcal{B}}_p(T) = \widetilde{\L}^\infty_T \dot{\B}_{p,\infty}^{-1+3/p}(\R^3) \cap \widetilde{\L}^1_T \dot{\B}_{p,\infty}^{1+3/p}(\R^3)$ norm. In particular, we infer from the continuous embedding $\dot{\mathcal{B}}_p(T)\hookrightarrow \dot{\K}_p(T)$ the analogous one $\dot{\mathbb{B}}_p(T)  \hookrightarrow \dot{\mathbb{K}}_p(T)$, where $\dot{\mathbb{K}}_p(T)$ is the closure of $\mathscr{C}^\infty([0,T];\mathcal{S}(\R^3))$ in $\dot{\K}_p(T)$. Therefore, the fixed point routine that we have developed above can be implemented inside $\dot{\mathbb{K}}_p(T)$ and our fixed point $v$ actually belongs to that space.

\medskip

 Now, using again estimate (3.39) of \cite[Subsection 3.4.1]{bcd}, with $(s,r,\rho_1)$ as before and $\rho=\infty$ or $\rho=2$, we first have
\begin{align}\label{estim:B1}
  \left\|\mathfrak{B}(w,v)\right\|_{\widetilde{\L}^\infty_T \dot{\B}_{p,\infty}^{-1+3/p}(\R^3)} &\lesssim \|w\|_{\dot{\K}_p(T)} \|v\|_{\dot{\K}_p(T)},\\
\label{estim:B2}  \left\|\mathfrak{B}(w,v)\right\|_{\widetilde{\L}^2_T \dot{\B}_{p,\infty}^{3/p}(\R^3)} &\lesssim \|w\|_{\dot{\K}_p(T)} \|v\|_{\dot{\K}_p(T)}.
\end{align}
Recall that
\[ v = S(u^\init,G) + \mathfrak{B}(v,v).\]
By assumption, $S(u^\init,G)$ belongs to $\dot{\mathcal{B}}_p(T) = \widetilde{\L}^\infty_T \dot{\B}_{p,\infty}^{-1+3/p}(\R^3) \cap \widetilde{\L}^1_T \dot{\B}_{p,\infty}^{1+3/p}(\R^3)$ which embeds continuously in $\widetilde{\L}^2_T \dot{\B}_{p,\infty}^{3/p}(\R^3)$, by interpolation. Using \eqref{estim:B1} -- \eqref{estim:B2}, we see therefore that our fixed point $v$ actually belongs to $\widetilde{\L}_T^\infty \dot{\B}_{p,\infty}^{-1+3/p}(\R^3)$ and $\widetilde{\L}_T^2 \dot{\B}^{3/p}_{p,\infty}(\R^3)$, with the estimate
\[\|v\|_{\widetilde{\L}_T^\infty \dot{\B}_{p,\infty}^{-1+3/p}(\R^3)}+\|v\|_{\widetilde{\L}_T^2 \dot{\B}^{3/p}_{p,\infty}(\R^3)} \lesssim \|S(u^\init,G)\|_{\dot{\mathcal{B}}(T)}+ \|v\|_{\dot{\K}_p(T)}^2.\] 
With this information, using the paraproduct estimates given in \cite[Theorem 2.47]{bcd} and \cite[Theorem 2.52]{bcd}, we get for our fixed point $v$
\begin{align*}
  \|\textnormal{div}(v\otimes v)\|_{\widetilde{\L}^1_T\dot{\B}^{-1+3/p}_{p,\infty}(\R^3)} &\leq \|v\otimes v\|_{\widetilde{\L}^1_T\dot{\B}^{3/p}_{p,\infty}(\R^3)}\\ &\lesssim \|v\|_{\widetilde{\L}_T^2 \dot{\B}^{3/p}_{p,\infty}(\R^3)}^2\\  &\lesssim \|S(u^\init,G)\|_{\dot{\mathcal{B}}_p(T)}^2 + \|v\|_{\dot{\K}_p(T)}^4,\end{align*}
and another use of estimate (3.39) of \cite[Subsection 3.4.1]{bcd} with $\rho=\rho_1=1$ allows eventually to write
\begin{align*}
  \left\|\int_0^t e^{(t-s)\Delta}\mathbf{P}\big[\textnormal{div}(v\otimes v)\big]\,\dd s\right\|_{\widetilde{\L}^1_T \dot{\B}_{p,\infty}^{1+3/p}} &\lesssim \|\textnormal{div}(v\otimes v)\|_{\widetilde{\L}^1_T\dot{\B}^{-1+3/p}_{p,\infty}(\R^3)}\\
    &\lesssim \|S(u^\init,G)\|_{\dot{\mathcal{B}}_p(T)}^2 + \|v\|_{\dot{\K}_p(T)}^4
\end{align*}
Gathering all these estimates, we have proved
\begin{align*}
\|\mathfrak{B}(v,v)\|_{\dot{\mathcal{B}}_p(T)} \leq \|v\|_{\dot\K_p(T)}^2 + \|v\|_{\dot\K_p(T)}^4 + \|S(u^\init,G)\|_{\dot{\mathcal{B}}_p(T)}^2.
\end{align*}
Since we already noticed that $S(u^\init,G)\in\dot{\mathbb{B}}_p(T)\Rightarrow v\in \dot{\mathbb{K}}_p(T)$, this last estimate eventually establishes $(i)$. For $(ii)$, we can proceed just as \cite{lemarie} (where the functional setting is a bit different). One starts with the usual convolution scheme for constrcting Leray solutions to the Navier-Stokes system, with a family of mollifiers $(\rho_\ep)_\ep$. For each $\ep$, the fixed point procedure we just presented in $\dot\K_p(T)$ applies for this modified system (as the smoothed out convection term enjoys the same estimates, uniformly in $\ep$) and gives a (unique) solution $u_\ep \in \B_{\dot\K_p(T)}(0,2\alpha_p)$. Thanks to the usual weak compactness argument for the sequence $(u_\ep)_\ep$, one then recovers a solution which is both in $\dot\K_p(T)$ and Leray, and which must thus coincide with the solution obtained in (i) by uniqueness in $\dot\K_p(T)$.
$\qedhere$ 
\end{proof}
In our case, $G$ is given by the Brinkman force that we interpret as an independent forcing. This  is justified thanks to the following lemma.
 \begin{lem}
 \label{lem:brinkBp}
 The Brinkman force $F:=j_f -\rho_f u$ belongs to $\Ll^2(\R_+;\mathrm{L}^{3/2}(\R^3))$ and $\Ll^1(\R_+;\mathrm{L}^3(\R^3))$, and we have 
     \begin{align}\label{ineq:brinkBeso}
\left\|\int_0^t e^{(t-s)\Delta}\mathbf{P} F(s)\,\dd s\right\|_{\dot{\mathcal{B}}_p(T)} \lesssim \left(\int_0^T \|F(s)\|_{3/2}^2\,\dd s \right)^{1/2} + \int_0^T \|F(s)\|_{3}\,\dd s,
\end{align}
where the constant involved in  $\lesssim$ does not depend on $T$. In particular, there holds
     \begin{align}\label{bel:brink}
       \int_0^t e^{(t-s)\Delta}\mathbf{P} F(s)\,\dd s \in \dot{\mathbb{B}}_p(T),\end{align}
     and
     \begin{align}\label{brink0}
\lim_{T\rightarrow 0^+} \left\| \int_0^t e^{(t-s)\Delta}\mathbf{P} F(s)\,\dd s\right\|_{\dot{\mathcal{B}}_p(T)} = 0. 
     \end{align}
\end{lem}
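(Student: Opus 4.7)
The plan is to address the three assertions in turn, with the maximal regularity estimate \eqref{ineq:brinkBeso} as the main technical part.

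\textbf{Step 1 (integrability of $F$).} I will apply Lemma~\ref{lem:reg} of Appendix~\ref{sec:leray+} to obtain $m_k f\in \Ll^\infty(\R_+;\mathrm{L}^\infty(\R^3))$ for $k\in\{0,1,2\}$. Together with the conservation of $M_0 f$ and the energy estimate bounding $M_2 f$, an interpolation argument then yields $\rho_f, |j_f|\in \Ll^\infty(\R_+;\mathrm{L}^p(\R^3))$ for every $p\in[1,\infty]$. Decomposing $F=j_f-\rho_f u$, Hölder with $\tfrac{1}{3/2}=\tfrac{1}{6}+\tfrac{1}{2}$ controls $\|\rho_f u\|_{3/2}$ using only the Leray energy estimate for $u$, while Hölder with $\tfrac{1}{3}=\tfrac{1}{6}+\tfrac{1}{6}$ combined with Sobolev $\dot{\H}^1\hookrightarrow \mathrm{L}^6$ controls $\|\rho_f u\|_3$ by the Leray dissipation estimate $\nabla u\in \Ll^2(\R_+;\mathrm{L}^2(\R^3))$. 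The $j_f$ contribution is trivially handled since $j_f\in \Ll^\infty(\R_+;\mathrm{L}^{3/2}\cap \mathrm{L}^3)$.

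\textbf{Step 2 (the main estimate).} My strategy is a direct Littlewood--Paley analysis of $v(t):=\int_0^t e^{(t-s)\Delta}\mathbf{P} F(s)\,\dd s$, relying on two classical ingredients: the exponential decay $\|e^{\tau\Delta}\dot{\Delta}_j\cdot\|_{\mathrm{L}^q\to \mathrm{L}^q}\lesssim e^{-c\tau 4^j}$ of the heat semigroup on frequency-localised data, and Bernstein's inequality $\|\dot{\Delta}_j g\|_{\mathrm{L}^p}\lesssim 2^{3j(1/q-1/p)}\|g\|_{\mathrm{L}^q}$ (combined with the $\mathrm{L}^q$-boundedness of $\mathbf{P}$ for $q\in(1,\infty)$). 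For the $\widetilde{\L}^\infty_T\dot{\B}^{-1+3/p}_{p,\infty}$ component of $\dot{\mathcal{B}}_p(T)$, I will choose $q=3/2$ and estimate the time integral by Cauchy--Schwarz: the resulting time factor $2^{-j}$ balanced against the Bernstein exponent $2^{j(2-3/p)}$ leaves $\|\dot{\Delta}_j v\|_{\mathrm{L}^\infty_T\mathrm{L}^p}\lesssim 2^{j(1-3/p)}\|F\|_{\mathrm{L}^2_T\mathrm{L}^{3/2}}$, exactly what is needed. For the $\widetilde{\L}^1_T\dot{\B}^{1+3/p}_{p,\infty}$ component, I will instead choose $q=3$ and apply Fubini: the time integral contributes $4^{-j}$, which against the Bernstein exponent $2^{j(1-3/p)}$ gives $\|\dot{\Delta}_j v\|_{\mathrm{L}^1_T\mathrm{L}^p}\lesssim 2^{-j(1+3/p)}\|F\|_{\mathrm{L}^1_T\mathrm{L}^3}$. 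Taking suprema in $j$ and summing yields \eqref{ineq:brinkBeso}, crucially with an absolute constant, independent of $T$.

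\textbf{Step 3 (consequences).} The membership \eqref{bel:brink} follows from a standard density argument: approximating $F$ in $\mathrm{L}^2(0,T;\mathrm{L}^{3/2})\cap \mathrm{L}^1(0,T;\mathrm{L}^3)$ by smooth compactly supported forcings $(F_n)_n$, the corresponding Duhamel integrals $v_n$ are smooth and rapidly decaying in space-time, hence belong to the class underlying $\dot{\mathbb{B}}_p(T)$, and they converge to $v$ in $\dot{\mathcal{B}}_p(T)$ by the estimate just proven. Finally, \eqref{brink0} is an immediate consequence of the $T$-independence of the constant in \eqref{ineq:brinkBeso} and dominated convergence, which makes both $\|F\|_{\mathrm{L}^2(0,T;\mathrm{L}^{3/2})}$ and $\|F\|_{\mathrm{L}^1(0,T;\mathrm{L}^3)}$ vanish as $T\to 0^+$.

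The main obstacle lies in Step 2, where one has to align the two Lebesgue exponents $3/2$ and $3$ (with companion time-integrabilities $2$ and $1$) with the regularity indices of the two endpoints of $\dot{\mathcal{B}}_p(T)$. The scaling computation shows these choices are sharp, and precisely explains why the right-hand side of \eqref{ineq:brinkBeso} must take this mixed form.
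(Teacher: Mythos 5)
Your Steps 1 and 2 follow essentially the same route as the paper: the same integrability bookkeeping for $F$ (the paper uses the pointwise bound $|F|\leq \rho_f^{1/2}\D(f,u)^{1/2}$ plus interpolation rather than your H\"older splittings, but both work), and for \eqref{ineq:brinkBeso} exactly the same Littlewood--Paley argument via \cite[Lemma 2.4]{bcd} (heat decay on the blocks $\dot{\Delta}_j$), Young/Cauchy--Schwarz in time producing the factors $2^{-j}$ and $2^{-2j}$, and Bernstein from $\mathrm{L}^{3/2}$ resp.\ $\mathrm{L}^3$ to $\mathrm{L}^p$; your exponent count is correct and the constants are indeed $T$-independent.

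The one genuine flaw is in Step 3. You approximate $F$ by smooth \emph{compactly supported} forcings $F_n$ and claim that the Duhamel integrals $v_n$ are ``smooth and rapidly decaying in space-time, hence belong to the class underlying $\dot{\mathbb{B}}_p(T)$''. This is not true as stated: the Leray projector $\mathbf{P}$ has a symbol that is singular at $\xi=0$, so $\mathbf{P}F_n(s)$ is in general \emph{not} rapidly decaying (it typically decays only like $|x|^{-3}$), and hence $v_n$ need not lie in $\mathscr{C}^\infty([0,T];\mathcal{S}(\R^3))$. The paper avoids this by truncating on the Fourier side instead: one approximates $F$ in $\Ll^2(\R_+;\mathrm{L}^{3/2})\cap\Ll^1(\R_+;\mathrm{L}^{3})$ by functions whose Fourier transform is compactly supported (away from nothing in particular, but compact), a spectral property that \emph{is} preserved by both $\mathbf{P}$ and $e^{t\Delta}$; by Remark~\ref{rem:trunc}, approximability by such spectrally truncated functions characterises $\dot{\mathbb{B}}_p(T)$, and the uniform estimate \eqref{ineq:brinkBeso} transfers the convergence to the Duhamel integrals. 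With this substitution your density argument goes through, and \eqref{brink0} then follows, as you say, from the $T$-independence of the constant together with the vanishing of $\|F\|_{\mathrm{L}^2(0,T;\mathrm{L}^{3/2})}$ and $\|F\|_{\mathrm{L}^1(0,T;\mathrm{L}^{3})}$ as $T\to 0^+$.
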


\begin{proof}
First, let's recall that by Lemma~\ref{lem:reg}, $\rho_f$ and $j_f$ belong to $\Ll^\infty(\R_+;\mathrm{L}^\infty(\R^3)$. Since $u$ is a Leray solution, using the Sobolev embedding $\H^1(\R^3)\hookrightarrow \mathrm{L}^6(\R^3)$ we infer $u\rho_f \in\Ll^2(\R_+;\mathrm{L}^6(\R^3))$. Furthermore $j_f \in \Ll^\infty(\R_+;\mathrm{L}^{3/2}(\R^3))$ (see e.g. \cite[Lemma 4.6]{ehkm}), and thus by interpolation we see that the Brinkman force $F$ belongs to  $\Ll^2(\R_+;\mathrm{L}^6(\R^3))$. Using Cauchy-Schwarz's inequality we have $|F|\leq \rho_f^{1/2}\D(f,u)^{1/2}$ where the dissipation $\D(f,u)$ is defined in \eqref{def:D} so that, the energy inequality \eqref{ineq:nrj} entails $F\in\Ll^2(\R_+;\mathrm{L}^1(\R^3))$. By interpolation, we thus have $F$ in  $\Ll^2(\R_+;\mathrm{L}^q(\R^3))$ for all  $q\in[1,6]$.
By \cite[Lemma 2.4]{bcd}, there holds
  \begin{align*}
    \left\|{\dot{\Delta}}_j \int_0^t e^{(t-s)\Delta} \mathbf{P}F(s) \,\dd s \right\|_p &\leq \int_0^t \| e^{(t-s)\Delta} {\dot{\Delta}}_j \mathbf{P}F(s) \|_p \,\dd s \\
                                                                                &\lesssim \int_0^t e^{-c(t-s) 2^{2j}} \|{\dot{\Delta}}_j \mathbf{P}F(s)\|_p\,\dd s.
  \end{align*}
For all $T>0$, using Young's inequality for the (temporal) convolution we infer
\begin{align}\label{ineq:younghol1}
  \sup_{t\in[0,T]}    \left\|{\dot{\Delta}}_j \int_0^t e^{(t-s)\Delta} \mathbf{P}F(s) \,\dd s \right\|_p &\lesssim \frac{1}{2^j} \left(\int_0^T \|{\dot{\Delta}}_j \mathbf{P}F(s)\|_p^2\,\dd s\right)^{1/2},\\
\label{ineq:younghol2}  \int_0^T   \left\|{\dot{\Delta}}_j \int_0^t e^{(t-s)\Delta} \mathbf{P}F(s) \,\dd s \right\|_p\,\dd t &\lesssim \frac{1}{2^{2j}} \int_0^T \|{\dot{\Delta}}_j \mathbf{P}F(s)\|_p\,\dd s.
  \end{align}
By Bernstein's inequality, the estimates \eqref{ineq:younghol1} -- \eqref{ineq:younghol2} thus imply that
\begin{align*}
  \sup_{t\in[0,T]}    \left\|{\dot{\Delta}}_j \int_0^t e^{(t-s)\Delta} \mathbf{P}F(s) \,\dd s \right\|_p &\lesssim 2^{j(1-\frac{3}{p})} \left(\int_0^T \|{\dot{\Delta}}_j \mathbf{P}F(s)\|_{3/2}^2\,\dd s\right)^{1/2},\\
  \int_0^T   \left\|{\dot{\Delta}}_j \int_0^t e^{(t-s)\Delta} \mathbf{P}F(s) \,\dd s \right\|_p\,\dd t &\lesssim 2^{j(-1-\frac{3}{p})} \int_0^T \|{\dot{\Delta}}_j \mathbf{P}F(s)\|_{3}\,\dd s.
  \end{align*}
 In other words, we have obtained that
  \begin{multline*}
\left\|\int_0^t e^{(t-s)\Delta}\mathbf{P} F(s)\,\dd s\right\|_{\dot{\mathcal{B}}_p(T)} \lesssim \sup_{j\in \mathbf{Z}} \left(\int_0^T \|{\dot{\Delta}}_j\mathbf{P} F(s)\|_{3/2}^2\,\dd s \right)^{1/2} + \sup_{j\in\mathbf{Z}} \int_0^T \|{\dot{\Delta}}_j \mathbf{P}F(s)\|_{3}\,\dd s.
  \end{multline*}
By continuity of the spectral projector ${\dot{\Delta}}_j$ and of the Leray projector on $\mathrm{L}^{3/2}(\R^3)$ and $\mathrm{L}^3(\R^3)$, we have   $\|{\dot{\Delta}}_j \mathbf{P} F(s)\|_{\mathrm{L}^q(\R^3)} \leq \|F(s)\|_{q}$ and we recover \eqref{ineq:brinkBeso}. $F$ can be approximated in $\Ll^2(\R_+;\mathrm{L}^{3/2}(\R^3))$ and $\Ll^1(\R_+;\mathrm{L}^3(\R^3))$ by functions having a compactly supported Fourier transform and this last spectral property is preserved by both $\mathbf{P}$ and the heat flow. The very estimate \eqref{ineq:brinkBeso} that we established shows therefore also the belonging \eqref{bel:brink}. $\qedhere$ 
\end{proof}
       \begin{lem}\label{lem:uinit}
If $u^\init\in\dot{\mathbb{B}}_{p,\infty}^{-1+3/p}$, then $e^{t\Delta}u^\init \in \dot{\mathbb{B}}_p(T)$ and $\lim_{T\to 0^+} \|e^{t\Delta}u^\init\|_{\dot{\K}_{p}(T)} = 0$.
       \end{lem}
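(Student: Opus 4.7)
The argument hinges on two ingredients: standard heat flow estimates in the Chemin-Lerner Besov scale, and a density/approximation argument to access the closure structure of $\dot{\mathbb{B}}_p(T)$ and $\dot{\mathbb{B}}_{p,\infty}^{-1+3/p}$.

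First, I would establish the following uniform (in $T>0$) heat flow estimates for any $w \in \dot{\B}_{p,\infty}^{-1+3/p}(\R^3)$:
\begin{equation*}
\|e^{t\Delta} w\|_{\dot{\mathcal{B}}_p(T)} + \|e^{t\Delta} w\|_{\dot{\K}_p(T)} \lesssim \|w\|_{\dot{\B}_{p,\infty}^{-1+3/p}(\R^3)}.
\end{equation*}
These follow, frequency block by frequency block, from the pointwise bound $\|e^{t\Delta}\dot{\Delta}_j w\|_p \lesssim e^{-ct 2^{2j}}\|\dot{\Delta}_j w\|_p$ provided by \cite[Lemma 2.4]{bcd}, combined with the computations
\[
\Big(\int_0^\infty e^{-4ct 2^{2j}}\,\dd t\Big)^{1/4} \lesssim 2^{-j/2}, \qquad \int_0^\infty e^{-ct 2^{2j}}\,\dd t \lesssim 2^{-2j}, \qquad \sup_{t\geq 0} e^{-ct 2^{2j}} = 1.
\]
Multiplying respectively by $2^{j(-1/2+3/p)}$, $2^{j(1+3/p)}$, and $2^{j(-1+3/p)}$ and taking the supremum in $j$ yields the three desired bounds.

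Next, to prove that $e^{t\Delta} u^\init \in \dot{\mathbb{B}}_p(T)$, I would exploit the assumption $u^\init \in \dot{\mathbb{B}}_{p,\infty}^{-1+3/p}$, meaning $u^\init$ is the $\dot{\B}_{p,\infty}^{-1+3/p}$-limit of a sequence $(u^\init_n)_n \subset \mathcal{S}(\R^3)$. Since the heat semigroup preserves Schwartz functions in space and the time derivatives of $e^{t\Delta} u^\init_n$ are just iterated Laplacians (themselves Schwartz), one has $e^{t\Delta} u^\init_n \in \mathscr{C}^\infty([0,T];\mathcal{S}(\R^3))$. Applying the uniform estimate from Step 1 to $u^\init - u^\init_n$ gives $e^{t\Delta} u^\init_n \to e^{t\Delta} u^\init$ in $\dot{\mathcal{B}}_p(T)$, so the limit lies in the closure $\dot{\mathbb{B}}_p(T)$.

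Finally, for the vanishing claim $\|e^{t\Delta}u^\init\|_{\dot{\K}_p(T)} \to 0$, I would split via density. Given $\epsilon>0$, choose Schwartz $v$ with $\|u^\init - v\|_{\dot{\B}_{p,\infty}^{-1+3/p}} < \epsilon$, and write
\begin{equation*}
\|e^{t\Delta}u^\init\|_{\dot{\K}_p(T)} \leq \|e^{t\Delta}(u^\init - v)\|_{\dot{\K}_p(T)} + \|e^{t\Delta} v\|_{\dot{\K}_p(T)} \lesssim \epsilon + \|e^{t\Delta} v\|_{\dot{\K}_p(T)},
\end{equation*}
using Step 1 on the first term. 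The crux is to show that for a \emph{fixed} Schwartz $v$, $\|e^{t\Delta} v\|_{\dot{\K}_p(T)} \to 0$ as $T\to 0^+$. The obstruction is that the Chemin-Lerner norm takes the supremum over $j$ of a quantity vanishing pointwise in $T$, so dominated convergence does not apply directly. The workaround is a high/low frequency split: because $v$ is Schwartz, $2^{j(-1+3/p)}\|\dot{\Delta}_j v\|_p \to 0$ as $|j|\to\infty$, so for any $\eta>0$ there exists $N$ such that contributions from $|j|>N$ are bounded by $C\eta$ uniformly in $T$ (using the same block estimate as in Step 1); for the finitely many remaining blocks $|j|\leq N$, the trivial bound $\|\dot{\Delta}_j e^{t\Delta} v\|_p \leq \|\dot{\Delta}_j v\|_p$ yields a global prefactor $T^{1/4}$ that vanishes as $T\to 0^+$. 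Letting $T\to 0^+$, then $\eta\to 0$, then $\epsilon\to 0$ concludes. The main obstacle is precisely this last frequency-splitting step; the rest is routine once the heat kernel estimates are in hand.
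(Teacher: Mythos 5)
Your proof is correct and follows essentially the same route as the paper's: a uniform heat-flow estimate in the Chemin--Lerner scale, a density argument to land in the closure $\dot{\mathbb{B}}_p(T)$, and a splitting for the $\dot{\K}_p(T)$ norm in which the spectrally controlled part gains a factor $T^{1/4}$ while the remainder is made uniformly small. The only cosmetic difference is that the paper approximates directly by the spectral truncations $S_N u^\init$ (so that a single Bernstein bound $\|S_N u^\init\|_{\dot{\B}^{-1/2+3/p}_{p,\infty}}\lesssim 2^{N/2}\|u^\init\|_{\dot{\B}^{-1+3/p}_{p,\infty}}$ disposes of the main term), whereas you take a general Schwartz approximant and then perform an additional high/low frequency split on it.
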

       \begin{proof}
         Using \cite[Subsection 5.6.1]{bcd} we have directly $\|e^{t\Delta} u^\init\|_{\dot{\mathcal{B}}_p(T)} \lesssim \|u^\init\|_{\dot{\B}_{p,\infty}^{-1+3/p}(\R^3)}$. Now for all $N\in\N$, we have $S_N e^{t\Delta} u^\init = e^{t\Delta} S_N u^\init$. Having in mind Remark~\ref{rem:trunc}, we infer the implication $u^\init \in\dot{\mathbb{B}}_{p,\infty}^{-1+3/p}(\R^3)\Rightarrow e^{t\Delta} u^\init \in \dot{\mathbb{B}}_p(T)$. We already noted, by interpolation, that $\dot{\mathcal{B}}_p(T) \hookrightarrow \dot{\K}_p(T)$. So for any $N\in\N$ we have
         \begin{align*}
\|e^{t\Delta} u^\init\|_{\dot{\K}_p(T)} \lesssim \|e^{t\Delta} S_N u^\init\|_{\dot{\K}_p(T)} + \|(\textnormal{Id}-S_N)u^\init\|_{\dot{\B}_{p,\infty}^{-1+3/p}(\R^3)}.
         \end{align*}
         But for any fixed $N$, there holds using estimate (3.39) of \cite[Subsection 3.4.1]{bcd} with $\rho=\infty$
         \begin{align*}
           \|e^{t\Delta} S_N u^\init\|_{\dot{\K}_p(T)} &\leq T^{1/4} \|e^{t\Delta} S_N u^\init\|_{\widetilde{\L}^\infty_T \dot{\B}_{p,\infty}^{-1/2+3/p}(\R^3)} \\
                                                       & \lesssim T^{1/4} \|S_N u^\init\|_{\dot{\B}^{-1/2+3/p}(\R^3)_{p,\infty}} \\
           &\lesssim T^{1/4} 2^{N/2} \|u^\init\|_{\dot{\B}^{-1+3/p}_{p,\infty}(\R^3)} \conv{T}{0^+} 0.
         \end{align*}
         The conclusion follows because $(S_N u^\init)_N\rightarrow u^\init$ in $\dot{\B}^{-1+3/p}_{p,\infty}(\R^3)$.
       \end{proof}

       Combining Lemma~\ref{lem:fixedpoint}, Lemma~\ref{lem:brinkBp} and Lemma~\ref{lem:uinit} we recover the existence of a Leray solution belonging to $\dot{\mathbb{B}}_p(T)$ and we may invoke Chemin's uniqueness theorem as explained at the beginning of the proof, so that Theorem~\ref{thm:BesovVNS} is proven.

       \section{Proof of Theorem~\ref{thm:BesovVNS:glob}}
\label{sec:proof3}
        Assume additionally that $u^\init \in {\dot{\B}^{-3/2}_{2,\infty}}(\R^3)$. The aim is now to prove that the local in time $\dot{\mathbb{B}}_p(T)$ solution, \emph{a priori} defined on a small time intervall $(0,T)$, can be globally extended up to a smallness condition like \eqref{hyp:smallnessBesov}. As a matter of fact, the argument we present below is similar to the one used by Danchin \cite{Danchin} to obtain a unique Fujita-Kato solution to VNS -- that is for $u^\init \in {\dot{\H}^{1/2}}(\R^3)$, see \cite[Theorem 1.6]{Danchin}.
\subsection{Decay results for the VNS system}
Our proof of Theorem~\ref{thm:BesovVNS:glob}  relies on \cite{Danchin}, from which we extract the following statement.  
\begin{thm}[\cite{Danchin}]\label{thm:danchin}
  Consider an admissible initial data $(f^\init,u^\init)$ in the sense of Definition~\ref{def:admi} and assume furthermore that $u^\init\in \dot{\B}^{-3/2}_{2,\infty}(\R^3)\cap \H^1(\R^3)$ and $N_q(f^\init)< \infty$ for some $q>5$. There exists a nondecreasing function $\Theta$ continuously vanishing at $0$ and a nonincreasing function $\Psi$, such that the following holds. Whenever
  \begin{align}\label{ineq:danch1}
\|u^\init\|_{\H^1(\R^3)} + M_2 f^\init \leq\Psi\big(  \| f^\init\|_{1} +  N_q(f^\init) +\|u^\init\|_{\dot{\B}_{2,\infty}^{-3/2}(\R^3)}\big),
  \end{align}
  then the  \textup{VNS} system admits a global Leray solution $(f,u)$ for which the associated moments $j_f$, $\rho_f$ and the Brinkman force $F:= j_f-\rho_f u$ satisfy the following bounds and decay estimates
  \begin{multline}\label{eq:decay}
\sup_{t\geq 0} \Big\{\|F(t)\|_{\infty}+ \|\rho_f(t)\|_\infty + \|j_f(t)\|_\infty +  t^{5/4}\|F(t)\|_1\Big\}  \\\leq \Theta(\|u^\init\|_{\H^1(\R^3)} + M_2 f^\init + \|f^\init\|_{\mathrm{L}^1} +  N_q(f^\init) +\|u^\init\|_{\dot{\B}_{2,\infty}^{-3/2}(\R^3)}\big),
  \end{multline}
  and also
  \begin{multline}\label{eq:decay2}
\int_0^{+\infty} t^{9/4} \|F(t)\|_2^2\,\dd t  \leq \Theta(\|u^\init\|_{\H^1(\R^3)} + M_2 f^\init + \|f^\init\|_{\mathrm{L}^1} +  N_q(f^\init) +\|u^\init\|_{\dot{\B}_{2,\infty}^{-3/2}(\R^3)}\big).
  \end{multline}
\end{thm}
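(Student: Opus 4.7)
The content of Theorem~\ref{thm:danchin} is a quantitative repackaging of the global well-posedness and large-time analysis carried out by Danchin in \cite{Danchin}; my plan is therefore to extract each ingredient from his proof in the specific form needed here, rather than to reprove it from scratch.

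First, I would invoke Danchin's global well-posedness result for small initial data. His hypothesis involves a critical smallness on $u^\init$ (of Fujita-Kato type) together with control on the mass and a moment of $f^\init$, all of which are implied by \eqref{ineq:danch1} once one interpolates $\dot{\H}^{1/2}$ between $\mathrm{L}^2$ (controlled by the energy) and $\H^1$. This yields a global Fujita-Kato solution $(f,u)$, hence in particular a global Leray solution. Since the fluid velocity then enjoys the regularity \eqref{eq:strong}, the Lagrangian flow $\Z_u$ is well-defined and Lipschitz, so $f(t)=\Z_u(t,0)\#f^\init$ in the classical sense. Propagating the pointwise velocity decay $N_q(f^\init)<\infty$ for $q>5$ along characteristics, exactly as in Lemma~\ref{lem:reg}, yields $\rho_f,\,j_f\in \mathrm{L}^\infty(\R_+\times\R^3)$, which accounts for the first three bounds in \eqref{eq:decay}.

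The quantitative decay estimates are the heart of the matter, and the reason we need the extra $\dot{\B}^{-3/2}_{2,\infty}$ hypothesis. From \cite{Danchin}, under smallness the fluid velocity decays like $\|u(t)\|_\infty \lesssim (1+t)^{-3/4}$ (essentially the heat dispersion of the Navier-Stokes semigroup acting on $\dot{\B}^{-3/2}_{2,\infty}$), while a Choi--Kwon type argument shows that the velocity diameter of $\operatorname{supp} f(t,x,\cdot)$ contracts exponentially, forcing $j_f - \rho_f\,u$ to vanish at an algebraic rate. Combining these two effects yields the $t^{-5/4}$ bound on $\|F(t)\|_1$ stated in \eqref{eq:decay}; interpolation with the uniform $\mathrm{L}^\infty$ control on $F$ then delivers the time-weighted $\mathrm{L}^2$ integrability \eqref{eq:decay2}.

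The main obstacle, though really a matter of bookkeeping rather than new analysis, will be to check that the exponents $5/4$ and $9/4$ in Theorem~\ref{thm:danchin} come out exactly as advertised from Danchin's estimates (which are phrased in slightly different function spaces), and to trace the dependence of the smallness threshold $\Psi$ and of the final bound $\Theta$ on each of the three scale-invariant quantities $\|f^\init\|_{1}$, $N_q(f^\init)$ and $\|u^\init\|_{\dot{\B}^{-3/2}_{2,\infty}}$. The monotonicity and vanishing-at-zero properties of $\Psi$ and $\Theta$ then follow from the fact that at every step of Danchin's argument the implied constants depend continuously and monotonically on these three norms.
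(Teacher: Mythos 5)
Your overall strategy --- treating the theorem as a repackaging of Danchin's estimates and tracking where each bound comes from --- is exactly the paper's approach, and most of your roadmap (smallness condition matching Danchin's (1.2), propagation of $N_q$ along characteristics to get the $\mathrm{L}^\infty$ bounds on $\rho_f$ and $j_f$, the $t^{-5/4}$ decay of $\|F(t)\|_1$ from Danchin's decay analysis) is consistent with what the authors do.

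There is, however, one genuine gap: your derivation of \eqref{eq:decay2}. You propose to obtain the time-weighted $\mathrm{L}^2$ bound by interpolating the $t^{-5/4}$ decay of $\|F(t)\|_1$ with the uniform $\mathrm{L}^\infty$ control on $F$. This gives $\|F(t)\|_2^2 \leq \|F(t)\|_1\,\|F(t)\|_\infty \lesssim t^{-5/4}$, hence $t^{9/4}\|F(t)\|_2^2 \lesssim t$, which is \emph{not} integrable on $\R_+$; the exponents do not close, and \eqref{eq:decay} asserts no decay of $\|F(t)\|_\infty$ that could repair this. The paper instead obtains \eqref{eq:decay2} from a different ingredient of Danchin's proof: his weighted dissipation estimate (estimate (3.15) in \cite{Danchin}) controls
\begin{equation*}
\int_0^{\infty} t^{\beta} D_1(t)\,\dd t \quad\text{for all } \beta\in(1,5/2),
\qquad\text{with}\qquad
D_1(t)\geq \frac12 \iint_{\R^3\times\R^3} f(t)\,|v-u(t)|^2\,\dd x\,\dd v,
\end{equation*}
and then the pointwise Cauchy--Schwarz bound $\|F(t)\|_2^2 \leq \|\rho_f(t)\|_\infty \iint f(t)|v-u(t)|^2\,\dd x\,\dd v$ converts this, with $\beta=9/4$, into \eqref{eq:decay2}. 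In other words, the $\mathrm{L}^2$ estimate does not follow from the $\mathrm{L}^1$--$\mathrm{L}^\infty$ information on $F$ alone; it requires the relative-kinetic-energy dissipation with polynomial time weight, which is a separate (and stronger, in the relevant regime) piece of Danchin's analysis. You should replace the interpolation step by this argument.
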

 Of course, the solution built in \cite{Danchin} is actually more regular than a simple Leray solution and his main result includes also well-posedness in the associated functional setting. However, as far as our proof is concerned, we only need here this existence result. Theorem~\ref{thm:danchin} collects several estimates obtained in \cite{Danchin}, let us comment on where they appear in \cite{Danchin}. The smallness condition \eqref{ineq:danch1} is inherited from (1.2) in \cite[Theorem 1.1]{Danchin}. In the statement and proof of \cite[Theorem 1.1]{Danchin}, all the constants used to establish the estimates depend on the initial data, which explains the form of r.h.s. of \eqref{eq:decay} and \eqref{eq:decay2}.
 
 Also note that, since $q>5$, using $\|m_0 f^\init\|_\infty + \|m_2 f^\init\|_\infty \lesssim N_q(f^\init)$, the $\mathrm{L}^\infty(\R^3)$ norms of $m_0 f^\init$ and $m_2 f^\init$ (see the dependence of $c_0$ in \cite[Theorem 1.1]{Danchin}) are here simply replaced by $N_q(f^\init)$. 
 
 Moreover, estimate \cite[(1.2)]{Danchin} involves the $L^1$ norm of the initial fluid velocity, but as indicated in \cite[Remark 1.3]{Danchin}, it suffices to assume that $u^\init \in \dot{\B}_{2,\infty}^{-3/2}(\R^3)$ and to replace all dependencies on $\|u^\init\|_{L^1(\R^3)}$ by dependencies on $\|u^\init\|_{\dot{\B}_{2,\infty}^{-3/2}(\R^3)}$.

 The $\lesssim t^{-5/4}$ decay for the $\mathrm{L}^1(\R^3)$ of the Brinkman force is given in \cite[Corollary 1.4]{Danchin}. The bound in $\mathrm{L}^\infty(\R^3)$ for the Brinkman force comes from the one of $j_f$ and $\rho_f$, which in turn is obtained because the solution $(f,u)$ built by Danchin \cite{Danchin} actually satisfies $\nabla u \in\mathrm{L}^1(\R_+;\mathrm{L}^\infty(\R^3))$ (as stated in \cite[Theorem 1.1]{Danchin}). As already noticed in \cite{hkm2}, an adequate change of variable (originally introduced in \cite{bardegond}) allows then to control along the time the $\mathrm{L}^\infty(\R^3)$ norm of kinetic moments of $f$ by the initial data. We refer to the computation in \cite[Appendix A]{Danchin}. Lastly, in order to obtain estimate \eqref{eq:decay2},  
 we observe that \cite[Estimate (3.15)]{Danchin} implies a control on 
 \begin{equation*}
     \int_0^\infty t^\beta D_1(t)\,\dd t,
 \end{equation*}  
 in terms of the initial data, for all $\beta\in (1,5/2)$, where the quantity $D_1(t)$ defined in \cite[(2.5)]{Danchin} satisfies
 $$D_1(t)\geq \frac{1}{2}\int_{\R^3\times\R^3} f(t)|v-u(t)|^2\,\dd x\,\dd v.$$
 On the other hand, by Cauchy-Schwarz inequality we have
   \begin{align}\label{ineq:brinkman-L2}
   \|F(t)\|_2^2 
   &\leq  \|\rho_f(t)\|_\infty \int_{\R^3\times\R^3} f(t)|v-u(t)|^2\,\dd x\,\dd v.
   \end{align}
 Thus finally,
   \begin{align*}t^{9/4}\|F(t)\|_2^2 
   &\leq 
   2 \sup_{t\geq 0} \|\rho_f(t)\|_\infty (t^{9/4}D^1(t))
   \end{align*} is controlled in $L^1(\R_+)$ in terms of the initial data.     As a matter of fact, in addition to the previous estimates, several other decay estimates are obtained in the course of the proof of \cite[Theorem 1.1]{Danchin} and apply as well for our solutions.

     \subsection{Instantaneous regularization and propagation of initial norms}

Our first step is to let the system evolve for a short amount of time, so that the fluid velocity becomes smoother in order to use Theorem~\ref{thm:danchin} which asks for an initial fluid velocity in $\H^1(\R^3)$. We fix $T>0$ for which we have a solution in $\dot{\mathbb{B}}_p(T)$ for the VNS system; without loss of generality, we assume $T\leq 1$.

     \medskip

     Combining Lemma \ref{lem:reg} (ii) and \eqref{ineq:nrj} together with \eqref{ineq:brinkman-L2}, we observe that for any Leray solution such that $M_6f^\init$ is finite, 
     the Brinkman force is in $\Ll^2(\R_+;\mathrm{L}^2(\R^3))$. Thus any such Leray solution is instantaneously regularized and belongs (at least) to $\mathscr{C}^0(\R_{+}^\star;\H^1(\R^3))$. Owing to the energy-dissipation estimate \eqref{ineq:nrj}, there exists $t^\ast\in (0,T)$ for which 
     \begin{align}\label{ineq:H1}\|u(t^\star)\|_{\H^1(\R^3)}^2 \leq \frac{3}{T}\E^\init.\end{align}
     Our goal is now to replace the initial data $(f^\init,u^\init)$ by $(f^\star,u^\star) = (f(t^\star),u(t^\star))$ and try to prove that under a smallness condition like \eqref{hyp:smallnessBesov} on $(f^\init,u^\init)$, the smallness condition \eqref{ineq:danch1} is indeed satisfied for the shifted initial data $(f^\star,u^\star)$, that is :
       \begin{align}\label{ineq:todanch}
       \|u^\star\|_{\H^1(\R^3)} + M_2 f^\star \leq\Psi\big(  \| f^\star\|_{1} +  N_q(f^\star) +\|u^\star\|_{\dot{\B}_{2,\infty}^{-3/2}(\R^3)}\big).
         \end{align}
         If  we manage to do so, then we'll be in position to use the decay estimates \eqref{eq:decay} -- \eqref{eq:decay2} of Danchin's Theorem~\ref{thm:danchin}. 
         
       First of all, using \eqref{ineq:H1} and the energy-dissipation estimate \eqref{ineq:nrj}, and observing that $T$ is a decreasing function of $\E^\init$, as can be seen in the proofs of Lemma \ref{lem:fixedpoint} and Lemma \ref{lem:uinit}, we have  for some non-decreasing function $\alpha$ that
       \begin{equation}\label{ineq:toprove}
\|u^\star\|_{\H^1(\R^3)} + M_2 f^\star  \leq \alpha\big(\E^\init). 
\end{equation}
Let us now study the quantity $\| f^\star\|_{1} +  N_q(f^\star) +\|u^\star\|_{\dot{\B}_{2,\infty}^{-3/2}(\R^3)}$. To begin with, by conservation of the mass for the Vlasov equation, we have 
$$
\|f^\star\|_1 = \|f^\init\|_1.
$$ 
Furthermore, thanks to Lemma~\ref{lem:reg}, we know that the fluid velocity $u$ belongs to $\Ll^1(\R_+;\mathrm{L}^\infty(\R^3))$ and a careful inspection of the estimates shows that this control on a time interval $T$ depends increasingly on $T$ and $\E^\init$. Owing to \cite[Lemma 4.6]{hkm2}, we therefore have (as $t^\star\leq 1$) the estimate 
$$N_q(f^\star) \leq \gamma_1(\E^\init + N_q(f^\init)),
$$ 
for yet another non-decreasing function $\gamma_1$. Our last duty is to propagate the initial $\dot{\B}^{-3/2}_{2,\infty}(\R^3)$ seminorm in (finite) time: we rely on a simplified version of some the arguments proposed by Danchin in \cite[Section 3, Step 3]{Danchin}. Thanks to the so-called Besov maximal regularity estimate (3.39) of \cite[Subsection 3.4.1]{bcd} that we use here for $(\rho,s,p,r)=(\infty,-3/2,2,\infty)$ together with the embedding $\mathrm{L}^1(0,T;\mathrm{L}^1(\R^3))\hookrightarrow \widetilde{\L}^1_T \dot{\B}^{-3/2}_{2,\infty}(\R^3)$, we first infer
\begin{align*}
\|u(t^\star)\|_{\dot{\B}^{-3/2}_{2,\infty}(\R^3)} \lesssim \|u^\init\|_{\dot{\B}^{-3/2}_{2,\infty}(\R^3)} + \int_0^{t^\star} \|(u\cdot\nabla) u(s)\|_{1}\,\dd s + \int_0^{t^\star} \|F(s)\|_{1}\,\dd s,
\end{align*}
where $F= j_f-\rho_f u $ is the Brinkman force.  Thanks to Cauchy-Schwarz's inequality we have the pointwise estimate $|F|\leq \rho_f^{1/2} \D(u,f)^{1/2}$ from which we infer, using the energy-dissipation formula \eqref{ineq:nrj} and that $t^\star\leq 1$ \[\int_0^{t^\star}\|F(s)\|_{1}\,\dd s \leq \left(\int_0^{t^\star} \|\rho_f(s)\|_1\,\dd s \right)^{1/2} \E(u^\init,f^\init)^{1/2} \leq  \|f^\init\|_1^{1/2} \E(u^\init,f^\init)^{1/2}.\]
On the other hand, by Cauchy-Schwarz's inequality and the energy dissipation inequality 
\begin{align*}
\int_0^{t^\star} \|(u\cdot\nabla) u(s)\|_1 \,\dd s \leq \left(\int_0^{t^\star} \|u(s)\|_2^2\,\dd s\right)^{1/2} \left(\int_0^{t^\star}\|\nabla u(s)\|_2^2\,\dd s\right)^{1/2}
  \leq \sqrt{2} \E(u^\init,f^\init). 
\end{align*}
Gathering those estimates, we have established for some non-decreasing function $\gamma_2$
\[\|u(t^\star)\|_{\dot{\B}_{2,\infty}^{-3/2}(\R^3)} \leq \gamma_2(\|u^\init\|_{\dot{\B}^{-3/2}_{2,\infty}(\R^3)} +\|f^\init\|_1 + \E^\init).\]
All in all, considering the non-decreasing function $\gamma = \max(\gamma_1,\gamma_2)$ we have proved,
\begin{equation}
    \label{eq:starstar}
\| f^\star\|_{1} +  N_q(f^\star) +\|u^\star\|_{\dot{\B}_{2,\infty}^{-3/2}(\R^3)} \leq \gamma (\|u^\init\|_{\dot{\B}^{-3/2}_{2,\infty}(\R^3)}+\|f^\init\|_1 +  N_q(f^\init)).
\end{equation}


\subsection{Ensuring Danchin's decay estimates}
With the estimates \eqref{ineq:toprove} and \eqref{eq:starstar} at hand, as $\Psi$ is non-increasing, in order to ensure~\eqref{ineq:todanch}, it is enough to impose that
\begin{align}\label{road2Danch}
\alpha(\E^\init) \leq \Psi \circ \gamma(\|u^\init\|_{\dot{\B}^{-3/2}_{2,\infty}(\R^3)}+\|f^\init\|_1 + N_q(f^\init)).
\end{align}

In particular, if this smallness condition is satisfied, we're in position to use Danchin's decay estimates \eqref{eq:decay} -- \eqref{eq:decay2}.
\subsection{Global fixed-point strategy}
Now we intend to follow the same fixed-point procedure for the proof of Theorem~\ref{thm:BesovVNS} (see Lemma~\ref{lem:fixedpoint}), arguing that for small enough initial data such a procedure can be applied on $\dot{\K}_p(+\infty)$. 
Since $\dot{\mathcal{B}}_p(T)\hookrightarrow \dot{\K}_p(T)$ for all times (with a constant independent of $T$), the estimate  (see \cite[Subsection 5.6.1]{bcd}) $\|e^{t\Delta} u^\init\|_{\dot{\mathcal{B}}_p(+\infty)} \lesssim \|u^\init\|_{\dot{\B}_{p,\infty}^{-1+3/p}(\R^3)}$ ensures that the ``initial condition'' part of the smallness condition \eqref{ineq:petite} can indeed be eternally ensured upon smallness of the initial data. More precisely, for some non-decreasing (in fact, linear) function $\Phi_2$ we have
\begin{align}\label{eq:phi2}
 \|e^{t\Delta} u^\init\|_{\dot{\mathcal{B}}_p(+\infty)} \leq  \frac{\alpha_p}{2} \Phi_2(\|u^\init\|_{\dot{\B}_{p,\infty}^{-1+3/p}(\R^3)}),
\end{align}
where $\alpha_p$ is the threshold given in Lemma~\ref{lem:fixedpoint}. 
As we already have a solution in $\dot{\mathcal{B}}_p(t^\star)$, we intent to concatenate a solution on $[t^\star,+\infty)$ to recover a solution in $\dot{\mathcal{B}}_p(+\infty)$. The heart of the matter is therefore to control the size, on arbitrary large interval of times, of the (heat flow generated by the) Brinkman force, after the regularization time lap $[0,t^\star]$, and we only need to prove
\begin{align}\label{toprove}
\left\|\int_{t^\star}^{t} e^{(t-s)\Delta} \mathbf{P} F(s)\,\dd s \right\|_{\dot{\mathcal{B}}_p(+\infty)} \leq \frac{\alpha_p}{2},
\end{align}
where $\alpha_p$ is the threshold given in Lemma~\ref{lem:fixedpoint}. Let us check that such a smallness condition can indeed be fulfilled solely upon smallness for the initial data, in the sense of \eqref{hyp:smallnessBesov}. For consiceness, let's introduce the following notations
\begin{align*}
  \C^\init &:= \| u^\init\|_{\dot{\B}_{p,\infty}^{-1+3/p}(\R^3)}  + \E^\init + \| f^\init\|_{1} +  N_q(f^\init) +\|u^\init\|_{\dot{\B}_{2,\infty}^{-3/2}(\R^3)},\\
  \C^\star &:= \|u^\star\|_{\H^1(\R^3)} + M_2 f^\star + \|f^\star\|_{\mathrm{L}^1} +  N_q(f^\star) +\|u^\star\|_{\dot{\B}_{2,\infty}^{-3/2}(\R^3)},             
\end{align*}
so that $\C^\init$ refers to the smallness condition \eqref{hyp:smallnessBesov} while $\C^\star$ refers to the smallness condition \eqref{ineq:danch1} (shifted at time $t^\star$) and Danchin's decay estimates \eqref{eq:decay} -- \eqref{eq:decay2}. With this notation, estimates \eqref{ineq:toprove} and \eqref{eq:starstar} that we proved above rephrases as \begin{align}\label{ineq:tau}\C^\star \leq \tau(\C^\init),\end{align}
where $\tau=\max(\alpha,\gamma)$. Our strategy boils down to prove the following estimate for some non-decreasing function $\Phi_3$
\begin{align}\label{eq:inceta}
\left\|\int_{t^\star}^{t} e^{(t-s)\Delta} \mathbf{P} F(s)\,\dd s \right\|_{\dot{\mathcal{B}}_p(T)}  \leq \frac{\alpha_p}{2} \Phi_3 (\C^\init),
\end{align}
independently of $T$. Using estimate \eqref{ineq:brinkBeso} of Lemma~\ref{lem:brinkBp} shifting the initial time to $t^\star$, we have  
    \begin{align}\label{ineq:heatF}
\left\|\int_{t^\star}^t e^{(t-s)\Delta}\mathbf{P} F(s)\,\dd s\right\|_{\dot{\mathcal{B}}_p(T)} \lesssim \left(\int_{t^\star}^T \|F(s)\|_{3/2}^2\,\dd s \right)^{1/2} + \int_{t_0}^T \|F(s)\|_{3}\,\dd s,
    \end{align}
    where $\lesssim$ does not depend on $T$. On the one hand, we have by H\"older's inequality, using \eqref{eq:decay} (shifting the initial time to $t^\star$) and \eqref{ineq:tau}
 \begin{align*}
\int_{t^\star}^T \|F(s)\|_{3/2}^2\,\dd s  &\leq  \int_{t^\star}^T \|F(s)\|_{1}^{4/3}  \|F(s)\|_{\infty}^{2/3} \,\dd s   \\
                                          &\leq  \Theta(\C^\star)^2 \int_{t^\star}^{+\infty} s^{-5/3} \,\dd s\\
                                          &\leq \Theta(\tau(\C^\init))^2 \int_{t^\star}^{+\infty} s^{-5/3}\,\dd s\\
   &=\eta_1(\C^\init),
 \end{align*}
 where $\eta_1$ is a non-decreasing function. On the other hand, combining \eqref{eq:decay} and \eqref{eq:decay2} with Hölder's inequality we infer 
  \begin{align*}
\int_{t^\star}^T \|F(s)\|_{3}\,\dd s  &\leq  \int_{t^\star}^T \|F(s)\|_{2}^{2/3}  \|F(s)\|_{\infty}^{1/3} \,\dd s   \\
                                      & =  \int_{t^\star}^T \big(t^{9/4} \|F(s)\|_2^2\big)^{1/3} t^{-3/4} \|F(s)\|_\infty^{1/3}\,\dd s \\
                                      &\leq \Theta(\C^\init)^{1/3} \left(\int_{t^\star}^T t^{9/4} \|F(s)\|_2^2\,\dd s\right)^{1/3} \left( \int_{t^\star}^T t^{-9/8}\,\dd t\right)^{2/3}
    \\
    &\leq \Theta(\C^\init)^{2/3} \left( \int_{t^\star}^{+\infty} t^{-9/8}\,\dd t\right)^{2/3} = \eta_2(\C^\init).
   \end{align*}
   Using the estimates involving $\eta_1$ and $\eta_2$ inside \eqref{ineq:heatF}, we recover the expected estimate \eqref{eq:inceta} for some non-decreasing function $\Phi_3$. The choice of $\Phi:=\max(\Phi_1,\Phi_2,\Phi_3)$ allows to ensure, under the smallness condition \eqref{hyp:smallnessBesov} that Danchin's decay estimate is satisfied thanks to \eqref{road2Danch} and we therefore can apply the fixed-point procedure on $[t^\star,+\infty)$.
   
 \appendix

 \section{Littlewood-Paley theory}\label{sec:lp}
 \subsection{Dyadic partition of unity}\label{subsec:dyadic}
 We recall (see \cite[Proposition 2.10]{bcd} for instance) the existence of smooth radial functions $\chi,\ffi:\R^3\rightarrow[0,1]$ with $\chi$ supported on a vicinity of the unit ball, equal to $1$ on it, and $\ffi$ supported on a vicinity of the unit sphere, equal to $1$ on it, and vanishing near the origin, such that the following holds. Defining for $j\in\mathbf{Z}$, $\ffi_j(\xi) := \ffi(\xi/2^j)$, we have $\ffi_j\ffi_\ell =0$ as soon as $|j-\ell|\geq 2$ and the following identities
 \begin{align}\label{eq:dyapart1}
   \chi + \sum_{j\in\mathbf{N}} \ffi_j &= 1,\text{ on }\R^3,\\
\label{eq:dyapart2}   \sum_{j \in\mathbf{Z}} \ffi_j &= 1,\text{ on }\R^3 \setminus\{0\}.
 \end{align}
 \subsection{Localization}\label{subsec:loc}
 As the Fourier transform $\mathcal{F}$ is an automorphism on the space of tempered distributions $\mathcal{S}'(\R^3)$, one can define for $j\in\mathbf{Z}$ the (homogeneous) localization operator $\dot{\Delta}_j$ by means of its action on the ``Fourier side'', that is for any tempered distribution $u\in\mathcal{S}'(\R^3)$:
 \begin{align*}
\dot{\Delta}_j u = \mathcal{F}^{-1}( \ffi_j \widehat{u}).
\end{align*}
Then, as customary, we define the (non-homogeneous) localization operators in the following way. For $j\geq 0$ we simply set $\Delta_j = \dot{\Delta}_j$ and for $j\leq -2$ we set $\Delta_j =0$. Lastly, we set for any tempered distribution $u$, $\Delta_{-1} u := \mathcal{F}^{-1}(\chi \widehat{u})$. Of course, if $\theta,\psi\in\mathcal{S}(\R^3)$ are such that $\widehat{\theta} = \ffi$ and $\widehat{\psi} = \chi$, then $\dot{\Delta}_j$ is simply the convolution operator with $x\mapsto 8^j \theta(2^j x)$ while $\Delta_{-1}$ is the convolution opperator with $\psi$. In particular, it can be readily checked that defining $S_j:=\sum_{k\leq j-1} \Delta_k$, the sequence $(S_j)_{j\geq 0}$ corresponds to a usual mollifier, that is the convolution with $x\mapsto 8^j\psi(2^j x)$. On the ``Fourier side'', this simply amounts to multiply by $\xi\mapsto \chi(\xi/2^j)$ which is compactly supported and is equal to $1$ on a vicinity of the ball of size $2^j$.

\begin{rem}\label{rem:obs}
A fundamental observation is the following. Because of \eqref{eq:dyapart1} and the previous definition of the dyadic blocks $\Delta_j$, one actually has the identity $\sum_{j\in\mathbf{Z}} \Delta_j = \textnormal{Id}_{\mathcal{S}'(\R^3)}$, where the convergence holds pointwisely in $\mathcal{S}'(\R^3)$. Note however that in \eqref{eq:dyapart2}, part of the spectral information is missing as the partition covers only the blunted space $\R^3\setminus\{0\}$. In particular, the equality $\sum_{j\in\mathbf{Z}} \dot{\Delta}_j = \textnormal{Id}_{\mathcal{S}'(\R^3)}$ holds only modulo polynomials (as these are the only tempered distributions whose Fourier transform is localized at the origin).
\end{rem}

\subsection{Bernstein estimates}

The following well-known Bernstein inequality play a crucial proof (for a simple proof, see for instance \cite[Lemma 2.1]{bcd}).
\begin{lem}\label{lem:bernstein}
Fix $1\leq q \leq p \leq \infty $. For any $u\in\mathcal{S}'(\R^3)$ and $j\in\mathbf{Z}$ there holds
  \begin{align*}
 \|\dot{\Delta}_j u\|_{\mathrm{L}^p(\R^3)} \lesssim 2^{j\left(\frac{3}{q}-\frac{3}{p}\right)} \|\dot{\Delta}_j u\|_{\mathrm{L}^q(\R^3)},
  \end{align*}
  where the constant behind $\lesssim$ depends only on the function $\ffi$ introduced in Subsection~\ref{subsec:dyadic}.
\end{lem}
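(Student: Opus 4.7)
The plan is to exploit the spectral localization of $\dot{\Delta}_j u$ in the annulus $\mathcal{C}_j := \{\xi \in \R^3 : 2^{j-1} \lesssim |\xi| \lesssim 2^{j+1}\}$ (the precise size of which is dictated by the support of $\varphi$ introduced in Subsection~\ref{subsec:dyadic}) to rewrite $\dot{\Delta}_j u$ as a convolution with a Schwartz function whose $\mathrm{L}^r$ norm we can compute explicitly, and then invoke Young's convolution inequality.

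First, I will pick an auxiliary smooth radial function $\widetilde{\varphi} \in \mathscr{C}_c^\infty(\R^3 \setminus \{0\})$ that is equal to $1$ on the support of $\varphi$ and supported in a slightly larger annulus, and set $\widetilde{\theta} := \mathcal{F}^{-1}(\widetilde{\varphi})$. Because $\widetilde{\varphi}$ is smooth and compactly supported away from the origin, $\widetilde{\theta}$ is Schwartz and in particular belongs to $\mathrm{L}^r(\R^3)$ for every $r \in [1,\infty]$. Then I will set $\widetilde{\theta}_j(x) := 2^{3j}\widetilde{\theta}(2^j x)$, so that the Fourier transform of $\widetilde{\theta}_j$ is $\xi \mapsto \widetilde{\varphi}(\xi/2^j)$, which is identically $1$ on the support of $\varphi_j$. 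Since $\varphi_j \widehat{u}$ is supported in $\{\varphi_j = 1\} \cap \{\widetilde{\varphi}(\cdot/2^j) = 1\}$, this yields the crucial identity
\[
\dot{\Delta}_j u = \widetilde{\theta}_j \star \dot{\Delta}_j u.
\]

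Next, given $1 \leq q \leq p \leq \infty$, I will choose $r \in [1,\infty]$ by the Young relation $1 + \tfrac{1}{p} = \tfrac{1}{q} + \tfrac{1}{r}$, which is admissible precisely because $q \leq p$ forces $1/r = 1 - 1/q + 1/p \in [0,1]$. Young's convolution inequality then gives
\[
\|\dot{\Delta}_j u\|_{\mathrm{L}^p(\R^3)} \leq \|\widetilde{\theta}_j\|_{\mathrm{L}^r(\R^3)} \, \|\dot{\Delta}_j u\|_{\mathrm{L}^q(\R^3)}.
\]
A straightforward scaling computation shows
\[
\|\widetilde{\theta}_j\|_{\mathrm{L}^r(\R^3)} = 2^{3j(1 - 1/r)} \|\widetilde{\theta}\|_{\mathrm{L}^r(\R^3)} = 2^{j(3/q - 3/p)} \|\widetilde{\theta}\|_{\mathrm{L}^r(\R^3)},
\]
and since $\widetilde{\theta}$ depends only on the choice of $\widetilde{\varphi}$ (which itself depends only on $\varphi$), the resulting constant fits the requirement of the lemma.

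There is no real obstacle here: the only point requiring minor care is the existence of the auxiliary bump $\widetilde{\varphi}$ equal to $1$ on the support of $\varphi$, which is immediate since $\varphi$ is smooth and compactly supported away from the origin, and the endpoint case $r = \infty$ (when $p = q$), which only gives the trivial $\|\dot{\Delta}_j u\|_q \leq \|\widetilde{\theta}\|_1 \|\dot{\Delta}_j u\|_q$ with a harmless multiplicative constant.
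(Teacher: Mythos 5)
Your proof is correct and is exactly the standard argument behind \cite[Lemma 2.1]{bcd}, which is the reference the paper cites in lieu of giving a proof: reproduce $\dot{\Delta}_j u$ by convolution with a rescaled Schwartz kernel whose Fourier transform is $1$ on the annulus, apply Young's inequality, and conclude by scaling. One cosmetic slip only: $\varphi_j\widehat{u}$ is supported in $\mathrm{supp}\,\varphi_j$ rather than in $\{\varphi_j=1\}$, but all your identity needs is that $\widetilde{\varphi}(\cdot/2^j)\equiv 1$ on $\mathrm{supp}\,\varphi_j$, which holds by construction.
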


\subsection{Besov spaces}
We first introduce the following definition.
  \begin{defi}
For $s \in \R$, $p,q \in [1,\infty]$ and any tempered distribution $u\in\mathcal{S}'(\R^3)$, we define the \textsf{non homogeneous Besov norm} as
  \[
 \|u \|_{\B^s_{p,q}(\R^3)} = \| 2^{js} \| \Delta_j u\|_{\mathrm{L}^p(\R^3)} \|_{\ell^q(\mathbf{Z})},
\]
and the \textsf{homogeneous Besov semi-norm} as 
  \[
  \|u \|_{\dot{\B}^s_{p,q}(\R^3)} = \| 2^{js} \| \dot{\Delta}_j u\|_{\mathrm{L}^p(\R^3)} \|_{\ell^q(\mathbf{Z})}.
  \]
\end{defi}
With this definition at hand, we can at least define the space $\B^s_{p,q}(\R^3)$ as the space of tempered distribution having a finite non homogeneous Besov norm (and it is a Banach space). For the homogeneous case, one has to be cautious as we only have a semi-norm. This distinction is of course directly linked to Remark~\ref{rem:obs} of Subsection~\ref{subsec:loc}. The motivates, following \cite{bcd}, the introduction of the following subspace of \emph{homogeneous tempered distributions} \[\mathcal{S}_h(\mathbf{R}^3) := \textnormal{Ker}\Big(\textnormal{Id}_{\mathcal{S}'(\R^3)}-\sum_{j\in\mathbf{Z}}\dot{\Delta}_j\Big).\]
  \begin{defi}\label{def:homobeso}
    The \textsf{homogeneous Besov} space $\dot{\B}_{p,q}^s(\R^3)$ consists of all elements $u\in \mathcal{S}'(\mathbf{R}^3)$ for which $\|u\|_{\dot{\B}_{p,q}^s(\R^3)}<+\infty$. 
  \end{defi}
  It can be checked that when $s<3/p$ or $(s,q)=(3/p,1)$, the space $\dot{\B}_{p,q}^s(\mathbf{R}^3)$ equipped with this norm is a Banach space (see \cite[Theorem 2.25]{bcd} or \cite[Chapter 3]{LR2002}).
  
  \begin{prop}\label{prop:besnegborn}
    For $p <\infty$ and $r\in[1,\infty]$, any element $u$ of $\mathrm{L}^r(\R^3)\cap \dot{\B}_{p,1}^{3/p}(\R^3)$ is bounded on $\R^3$ and the following estimate holds
    \[\|u\|_\infty \lesssim \|u\|_r + \|u\|_{\dot{\B}_{p,1}^{3/p}(\R^3)}.\]
\end{prop}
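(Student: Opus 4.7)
The natural approach is a low/high frequency splitting of $u$ using the homogeneous Littlewood--Paley decomposition. Since $u\in\mathrm{L}^r(\R^3)$ with $r<\infty$, it belongs to $\mathcal{S}_h(\R^3)$ (the Fourier transform of an $\mathrm{L}^r$ function with $r$ finite carries no mass at the origin in the sense needed here), so one can write $u=\sum_{j\in\mathbf{Z}}\dot{\Delta}_j u$ with convergence in $\mathcal{S}'(\R^3)$. A crude triangle inequality then gives
\[
\|u\|_\infty \leq \sum_{j<0}\|\dot{\Delta}_j u\|_\infty + \sum_{j\geq 0}\|\dot{\Delta}_j u\|_\infty,
\]
and it remains to control each piece by one of the two norms on the right-hand side.

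For the high-frequency part ($j\geq 0$), I would apply the Bernstein inequality of Lemma~\ref{lem:bernstein} from $\mathrm{L}^p$ to $\mathrm{L}^\infty$, which yields $\|\dot{\Delta}_j u\|_\infty \lesssim 2^{3j/p}\|\dot{\Delta}_j u\|_p$. Summing in $j\geq 0$ and enlarging the sum to all of $\mathbf{Z}$ gives directly
\[
\sum_{j\geq 0}\|\dot{\Delta}_j u\|_\infty \lesssim \sum_{j\in\mathbf{Z}} 2^{3j/p}\|\dot{\Delta}_j u\|_p = \|u\|_{\dot{\B}_{p,1}^{3/p}(\R^3)}.
\]

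For the low-frequency part ($j<0$), the idea is to trade decay in $j$ against the $\mathrm{L}^r$ norm. Another application of Bernstein's inequality, this time from $\mathrm{L}^r$ to $\mathrm{L}^\infty$, gives $\|\dot{\Delta}_j u\|_\infty \lesssim 2^{3j/r}\|\dot{\Delta}_j u\|_r$. Since $\dot{\Delta}_j$ is convolution with the $\mathrm{L}^1$-normalized kernel $x\mapsto 8^j\theta(2^j x)$, Young's inequality yields $\|\dot{\Delta}_j u\|_r\lesssim \|u\|_r$ with a constant independent of $j$. Summing a convergent geometric series (this is where $r<\infty$ is needed, so that $2^{3j/r}$ has summable tail as $j\to-\infty$) produces
\[
\sum_{j<0}\|\dot{\Delta}_j u\|_\infty \lesssim \sum_{j<0} 2^{3j/r}\|u\|_r \lesssim \|u\|_r,
\]
and combining the two bounds gives the claimed inequality.

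The only delicate point is the initial assertion that $u=\sum_{j\in\mathbf{Z}}\dot{\Delta}_j u$ holds in $\mathcal{S}'(\R^3)$, i.e.\ that $u\in \mathcal{S}_h(\R^3)$: this is where the assumption $p<\infty$ (together with $r<\infty$) is used, to rule out the polynomial ambiguity pointed out in Remark~\ref{rem:obs}. Once this is granted the estimate is entirely mechanical from the two Bernstein inequalities above.
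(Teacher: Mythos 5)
Your argument is correct and follows the same basic template as the paper (low/high frequency splitting, Bernstein for the high frequencies), but the treatment of the low frequencies is genuinely different. The paper uses the \emph{non-homogeneous} decomposition $u=\Delta_{-1}u+\sum_{j\in\N}\dot{\Delta}_j u=\psi\star u+\sum_{j\in\N}\dot{\Delta}_j u$: the entire low-frequency content is a single convolution with the Schwartz function $\psi$, and H\"older/Young gives $\|\psi\star u\|_\infty\leq\|\psi\|_{r'}\|u\|_r$ at once, for \emph{every} $r\in[1,\infty]$, with no convergence issue and no need to invoke $\mathcal{S}_h(\R^3)$. You instead keep the fully homogeneous decomposition and sum the blocks $\dot{\Delta}_j u$ for $j<0$ via Bernstein from $\mathrm{L}^r$ to $\mathrm{L}^\infty$ and a geometric series. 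This works, but it costs you two things the paper's route avoids: (i) you must justify $u\in\mathcal{S}_h(\R^3)$ (your stated reason is a bit heuristic; the clean argument is that $\|\dot{S}_ju\|_\infty\lesssim 2^{3j/r}\|u\|_r\to 0$ as $j\to-\infty$ when $r<\infty$, which is exactly the same Bernstein bound you use); and (ii) your geometric series $\sum_{j<0}2^{3j/r}$ diverges when $r=\infty$, which is allowed by the statement — that case is of course trivial since the conclusion then reads $\|u\|_\infty\lesssim\|u\|_\infty+\|u\|_{\dot{\B}^{3/p}_{p,1}}$, but you should say so explicitly rather than silently assume $r<\infty$. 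One cosmetic point: the kernel $8^j\theta(2^jx)$ of $\dot{\Delta}_j$ is not ``$\mathrm{L}^1$-normalized'' (its integral is $\widehat{\theta}(0)=\ffi(0)=0$); what you actually use, correctly, is that its $\mathrm{L}^1$ norm equals $\|\theta\|_1$ uniformly in $j$, so Young still gives $\|\dot{\Delta}_ju\|_r\lesssim\|u\|_r$.
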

\begin{proof}
We have $u=\sum_{j\in\mathbf{Z}} \Delta_j u$, or equivalently $u = \psi \star u + \sum_{j\in\N} \dot{\Delta}_j u$ (see Subsection~\ref{subsec:loc} for the definition of $\psi$ and these identities). Since $\chi$ is a Schwartz function we have by Hölder's inequality $\|\psi \star u\|_\infty \lesssim \|u\|_r$, so we focus on the sum. Using Bernstein's inequality of Lemma~\ref{lem:bernstein} we have \[\|\Delta_j u\|_{\mathrm{L}^\infty(\R^3)} \lesssim 2^{j\frac{3}{p}} \|\Delta_j u \|_{\mathrm{L}^q(\R^3)},\]
and by definition of the $\dot{\B}_{p,1}^{3/p}(\R^3)$ semi-norm, we deduce
\begin{align*}
\sum_{j\in\N} \|\dot{\Delta}_j u\|_{\mathrm{L}^\infty(\R^3)} \lesssim \|u\|_{\dot{\B}^{3/p}_{p,1}(\R^3)},
\end{align*}
so that the sum we're looking at is actually normally convergent in $\mathrm{L}^\infty(\R^3)$. $\qedhere$
\end{proof}
\subsection{Chemin-Lerner time-space Besov spaces}\label{subsec:chemlen}
For time-depending functions, Chemin and Lerner introduced in \cite{cl} a class of spaces in which time integration is performed \emph{before} the discrete $\ell^q(\mathbf{Z})$ norm involved in the definition of the Besov norm. 

\medskip

More precisely, fix $T>0$, $I:=(0,T)$ and consider the subspace $\mathcal{S}_{h,T}'(\mathbf{R}^3)\subset \mathcal{S}'(\mathbf{R}\times\R^3)$ that consists of distributions $u$ satisfying the following identity
\begin{align*}
    u = \sum_{j\in\mathbf{Z}} \dot{\Delta}_j u,\text{ in }\mathscr{D}'(I\times\R^3), 
\end{align*}
where we straightforwardly extended the definition of the spatial localization operators $\dot{\Delta}_j$ to $\mathcal{S}'(\R\times\R^3)$.
In the context of homogeneous Besov space, we have then the following definition.
  \begin{defi}
Fix $T>0$, $s \in \R$, $r,p,q \in [1,\infty]$. The space $\widetilde{\L}^r_T \dot{\B}^s_{p,q}(\mathbf{R}^3)$ is the subspace of $\mathcal{S}_{h,T}(\R^3)$ whose elements $u$ satisfy 
  \[
 \|u\|_{\widetilde{\L}_T^r \dot{\B}_{p,q}^s(\R^3)} := \| 2^{js} \| \dot{\Delta}_j u\|_{\mathrm{L}^r(0,T;\mathrm{L}^p(\R^3))} \|_{\ell^q(\mathbf{Z})}<\infty.
\]
\end{defi}
Just as for the usual homogeneous Besov spaces, equipped with this norm and considered as a subspace of $\mathscr{D}'(I\times\R^3)$, the space $\widetilde{\L}^r_T \dot{\B}^s_{p,q}(\mathbf{R}^3)$ is a Banach space as soon as $s<3/p$ or $(s,q)=(3/p,1)$. Of course, a perfectly similar definition holds for $\widetilde{\L}^r_T\B^s_{p,q}(\R^3)$ in the non-homogeneous case. 

\medskip

As explained in \cite[Section 5.6]{bcd}, the heat flow is naturally related to the following homogeneous spaces, for $p>3$ \begin{align}\label{def:BpT}
\dot{\mathcal{B}}_p(T):=\widetilde{\L}^\infty_T\dot{\B}^{-1+3/p}_{p,\infty}(\R^3)\cap \widetilde{\L}^1_T\dot{\B}^{1+3/p}_{p,\infty}(\R^3),\end{align}
which is equipped with the norm 
\begin{align}\label{def:BpTnorm}
\|\cdot\|_{\dot{\mathcal{B}}_p(T)}:=\|\cdot\|_{\widetilde{\L}^\infty_T\dot{\B}^{-1+3/p}_{p,\infty}(\R^3)} + \|\cdot\|_{\widetilde{\L}^1_T\dot{\B}^{1+3/p}_{p,\infty}(\R^3)}.\end{align}
The following proposition will be useful. 
\begin{prop}\label{prop:useful}
For $T>0$ and $p>3$, we have the two continuous embeddings $\dot{\mathcal{B}}_p(T)\hookrightarrow \widetilde{\L}^2_T\dot{\B}_{p,\infty}^{3/p}(\R^3)$  and $\dot{\mathcal{B}}_p(T)\cap \mathrm{L}^\infty(0,T;\mathrm{L}^2(\R^3)) \hookrightarrow \mathrm{L}^\infty(0,T;\B_{p,\infty}^{-1+3/p}(\R^3))$.
\end{prop}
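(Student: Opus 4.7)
\textbf{Proof proposal for Proposition~\ref{prop:useful}.}

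Both embeddings follow by elementary manipulations on the spectral blocks. The plan for the first embedding is to perform a Chemin--Lerner interpolation block by block: for each $j\in\mathbf{Z}$, the interpolation inequality in time
\[
\|\dot{\Delta}_j u\|_{\mathrm{L}^2(0,T;\mathrm{L}^p(\R^3))}^2 \;\leq\; \|\dot{\Delta}_j u\|_{\mathrm{L}^\infty(0,T;\mathrm{L}^p(\R^3))}\,\|\dot{\Delta}_j u\|_{\mathrm{L}^1(0,T;\mathrm{L}^p(\R^3))}
\]
combined with the two defining bounds $\|\dot{\Delta}_j u\|_{\mathrm{L}^\infty_T \mathrm{L}^p} \leq 2^{j(1-3/p)} \|u\|_{\widetilde{\L}^\infty_T \dot{\B}^{-1+3/p}_{p,\infty}}$ and $\|\dot{\Delta}_j u\|_{\mathrm{L}^1_T \mathrm{L}^p} \leq 2^{-j(1+3/p)} \|u\|_{\widetilde{\L}^1_T \dot{\B}^{1+3/p}_{p,\infty}}$ yields, after multiplication by $2^{3j/p}$ and square-rooting,
\[
2^{3j/p}\|\dot{\Delta}_j u\|_{\mathrm{L}^2(0,T;\mathrm{L}^p(\R^3))} \;\leq\; \bigl(\|u\|_{\widetilde{\L}^\infty_T \dot{\B}^{-1+3/p}_{p,\infty}}\,\|u\|_{\widetilde{\L}^1_T \dot{\B}^{1+3/p}_{p,\infty}}\bigr)^{1/2}.
\]
Taking the supremum over $j\in\mathbf{Z}$ and applying the arithmetic-geometric mean inequality then gives the first embedding, with the continuity constant depending only on $p$.

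For the second embedding, the plan is to split $\|u(t)\|_{\B^{-1+3/p}_{p,\infty}}$ as $\|\Delta_{-1}u(t)\|_p + \sup_{j\geq 0} 2^{j(-1+3/p)}\|\Delta_j u(t)\|_p$. The high-frequency part is immediate: for $j\geq 0$ one has $\Delta_j = \dot{\Delta}_j$, so for almost every $t$ the high-frequency supremum is bounded by $\|u\|_{\widetilde{\L}^\infty_T \dot{\B}^{-1+3/p}_{p,\infty}}$ uniformly in $t$. For the low-frequency piece, I would use that $\Delta_{-1}$ is the convolution with a fixed Schwartz function (namely $\mathcal{F}^{-1}\chi$) and invoke Young's convolution inequality: since $p>3>2$, the Hölder conjugate $r$ defined by $1+1/p = 1/r + 1/2$ lies in $(1,\infty)$, so $\|\Delta_{-1}u(t)\|_p \lesssim \|u(t)\|_2$. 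Taking the essential supremum in time of the sum of the two pieces yields the embedding.

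I do not anticipate any serious obstacle; the only mild subtlety is handling the non-homogeneous block $\Delta_{-1}$ in the second embedding, for which one must use the extra $\mathrm{L}^\infty(0,T;\mathrm{L}^2)$ hypothesis together with Young's inequality rather than any spectral information on $u$ at low frequencies. Both arguments are uniform in $T$ and give explicit constants depending only on $p$ and on the choice of the dyadic partition of unity.
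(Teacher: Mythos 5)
Your proof is correct and follows essentially the same route as the paper: the first embedding by the block-wise interpolation $\|\dot{\Delta}_j u\|_{\mathrm{L}^2_T\mathrm{L}^p}^2\leq \|\dot{\Delta}_j u\|_{\mathrm{L}^\infty_T\mathrm{L}^p}\|\dot{\Delta}_j u\|_{\mathrm{L}^1_T\mathrm{L}^p}$, and the second by observing that only the low-frequency block $\Delta_{-1}$ needs the $\mathrm{L}^\infty_T\mathrm{L}^2$ hypothesis, handled via Young's inequality. The paper merely states these two steps without detail, so your write-up is a faithful (and more explicit) version of its argument.
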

\begin{proof}
The first embedding is obtained by interpolation. For the second one, we first note that $\widetilde{\L}_T^\infty\dot{\B}_{p,\infty}^{-1+3/p}(\R^3) = \mathrm{L}^\infty(0,T;\dot{\B}^{-1+3/p}_{p,\infty}(\R^3))$, so that we actually only need to prove the continuous embedding $\dot{\B}^{-1+3/p}_{p,\infty}(\R^3)\cap\mathrm{L}^2(\R^3)\hookrightarrow \B_{p,\infty}^{-1+3/p}(\R^3)$, which boils down to the use of Young's inequality to check that $\Delta_{-1}$ maps continuously $\mathrm{L}^2(\R^3)$ to $\mathrm{L}^p(\R^3)$. $\qedhere$
\end{proof}

For these spaces where the third index is infinite, functions having a spectral decay at infinity (like Schwartz functions for instance) are not dense in general. A narrower space introduced by Chemin in \cite{chemincpam,cheminJAM} consists precisely in taking the closure of smooth functions for this norm. Following the notations of \cite{barker}, we end this paragraph with the following definitions.
\begin{defi}\label{def:bpp}
For $p\in(3,\infty)$, $\dot{\mathbb{B}}_{p,\infty}^{-1+3/p}(\R^3)$ is the closure of $\mathcal{S}(\R^3)$ for the $\dot{\B}_{p,\infty}^{-1+3/p}(\R^3)$ norm. 
\end{defi}
The important difference between $\dot{\B}_{p,\infty}^{-1+3/p}$ and $\dot{\mathbb{B}}_{p,\infty}^{-1+3/p}(\R^3)$   is that elements of the latter satisfy  
\begin{equation}\label{eq:propB}
\lim_{j \to +\infty}  2^{j(-1+3/p)} \| \Delta_j u \|_{p} = 0.
\end{equation}
\begin{defi}\label{def:bpTp}
For $T>0$ and $p>3$, $\dot{\mathbb{B}}_p(T)$ is the closure of $\mathscr{C}^\infty([0,T];\mathcal{S}(\R^3))$ for the $\dot{\mathcal{B}}_p(T)$ norm defined in \eqref{def:BpTnorm}.
\end{defi}
Again, the core difference between $\dot{\mathcal{B}}_p(T)$ and $\dot{\mathbb{B}}_p(T)$ is that elements of the latter satisfy  
\begin{equation}
\label{eq:propBp}
\lim_{j \to +\infty} \left( 2^{j(-1+3/p)} \| \Delta_j u \|_{\mathrm{L}^\infty(0,T; \mathrm{L}^p(\R^3))} + 2^{j(1+3/p)} \| \Delta_j u \|_{\mathrm{L}^1(0,T; \mathrm{L}^p(\R^3))}  \right)=0.
\end{equation}
\begin{rem}\label{rem:trunc}
  Be it for $\dot{\mathbb{B}}_{p,\infty}^{-1+3/p}(\R^3)$ or $\dot{\mathbb{B}}_p(T)$, it is equivalent to ask an approximation by functions having a compactly supported Fourier transform. In particular, on each of these spaces, the spectral projections $(S_j)_j$ converge pointwisely to the identity map. 
\end{rem}

  \subsection{Relevancy of $\dot{\mathbb{B}}_p(T)$ with regards to the well-approximation property}
  \label{sec:besapp}
As apparent in \cite{chemincpam}, it turns out that any vector field belonging to the space $\dot{\mathbb{B}}_p(T)$ defined in Definition~\ref{def:bpTp} is indeed well-approximated in the sense of Definition \ref{def:well}.

\begin{prop}
\label{prop:Besovwell}
 Let $T>0$. All $u \in \dot{\mathbb{B}}_p(T)$ are well-approximated on $[0,T]$.
\end{prop}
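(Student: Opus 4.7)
The plan follows Chemin's strategy from \cite{chemincpam}, which was devised precisely for Cannone--Meyer--Planchon regularity. First I would choose $\psi\in\mathcal{S}(\R^3)$ whose Fourier transform $\widehat\psi$ is supported in $\{|\xi|\le 1\}$ and identically $1$ on $\{|\xi|\le 1/2\}$; then $\psi_j(x):=8^j\psi(2^j x)$ is a smooth dyadic approximate identity and $u\star\psi_j$ is spectrally localised in $\{|\xi|\le 2^j\}$ while coinciding with $u$ on $\{|\xi|\le 2^{j-1}\}$, so it behaves as a smoothed Littlewood--Paley truncation $S_j u$ to which the standard dyadic machinery applies.

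For the first well-approximation condition, I would introduce the Chemin--Lerner dyadic norms $\alpha_k := 2^{k(-1+3/p)}\|\dot{\Delta}_k u\|_{\widetilde{\L}^\infty_T\L^p}$ and $\gamma_k := 2^{k(1+3/p)}\|\dot{\Delta}_k u\|_{\widetilde{\L}^1_T\L^p}$; the hypothesis $u\in\dot{\mathbb{B}}_p(T)$ forces $(\alpha_k),(\gamma_k)\in c_0(\N)$ via property~\eqref{eq:propBp}. Bernstein's inequality gives the per-block bound $\|\nabla\dot{\Delta}_k u\|_{\L^1_T\L^\infty}\lesssim \gamma_k$, so that $\int_0^T\|\nabla(u\star\psi_j)\|_\infty\,\dd s$ is already dominated by a partial sum of a $c_0$ sequence, the low-frequency contribution $k<0$ summing to a geometric constant thanks to $p>3$. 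For the squared $\L^2_T\L^\infty$ term, time interpolation between the two Chemin--Lerner spaces provides the per-block estimate $\|\dot{\Delta}_k u\|_{\L^2_T\L^\infty}^2\lesssim\alpha_k\gamma_k$, and a suitable combinatorial arrangement (cf.~\cite{chemincpam}) packages the cumulative bound as $\sum_{k=0}^j a_k$ with $(a_k)\in c_0(\N)$.

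For the second condition, I would exploit the commutator identity (which uses $\int\psi_j=1$):
\[(u\star\psi_j)\otimes(u\star\psi_j) - (u\otimes u)\star\psi_j = -\int_{\R^3}\psi_j(y)\bigl[u(\cdot-y)-(u\star\psi_j)(\cdot)\bigr]^{\otimes 2}\,\dd y.\]
Each finite difference inside rewrites as $\int\psi_j(z)[u(\cdot-y)-u(\cdot-z)]\,\dd z$, gaining one spatial derivative in the Besov scale. Combined with the embedding $\dot{\mathcal{B}}_p(T)\hookrightarrow\widetilde{\L}^2_T\dot{\B}^{3/p}_{p,\infty}(\R^3)$ from Proposition~\ref{prop:useful} and Bernstein's inequality, this yields the required $O(2^{-\alpha j})$ decay with $\alpha=\alpha(p)>0$.

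The hard part will be the packaging step in the first condition: assembling the per-block bounds so that the full squared $\L^2_T\L^\infty$ quantity is dominated by a single partial sum of a $c_0$ sequence crucially exploits the vanishing of $\alpha_k,\gamma_k$ at infinity, a feature of the closure $\dot{\mathbb{B}}_p(T)$ unavailable in $\dot{\mathcal{B}}_p(T)$. The commutator estimate for the second condition, by contrast, is classical once the paraproduct framework is in place.
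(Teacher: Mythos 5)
Your proposal follows the paper's proof almost verbatim for the first condition: decompose $u\star\psi_j$ into dyadic blocks, use Bernstein to get the per-block bounds $\|\nabla\dot{\Delta}_k u\|_{\mathrm{L}^1_T\mathrm{L}^\infty}\lesssim 2^{k(1+3/p)}\|\dot{\Delta}_k u\|_{\widetilde{\L}^1_T\mathrm{L}^p}$ and $\|\dot{\Delta}_k u\|^2_{\mathrm{L}^2_T\mathrm{L}^\infty}\lesssim \alpha_k\gamma_k$ (the latter by Cauchy--Schwarz in time), and then invoke the vanishing property~\eqref{eq:propBp} of the closure $\dot{\mathbb{B}}_p(T)$ to identify the partial sums of a $\mathrm{c}_0$ sequence; this is exactly the paper's computation, and you are right that the delicate point is passing from $\bigl(\sum_{k\le j}\|\dot{\Delta}_k u\|_{\mathrm{L}^2_T\mathrm{L}^\infty}\bigr)^2$ to a sum of the squares --- the paper is equally terse there and ultimately leans on \cite{chemincpam} for that packaging, just as you do. The one genuine difference is the second condition: where the paper simply combines Proposition~\ref{prop:useful} with the commutation estimate of \cite[Lemma 2.2]{chemincpam} as a black box, you re-derive it from the exact tensor-square identity
\[(u\star\psi_j)\otimes(u\star\psi_j) - (u\otimes u)\star\psi_j = -\int_{\R^3}\psi_j(y)\bigl[u(\cdot-y)-(u\star\psi_j)(\cdot)\bigr]^{\otimes 2}\,\dd y,\]
which is correct (it is the Constantin--E--Titi/DiPerna--Lions commutator in disguise, and is essentially how Chemin's lemma is proved). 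This buys self-containedness, but to actually close it you still need to interpolate between $\mathrm{L}^\infty(0,T;\mathrm{L}^2)$ and $\widetilde{\L}^2_T\dot{\B}^{3/p}_{p,\infty}(\R^3)$ so that each finite difference lands in an $\mathrm{L}^4$-based space with a positive power of $2^{-j}$; stated as ``gaining one spatial derivative'' this is too optimistic, since $u$ only has fractional positive regularity $3/p$ in $\mathrm{L}^p$, but the interpolation does yield $\alpha=\alpha(p)>0$ (Chemin's lemma gives $\alpha=2/(p-2)$ for the squared norm), so the approach goes through.
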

\begin{proof}

Let us start by checking the first property. Since $S_j = \sum_{j'=-1}^j \Delta_j $, by Bernstein's inequality, there holds
\begin{align*}
    \int_0^T \| \nabla S_j u\|_\infty \dd s \lesssim  \sum_{j'=-1}^j 2^{j'} \int_0^T \| \Delta_{j'} u\|_\infty \dd s \lesssim  \sum_{j'=-1}^j 2^{j'(1+3/p)} \int_0^T \| \Delta_{j'} u\|_p \dd s,
\end{align*}
and we rely on~\eqref{eq:propBp} to conclude. Likewise,
\begin{align*}
    \int_0^t \|  S_j u\|^2_\infty \dd s &\lesssim   \sum_{j'=-1}^j  \int_0^t \| \Delta_{j'} u\|^2_\infty \dd s \lesssim   \sum_{j'=-1}^j  2^{6j/p} \int_0^t \| \Delta_{j'} u\|^2_p \dd s \\
    &\lesssim \sum_{j'=-1}^j  \left[ 2^{j'(-1+3/p)} \| \Delta_{j'} u \|_{\mathrm{L}^\infty(0,T; \mathrm{L}^p)} \right] \left[2^{j'(1+3/p)} \| \Delta_{j'} u \|_{\mathrm{L}^1(0,T; \mathrm{L}^p)} \right],
\end{align*}
and we again use~\eqref{eq:propBp}.
For the second property, we can combine Proposition~\ref{prop:useful} and the commutation estimate from \cite[Lemma 2.2]{chemincpam} to infer:
\begin{lem}
For $u \in {\L}^\infty(0,T;\mathrm{L}^2(\R^3)) \cap \dot{\mathcal{B}}_p(T)$, there holds, for all $j \in \N$,
\begin{equation*}
\| S_j u \otimes S_j u - S_j (u \otimes u) \|_{\mathrm{L}^2(0,T;\mathrm{L}^2)} 
\lesssim 2^{-\frac{j}{p-2}} \| u \|_{{\L}^\infty(0,T;\mathrm{L}^2(\R^3)) \cap \dot{\mathcal{B}}_p(T)}^2. 
\end{equation*}
\end{lem}
 The proof of Proposition~\ref{prop:Besovwell} is therefore complete. $\qedhere$
 
\end{proof}

   \section{Further regularity for Leray solutions and an estimate of $\mathcal{N}_{T,R}(f^\init)$}
\label{sec:leray+}
As recalled in the introduction, for any admissible initial data $(f^\init,u^\init)$ in the sense of Definition~\ref{def:admi}, the VNS system admits at least one weak Leray solution. Just as in \cite{hkm3}, our analysis starts with a simple observation: under a slightly stronger assumption for the kinetic initial data, these Leray solutions actually enjoy finer bounds. More precisely, we have the following result.

           \begin{lem}\label{lem:reg}
Consider an admissible initial condition $(f^\init,u^\init)$ in the sense of Definition~\ref{def:admi} and assume furthermore that 
$M_6 f^\init < \infty$
and that $N_q(f^\init)<\infty$ for some $q>5$. Then, any associated Leray solution $(f,u)$ of the \textup{VNS} system satisfies
\begin{itemize}
             \item[$(i)$] $u\in\Ll^1(\R_+;\mathrm{L}^\infty(\R^3))$, with for all $T>0$,
             $$
             \| u\|_{\mathrm{L}^1(0,T;\mathrm{L}^\infty(\R^3))} \leq C_{T,\E^\init,M_6 f^\init}.
             $$
             \item[$(ii)$] $m_k f\in\Ll^\infty(\R_+;\mathrm{L}^\infty(\R^3))$ for $k\in\{0,1,2\}$, with for all $T>0$,
             $$
             \| m_k \|_{\mathrm{L}^\infty(0,T;\mathrm{L}^\infty(\R^3))} \leq C_{T,\E^\init,M_6 f^\init,N_q(f^\init)}, \qquad q>5.
             $$
             \end{itemize}
           \end{lem}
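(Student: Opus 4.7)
The plan is to prove Lemma~\ref{lem:reg} by a bootstrap argument coupling fluid regularity with kinetic moments, exploiting (a) the energy-dissipation structure of the Brinkman force, (b) moment propagation for the Vlasov equation driven by $M_6 f^\init < \infty$, and (c) the velocity-decay encoded by $N_q(f^\init)$ for the pointwise statement. I first record the baseline Leray estimates from \eqref{ineq:nrj}: $u \in \mathrm{L}^\infty_t\mathrm{L}^2 \cap \mathrm{L}^2_t \dot{\H}^1$, $\iint f|v-u|^2 \in \mathrm{L}^1_t$, and $M_k f \in \mathrm{L}^\infty_t$ for $k \leq 2$. Since $\nabla_v\cdot(v, u-v) = -3$, the Diperna--Lions flow gives $\|f(t)\|_\infty \leq e^{3t}\|f^\init\|_\infty$. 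Next, the moment ODE $\frac{\dd}{\dd t} M_k f + k M_k f = k \int u \cdot \bigl(\int f v |v|^{k-2} \,\dd v\bigr)\dd x$, combined with interpolation between $\|f\|_\infty$ and higher moments and the Leray Sobolev bound $u \in \mathrm{L}^2_t\mathrm{L}^6$, closes a Gronwall estimate on $M_6 f$ depending on $T$, $\E^\init$, and $M_6 f^\init$.

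For assertion $(i)$, I would use that $|F| \leq \rho_f^{1/2}(\int f|v-u|^2 \dd v)^{1/2}$, combined with interpolation estimates such as $\|\rho_f\|_{\mathrm{L}^3} \lesssim \|f\|_\infty^{2/3}(M_6 f)^{1/3}$ and analogous bounds for $j_f$, to place the Brinkman force in appropriate mixed Lebesgue spaces $\mathrm{L}^r_t \mathrm{L}^p_x$ with $p > 3/2$. Writing the Duhamel formula
\[
u(t) = e^{t\Delta} u^\init + \int_0^t e^{(t-s)\Delta}\mathbf{P}[F - u\cdot\nabla u](s)\,\dd s
\]
and applying $\mathrm{L}^p$ heat-semigroup estimates $\|e^{t\Delta}\mathbf{P}g\|_\infty \lesssim t^{-3/(2p)}\|g\|_p$, together with bilinear estimates on $u\cdot\nabla u$ in Leray classes, yields $u \in \mathrm{L}^1(0,T;\mathrm{L}^\infty)$ with the announced quantitative dependence. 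If needed, the nonlinear damping $-\rho_f u$ (which appears naturally after rewriting $F = j_f - \rho_f u$) can be absorbed on the left-hand side to play the role of a bounded zero-order friction.

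For assertion $(ii)$, once $\alpha(T) := \|u\|_{\mathrm{L}^1(0,T;\mathrm{L}^\infty)}$ is finite, the backward characteristics satisfy $V(0;t,x,v) = e^{-t}v + \int_0^t e^{-\tau} u(\tau, \X(\tau))\,\dd\tau$, hence $|V(0)| \geq e^{-t}|v| - \alpha(t)$. Since $f(t) = e^{3t} \Z(0;t)\# f^\init$ and $f^\init(x,v) \leq N_q(f^\init)(1+|v|)^{-q}$, splitting according to whether $e^{-t}|v| \geq 2\alpha(t)$ yields the pointwise bound
\[
f(t,x,v) \lesssim e^{Ct}(1+\alpha(t))^q \bigl(\|f^\init\|_\infty + N_q(f^\init)\bigr)(1+|v|)^{-q}.
\]
Since $q > 5 \geq k+3$ for $k \in \{0,1,2\}$, integrating in $v$ gives $\|m_k f(t)\|_\infty$ bounded by the same right-hand side, which proves $(ii)$ with the dependencies announced.

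The main obstacle is the $\mathrm{L}^1\mathrm{L}^\infty$ fluid bound underlying $(i)$: this is much stronger than what is standard for Leray solutions of 3D Navier--Stokes, and only becomes accessible here thanks to the specific integrability of the Brinkman source under $M_6 f^\init < \infty$ together with the favorable sign of the damping $-\rho_f u$. The interplay between parabolic smoothing for the forcing and the energy-based treatment of the Navier--Stokes nonlinearity is the technical heart of the argument; a standard short-time smallness plus continuation then propagates the estimate to arbitrary $T > 0$.
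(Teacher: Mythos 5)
Your overall architecture (Duhamel formula, Brinkman force controlled through $M_6 f^\init$, then characteristics plus the decay $N_q(f^\init)$ for the moments) is the same as the paper's, and your treatment of $(ii)$ is essentially the argument the paper delegates to \cite[Proposition 4.6]{hkm3} (modulo a forward/backward flow mix-up: along the \emph{backward} characteristics one has $|\V(0;t,x,v)|\geq e^{t}|v|-e^{t}\alpha(t)$, not $e^{-t}|v|-\alpha(t)$; this is harmless on $[0,T]$). The genuine gap is in $(i)$, and it sits exactly where you locate the ``technical heart'': the convection term. For a Leray solution one only has $u\otimes u\in \mathrm{L}^{4/3}(0,T;\mathrm{L}^2)\cap\mathrm{L}^1(0,T;\mathrm{L}^3)$ (and $u\cdot\nabla u\in \mathrm{L}^{q}_t\mathrm{L}^{r}_x$ only with $r\leq 3/2$), so the pointwise semigroup estimate $\|e^{\tau\Delta}\mathbf{P}\,\mathrm{div}\,h\|_\infty\lesssim \tau^{-1/2-3/(2p)}\|h\|_p$ produces a singularity $\tau^{-\gamma}$ with $\gamma\geq 1$, which is not locally integrable; ``bilinear estimates on $u\cdot\nabla u$ in Leray classes'' does not name any estimate that closes this. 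The paper avoids the issue by never taking an $\mathrm{L}^p\to\mathrm{L}^\infty$ bound directly: it works frequency by frequency, uses $\|\Delta_j e^{\tau\Delta}g\|_2\lesssim e^{-c\tau 2^{2j}}\|\Delta_j g\|_2$, Young in time to gain $2^{-2j}$, a Cauchy--Schwarz over $j$, and the embedding $\dot{\B}^{3/2}_{2,1}\hookrightarrow\mathrm{L}^\infty$ (Proposition~\ref{prop:besnegborn}); for the convection term this is exactly Chemin's lemma \cite[Lemme 3.2]{chem}, which is cited rather than reproved. Some such Littlewood--Paley mechanism (or an equivalent maximal-regularity argument) is indispensable here, and your proposal does not supply it.

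A secondary, fixable imprecision: the pointwise bound $|F|\leq\rho_f^{1/2}\big(\int f|v-u|^2\,\dd v\big)^{1/2}$ combined with $\rho_f\in\mathrm{L}^\infty_t\mathrm{L}^3_x$ only yields $F\in\mathrm{L}^2_t\mathrm{L}^{3/2}_x$, i.e.\ exactly the endpoint $p=3/2$ where $\tau^{-3/(2p)}=\tau^{-1}$ again fails to be integrable; you assert ``$p>3/2$'' without producing it. What actually saves the Brinkman term is $F\in\Ll^2(\R_+;\mathrm{L}^2(\R^3))$ (this is where $M_6 f^\init$ enters, via \cite[Lemma 4.7]{ehkm} after propagating the sixth moment), for which $\tau^{-3/4}$ is integrable and your strategy would go through. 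Finally, the claim that the damping $-\rho_f u$ can be ``absorbed on the left'' and the short-time-smallness-plus-continuation scheme are not needed and not used in the paper: the estimate is linear in the Duhamel terms once $F\in\mathrm{L}^2_t\mathrm{L}^2_x$ and Chemin's lemma are in hand.
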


           \begin{proof}
Let us focus on the property $(i)$ as building on it, we can establish $(ii)$ exactly as in \cite[Proposition 4.6]{hkm3} (see also the proof of Lemma \ref{lem:NR} hereafter). We start with the Duhamel formula
\begin{align}\label{eq:duhamel}  u = e^{t\Delta}u_0 + \int_0^t e^{(t-s)\Delta} \mathbf{P} \big[(u \cdot \nabla) u\big](s)\,\dd s + \int_0^t e^{(t-s)\Delta} \mathbf{P}F(s)\,\dd s,
\end{align}
where $F:=j_f-\rho_f u$ is the Brinkman force. The two first terms of right hand side obviously belong to some $\Ll^1(\R_+;\mathrm{L}^r(\R^3))$ for some exponent $r\in[1,\infty]$ (using the regularity of the heat flow and the fact that $u$ is a Leray solution). Relying on Proposition~\ref{prop:besnegborn} of Appendix~\ref{sec:lp} (to which we refer for the definition of the Besov space hereafter used), controlling these terms in $\Ll^1(\R_+;\dot{\B}_{2,1}^{3/2}(\R^3))$ is sufficient to deduce a bound in $\Ll^1(\R_+;\mathrm{L}^\infty(\R^3))$. This can be done by directly using \cite[Lemme 3.2]{chem}. It remains therefore to treat the last term of \eqref{eq:duhamel}, that is the one involving the Brinkman force $F$. 

\medskip

First, proceeding exactly as in \cite[Lemma 4.7]{ehkm}, we can check that the Brinkman force $F$ actually belongs to $\Ll^2(\R_+;\mathrm{L}^2(\R^3))$. This is the only step in which we crucially use the extra integrability assumption for the sixth moment of the kinetic initial data. Then obviously 
\[I(F) := \int_0^t e^{(t-s)\Delta} \mathbf{P} F(s)\,\dd s,\]
  belongs to $\Ll^1(\R_+;\mathrm{L}^2(\R^3))$ (as the $\mathrm{L}^2(\R^3)$ norm decays along the heat flow). Using the decomposition (see Subsection~\ref{subsec:loc} of the appendix) 
  \begin{align*}
I(F) = \chi \star I(F) +\sum_{j\in\N} \Delta_j I(F),
  \end{align*}
  Hölder's inequality allows to focus on the sum, as \[\|\chi \star I(F)(s)\|_\infty\lesssim \|I(F)(s)\|_{\mathrm{L}^2(\R^3)},\]
  and we have seen that this last upper bound is locally integrable in time. For all $j\in\mathbf{N}$, we write 
  \begin{align*}
    \left\|\Delta_j I(F)(t) \right\|_{\mathrm{L}^2(\R^3)} &\leq \int_0^t \|\Delta_j e^{(t-s)\Delta} \mathbf{P}F(s) \|_{\mathrm{L}^2(\R^3)} \,\dd s \\
                                                                                &\lesssim \int_0^t e^{-c(t-s) 2^{2j}} \|\Delta_j \mathbf{P}F(s)\|_{\mathrm{L}^2(\R^3)}\,\dd s.
  \end{align*}
  where we have applied \cite[Lemma 2.4]{bcd} for the second inequality. 
  Thanks to Young's inequality for convolutions (in time), we thus deduce for all $j\geq 0$
  \begin{align*}
   \int_0^T \left\|\Delta_j I(F)(t) \right\|_{\mathrm{L}^2(\R^3)}\,\dd t\lesssim \frac{1}{2^{2j}} \int_0^T \|\Delta_j \mathbf{P}F(s)\|_{\mathrm{L}^2(\R^3))}\,\dd s,
  \end{align*}
whence, by Cauchy-Schwarz and the continuity of the Leray projector on $\mathrm{L}^2(\R^3)$,
  \begin{align*}
    \int_0^T \sum_{j\in\N} 2^{\frac{3}{2}j} \left\|\Delta_j I(F)(t) \right\|_{\mathrm{L}^2(\R^3)}\,\dd t &\lesssim \int_0^T \sum_{j\geq 0} \frac{1}{2^{\frac12 j}}\|\Delta_j \mathbf{P} F(s)\|_{\mathrm{L}^2(\R^3)}\,\dd s\\
                                                    &\lesssim   \int_0^T \left(\sum_{j\geq 0} \|\Delta_j \mathbf{P}F(s)\|_{\mathrm{L}^2(\R^3)}^2 \right)^{1/2}\,\dd s\\                                                                         &\lesssim  \int_0^T \|F(s)\|_{\mathrm{L}^2(\R^3)}\,\dd s.
  \end{align*}
The conclusion follows just as in the proof of Proposition~\ref{prop:besnegborn}: we use Bernstein inequality (Lemma~\ref{lem:bernstein}) to bound $\|\Delta_j I(F)(t)\|_{\mathrm{L}^\infty(\R^3)}$ by $2^{\frac{3}{2}j}\|\Delta_j I(F)(t)\|_{\mathrm{L}^2(\R^3)}$, and this establishes the normal convergence of the series $\sum_{j\in\N} \Delta_j I(F)$ in $\mathrm{L}^1(0,T;\mathrm{L}^\infty(\R^3))$.
           \end{proof}

           \medskip
           
From now on, we let $\overline{f}\in \mathrm{L}^1(\R^3)\cap \mathrm{L}^\infty(\R^3)$ be 
such that $M_1(|\overline{f}|)$ is finite and $N_q(|\overline{f}|)$ is finite for some $q>4$. We have the followong property.
\begin{lem}
\label{lem:NR}
Recalling $\mathcal{N}_{T,R}$ introduced in Definition~\ref{def:NR},
there holds
\[\mathcal{N}_{T,R}(\overline{f})\lesssim_{T,R} (N_q(|\overline{f}|)+ M_1(|\overline{f}|)+\|\overline{f}\|_{\mathrm{L}^1}).\]
\end{lem}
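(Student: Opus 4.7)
The plan is to exploit the explicit Lagrangian representation $f_u(t,x,v) = e^{3t}\overline{f}(\Z_u(0,t,x,v))$ recalled right before the lemma, together with the observation that the vector field $(v,u(t,x)-v)$ has constant divergence $-3$, so Liouville's theorem yields that the global Jacobian of $(x,v)\mapsto \Z_u(0,t,x,v)$ equals $e^{-3t}$. The $\mathrm{L}^1$ estimates will then follow at once from a global change of variables, whereas the $\mathrm{L}^\infty$ estimates demand a Bardos--Degond-type splitting of the velocity integral that only uses the $\mathrm{L}^1_t\mathrm{L}^\infty_x$ bound on $u$ coming from $u\in\mathcal{B}_R$.

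\textbf{$\mathrm{L}^1$ bounds.} Performing the change of variables $(x,v)\mapsto(y,w):=\Z_u(0,t,x,v)$ gives immediately $\|m_0|f_u|(t)\|_{\mathrm{L}^1_x} = \|f_u(t)\|_{\mathrm{L}^1_{x,v}} = \|\overline{f}\|_{\mathrm{L}^1}$. For the first moment, after the same change of variables the factor $|v|$ is replaced by $|V_u(t,0,y,w)|$; solving the linear ODE $\partial_s V + V = u$ starting from $w$ at $s=0$ explicitly yields $V_u(t,0,y,w) = e^{-t}w + \int_0^t e^{\sigma-t}u(\sigma,X_u(\sigma,0,y,w))\,\dd\sigma$, hence $|V_u(t,0,y,w)|\leq |w|+R$ by definition of $\mathcal{B}_R$. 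This gives the bound $\|m_1|f_u|(t)\|_{\mathrm{L}^1_x}\leq M_1(|\overline{f}|) + R\|\overline{f}\|_{\mathrm{L}^1}$.

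\textbf{$\mathrm{L}^\infty$ bounds (the heart of the matter).} I would dualize the above ODE and integrate backwards in the velocity equation to obtain the closed form
\[
V_u(0,t,x,v) = e^t v - \int_0^t e^{\sigma}\, u(\sigma,X_u(\sigma,t,x,v))\,\dd\sigma,
\]
whence $|V_u(0,t,x,v)|\geq e^t |v| - e^T R$. Hence, for $|v|\geq 2e^{T-t}R$ one has $|V_u(0,t,x,v)|\geq e^t|v|/2$. Using $|\overline{f}|(y,w)\lesssim N_q(|\overline{f}|)(1+|w|)^{-q}$ the pointwise representation yields
\[
|f_u|(t,x,v)\lesssim_{T,R}\; e^{3t}N_q(|\overline{f}|)(1+|v|)^{-q}\qquad\text{on }\{|v|\geq 2e^{T-t}R\},
\]
while on the inner ball $\{|v|<2e^{T-t}R\}$ the trivial bound $|f_u|\leq e^{3t}\|\overline{f}\|_\infty \leq e^{3t}N_q(|\overline{f}|)$ combined with the bounded volume suffices. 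Integrating $|v|^k|f_u|(t,x,v)$ over $v$ for $k\in\{0,1\}$, the tail integral $\int|v|^k(1+|v|)^{-q}\,\dd v$ converges precisely under $q>k+3$, which is ensured by the hypothesis $q>4$. This gives $\|m_k|f_u|(t)\|_\infty\lesssim_{T,R} N_q(|\overline{f}|)$ uniformly in $u\in\mathcal{B}_R$, and summing with the $\mathrm{L}^1$ part concludes the proof.

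\textbf{Main obstacle.} The principal subtlety is that $u\in\mathcal{B}_R$ provides no gradient control on $u$, so a naive change of variable $v\mapsto V_u(0,t,x,v)$ at fixed $x$ is unavailable (its Jacobian would require $\nabla u\in\mathrm{L}^1_t\mathrm{L}^\infty_x$). The crucial observation that sidesteps this difficulty is that the explicit ODE above furnishes a lower bound $|V_u(0,t,x,v)|\gtrsim_{T,R} e^t|v|$ outside a ball in $v$, using only the norm $\|u\|_{\mathrm{L}^1_t\mathrm{L}^\infty_x}\leq R$, so that the velocity decay $N_q$ of $\overline{f}$ can be transferred pointwise to $f_u$ without any appeal to Jacobian computations.
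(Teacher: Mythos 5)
Your proposal is correct and follows essentially the same route as the paper's proof: both rest on integrating the characteristic ODE using only $\|u\|_{\mathrm{L}^1(0,T;\mathrm{L}^\infty)}\leq R$, the resulting lower bound $|\V_u(0,t,x,v)|\geq e^t|v|-C_{T,R}$, the splitting of the velocity integral into an inner ball and a tail, and the condition $q>4$ for the first moment. The only (immaterial) differences are that you argue pointwise via the representation $f_u(t)=e^{3t}\,\overline{f}\circ \Z_u(0,t,\cdot)$ where the paper dualizes the moments against test functions and changes variables along the flow, and that the Jacobian of $(x,v)\mapsto \Z_u(0,t,x,v)$ is $e^{+3t}$ rather than $e^{-3t}$ (the backward flow expands phase-space volume), a sign slip that is harmless here since the push-forward conserves mass and any $t$-dependent constant is absorbed into $\lesssim_{T,R}$.
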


\begin{proof}
 W.l.o.g. we can assume that $\overline{f}$ is \emph{non-negative} (replacing it by $|\overline{f}|$). Let $R>0$ and $u \in \B_R$. In view of \eqref{eq:ODEs}, a straightforward estimate for the characteristics shows that
 \begin{align}
\label{ineq:Z} |\Z_u(t,0,x,v)- (x+(e^{-t}-1)v,e^{-t}v)|\leq \textnormal{C}_{T,R}.
 \end{align}
 Let's first treat the $\mathrm{L}^\infty(0,T;\mathrm{L}^1(\R^3))$ part of the estimate. For all $t\in[0,T]$, by definition of the push-forward measure we have, using estimate \eqref{ineq:Z} on the third line
  \begin{align*}
    \|m_0|f_u|(t)\|_{\mathrm{L}^1(\R^3)} &+ \|m_1|f_u|(t)\|_{\mathrm{L}^1(\R^3)} = M_0 |f_u|(t) + M_1 |f_u|(t) \\
                                                                &= \iint_{\R^3\times\R^3} \overline{f} + \iint_{\R^3\times\R^3} |\V_u(t,x,v)|\,  \overline{f}(x,v)\,\dd v\,\dd x\\
                                                                &\lesssim_{T,R} \|\overline{f}\|_1+ \iint_{\R^3\times\R^3} (1+|e^{-t}v|)\,\overline{f}(x,v)\,\dd v\,\dd x \\
    &\lesssim_{T,R} \|\overline{f}\|_1 + M_1 \overline{f}.
  \end{align*}
  Now, let's turn to the $\mathrm{L}^\infty(0,T;\mathrm{L}^\infty(\R^3))$ part of the estimate which is a bit more involved. Since  $f_u^t:=f_u(t) =\Z_u(t,0) \# \overline{f}$, we have for any test function $\ffi\in\mathscr{C}^\infty(\R^3)$, 
 \begin{align}
\label{rep1}   \int_{\R^3} \ffi(x) \,m_0 f_u^t(x)\,\dd x= \iint_{\R^3\times\R^3} \ffi(\X_u(t,0,x,v))\,\overline{f}(x,v)\,\dd v\,\dd x,\\
\label{rep2}\int_{\R^3} \ffi(x) \,m_1 f_u^t\,\dd x  = \iint_{\R^3\times\R^3} \ffi(\X_u(t,0,x,v))|\V_u(t,0,x,v)|\,\overline{f}(x,v)\,\dd v\,\dd x.
  \end{align}
  We infer from the above representation formulas \eqref{rep1} -- \eqref{rep2} for any test function $\ffi\in\mathscr{D}(\R^3)$
  \begin{align*}
    \Big|\int_{\R^3} \ffi(x)\,(m_0 f_u^t&+m_1 f_u^t)(x)\,\dd x\Big|\\
    &\leq N_q(\overline{f})\iint_{\R^3\times\R^3} \frac{|\ffi|(\X_u(t,0,x,v))}{1+|v|^q} (1+|\V_u(0,t,x,v)|)\,\dd x\,\dd v \\
    &= e^{3t} N_q(\overline{f})\iint_{\R^3\times\R^3}\frac{|\ffi|(x)}{1+|\V_u(t,0,x,v)|^q} (1+|v|)\,\dd x\,\dd v\\
                    &\lesssim_{T} N_q(\overline{f})\|\ffi\|_1  + N_q(\overline{f}) \iint_{\R^3\times\R^3} \frac{|\ffi|(x)}{1+|\V_u(0,t,x,v)|^q} |v|\,\dd x \,\dd v.
  \end{align*}
  Now, in the last integral we split the velocity phase component as $|e^t v|\gtrless \textnormal{C}_{T,R}$ to write for all $t$ and $x$, using \eqref{ineq:Z}
  \begin{align*}
    \int_{\R^3} \frac{1}{1+|\V_u(0,t,x,v)|^q} |v| \,\dd v &\lesssim_{T,R} 1+\int_{|e^t v|\geq \textnormal{C}_{T,R}} \frac{1}{1+|\V_u(0,t,x,v)|^q} |v|\,\dd v\\
    &\lesssim_{T,R} 1+\int_{|e^t v| \geq \textnormal{C}_{T,R}} \frac{1}{1+|e^t v|^q-\textnormal{C}_{T,R}} |v|\,\dd v
   <\infty,
  \end{align*}
 where $q>4$ is crucially used to get the finiteness of this last integral. Gathering all these estimates we have therefore for any test function $\ffi$ 
  \begin{align*}
    \left|\int_{\R^3} \ffi(x) \,(m_0 f_u^t+m_1 f_u^t)(x)\,\dd x \right| \lesssim_{T,R} \|\ffi\|_1 N_q(\overline{f}),
  \end{align*}
  which proves by duality
  \begin{align*}
\sup_{t\in[0,T]} \Big\{ \|m_0 f_u^t\|_{\mathrm{L}^\infty(\R^3)} + \|m_1 f_u^t\|_{\mathrm{L}^\infty(\R^3)} \Big\} \lesssim_{T,R} N_q(\overline{f}). \qquad \qedhere
  \end{align*}
\end{proof}


\section{Controlling the Wasserstein distance with the functional $Q_{\Z_1,\Z_2}$}\label{sec:contwas}

Here we relate the functional defined in \eqref{def:Q} with the Wasserstein distance $\W_1$ between the corresponding push-forward measures by the associated flows. More precisely, we consider two finite measures $f_1^\init$ and $f_2^\init$ on $\R^3\times \R^3$ sharing the same total mass and having finite two first moments. We fix $T>0$ and consider for $k\in\{1,2\}$, maps $\Z_k:[0,T]\times \R^3\times \R^3\to \R^3\times \R^3$, such that  for all $t\in [0,T]$, $\Z_k(t,\cdot,\cdot)$ is invertible on $\R^3\times \R^3$ and such that $\Z_k(0,x,v)=(x,v)$, for all $(x,v)$. Moreover, we assume that $t\mapsto \Z_1(t)=(\X_1(t),\V_1(t))$ is the unique Lipschitz solution to the ODE : 
\begin{equation}\label{eq:EDO-appendix}
    \begin{cases}
     \displaystyle \frac{\dd}{\dd t}{\X}_1(t,x,v)=\V_1(t,x,v),\quad \X_1(0,x,v)=x,\\
   \displaystyle   \frac{\dd}{\dd t}{\V}_1(t,x,v)=u(t,\X_1(t,x,v))-\V_1(t,x,v),\quad \V_1(0,x,v)=v,
    \end{cases}
\end{equation}
for some vector field $u\in \mathrm{L}^1([0,T];\W^{1,\infty}(\R^3))$ (see \eqref{eq:ODEs}). In particular, $\Z_1$ belongs to $\mathscr{C}^0([0,T];\mathscr{C}^{0,1}(\R^3\times\R^3))$. 
\begin{prop}\label{prop:norm-W}
For $k=1,2$, consider two finite measures $f_k^\init$ sharing the same total mass and finite two first moments. We set $f_k(t)=\Z_k(t)\#f_k^\init$ and define
$$Q_{\Z_1,\Z_2}(t)=\iint_{\R^3\times\R^3}|\Z_1(t,x,v)-\Z_2(t,x,v)|^2 \,\dd f_2^\init(x,v).$$
Then we have for $t\in[0,T]$
\begin{align*}
    \W_1(f_1(t),f_2(t))^2 \lesssim \W_1(f_1^\init,f_2^\init)^2 + Q_{\Z_1,\Z_2}(t),
\end{align*}
where the symbol $\lesssim$ depends only the norm of $u_1$ in $\mathrm{L}^1([0,T];\W^{1,\infty}(\R^3))$ and the shared mass of $f_1^\init$ and $f_2^\init$.
\end{prop}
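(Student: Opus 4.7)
The plan is to use the Kantorovich coupling definition of $\W_1$ together with the standard trick of interpolating between the two flows via an intermediate trajectory. Let $M$ denote the shared total mass of $f_1^\init$ and $f_2^\init$ and let $\pi^\init$ be an optimal transport plan on $(\R^3\times\R^3)^2$ realizing $\W_1(f_1^\init,f_2^\init)$, i.e.\ a non-negative measure with marginals $f_1^\init$ and $f_2^\init$. The natural candidate coupling between $f_1(t)$ and $f_2(t)$ is
\[
\pi(t) := \big(\Z_1(t,\cdot,\cdot),\Z_2(t,\cdot,\cdot)\big)\# \pi^\init,
\]
whose marginals are by construction $\Z_k(t)\# f_k^\init=f_k(t)$ for $k=1,2$. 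This gives
\[
\W_1(f_1(t),f_2(t)) \leq \iint_{(\R^3\times\R^3)^2}\!\!|\Z_1(t,x_1,v_1)-\Z_2(t,x_2,v_2)|\,\dd\pi^\init(x_1,v_1,x_2,v_2).
\]

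Next, I would insert the point $\Z_1(t,x_2,v_2)$ and split via the triangle inequality, so that the right-hand side is bounded by $A(t)+B(t)$ with
\[
A(t)=\iint|\Z_1(t,x_1,v_1)-\Z_1(t,x_2,v_2)|\,\dd\pi^\init,\quad B(t)=\iint|\Z_1(t,x_2,v_2)-\Z_2(t,x_2,v_2)|\,\dd\pi^\init.
\]
For $A(t)$, the key step is to show that $\Z_1(t,\cdot,\cdot):\R^3\times\R^3\to\R^3\times\R^3$ is globally Lipschitz with a constant $L(t)$ depending only on $\|u\|_{\mathrm{L}^1([0,T];\W^{1,\infty}(\R^3))}$. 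This follows by differentiating \eqref{eq:EDO-appendix} in the initial condition $(x,v)$: the Jacobian $\nabla_{(x,v)}\Z_1(t)$ satisfies a linear ODE whose coefficients are controlled pointwise by $\|\nabla u(t,\cdot)\|_\infty+1$, and a Grönwall argument supplies the bound. Since $\pi^\init$ is optimal, this yields $A(t)\leq L(t)\,\W_1(f_1^\init,f_2^\init)$.

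For $B(t)$, I would exploit that the second marginal of $\pi^\init$ is $f_2^\init$ to rewrite
\[
B(t)=\iint_{\R^3\times\R^3}|\Z_1(t,x,v)-\Z_2(t,x,v)|\,\dd f_2^\init(x,v),
\]
and then invoke the Cauchy--Schwarz inequality against the finite measure $f_2^\init$ (of total mass $M$) to obtain $B(t)\leq \sqrt{M}\,\sqrt{Q_{\Z_1,\Z_2}(t)}$. Finally, combining the two bounds and squaring using $(a+b)^2\leq 2(a^2+b^2)$ delivers
\[
\W_1(f_1(t),f_2(t))^2 \leq 2L(t)^2\,\W_1(f_1^\init,f_2^\init)^2 + 2M\, Q_{\Z_1,\Z_2}(t),
\]
where the implicit constant depends solely on $\|u\|_{\mathrm{L}^1([0,T];\W^{1,\infty}(\R^3))}$ and on $M$, as claimed. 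The only non-trivial ingredient is the Grönwall Lipschitz bound on $\Z_1(t,\cdot,\cdot)$, which is where the assumed regularity of $u$ enters decisively; the rest is transport-theoretic bookkeeping.
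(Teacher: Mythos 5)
Your proof is correct and is essentially the paper's argument phrased in the primal (coupling) formulation of $\W_1$ rather than the Kantorovich--Rubinstein dual one: your splitting into $A(t)+B(t)$ via the intermediate point $\Z_1(t,x_2,v_2)$ corresponds exactly to the paper's decomposition $I_1+I_2$ of $\iint\varphi\,\dd(f_1(t)-f_2(t))$, with the same two ingredients (the Gr\"onwall Lipschitz bound on $\Z_1(t,\cdot,\cdot)$ coming from $u\in\mathrm{L}^1([0,T];\W^{1,\infty})$, and Cauchy--Schwarz against the finite measure $f_2^\init$ to produce $\sqrt{M\,Q_{\Z_1,\Z_2}(t)}$). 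No gaps; the two presentations are interchangeable by duality.
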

\begin{proof}
Recall the Kantorovitch duality, for two measures $\mu$ and $\nu$ on $\R^3\times\R^3$ of same mass
    \begin{align*}
            \W_1(\mu,\nu)
            =\sup \left\{ \iint_{\R^3\times\R^3} \varphi \,\dd(\mu-\nu)(x,v),\quad \varphi\in \mathscr{C}^{0,1}(\R^3\times\R^3),\quad \|\nabla_{x,v}\varphi\|_{\infty}\leq 1 \right\}.
    \end{align*}
For a given test function $\varphi\in \mathscr{C}^{0,1}(\R^3\times\R^3)$ such that  $\|\nabla_{x,v}\varphi\|_{\infty}\leq 1$, we have for $t\in[0,T]$ 
    \begin{align*}
        \iint_{\R^3\times\R^3} \ffi &\,\dd(f_1(t)-f_2(t))(x,v)   \\&=\iint_{\R^3\times\R^3} \varphi(\Z_1(t,x,v))\dd f_1^\init(x,v)-\iint_{\R^3\times\R^3}\varphi(\Z_2(t,x,v))\, \dd f_2^\init(x,v)\\
                                                      &=\iint_{\R^3\times\R^3} \varphi(\Z_1(t,x,v))\,\dd (f_1^\init-f_2^\init)(x,v) 
                                                   \\
                                                   &\qquad\qquad+ \iint_{\R^3\times\R^3} \left[\varphi(\Z_1(t,x,v))-\varphi(\Z_2(t,x,v))\right]\,\dd f_2^\init(x,v)\\
      &= I_1+I_2.
    \end{align*}
    To estimate $I_1$, we simply note that by \eqref{eq:EDO-appendix}, for all $t\in[0,T]$, the map $\Psi : (x,v) \mapsto\ffi(\Z_1(t,x,v))$ satisfies
    \begin{equation*}
        \|\nabla_{x,v}\Psi\|_{\infty}\leq
        \|\nabla_{x,v}Z_1(t,\cdot,\cdot)\|_{\infty}\|\nabla_{x,v}\varphi\|_{\infty}\leq C\left(T, \|u_1\|_{\mathrm{L}^1([0,T];\W^{1,\infty})}\right).
    \end{equation*}
    
    We have therefore directly $I_1 \lesssim \W_1(f^\init_1,f^\init_2)$. For $I_2$, using that $\ffi$ is lipschitz with a constant less than $1$ together with the fact that $f_2^\init$ is a finite measure to write
   \begin{align*}
       I_2&\leq \iint_{\R^3\times\R^3} |\Z_2(t,x,v)-\Z_1(t,x,v)|\,\dd f_2^\init(x,v)\\
            &\lesssim Q_{\Z_1,\Z_2}(t)^{1/2}.
   \end{align*}
The conclusion follows. $\qedhere$
\end{proof}

  \bibliographystyle{plain}
\bibliography{biblio}

    \end{document}